
\documentclass[10pt]{article}

\usepackage[title,toc,titletoc,page]{appendix}

\usepackage{amsfonts}
\usepackage{amsmath}
\usepackage{amsthm}
\usepackage{amssymb}
\usepackage{mathtools}
\usepackage{multicol}
\usepackage{latexsym}
\usepackage[all]{xy}
\usepackage{xcolor}
\usepackage{multicol}
\usepackage{bm}
\usepackage{extarrows}

\usepackage{ytableau}

\setlength{\oddsidemargin}{0in}
\setlength{\evensidemargin}{0in}
\setlength{\textwidth}{6.5in}
\setlength{\topmargin}{-0.5in}
\setlength{\textheight}{9in}

\newtheorem{theorem}{Theorem}[section]
\newtheorem{lemma}[theorem]{Lemma}
\newtheorem{proposition}[theorem]{Proposition}
\newtheorem{corollary}[theorem]{Corollary}
\newtheorem{definition}[theorem]{Definition}

\theoremstyle{remark}
\newtheorem{remark}[theorem]{Remark}

\newcommand{\eps}{\epsilon}

\newcommand{\id}{\mathrm{id}}

\newcommand{\mbA}{\mathbf{A}}

\newcommand{\mbJ}{\mathbf{J}}

\newcommand{\mbbF}{\mathbb{F}}

\newcommand{\mcA}{\mathcal{A}}

\newcommand{\mcN}{\mathcal{N}}

\newcommand{\mcP}{\mathcal{P}}

\newcommand{\mfB}{\mathfrak{B}}

\newcommand{\mfc}{\mathfrak{c}}

\newcommand{\mfgl}{\mathfrak{g}\mathfrak{l}}
\newcommand{\mfh}{\mathfrak{h}}

\newcommand{\mfp}{\mathfrak{p}}

\newcommand{\mft}{\mathfrak{t}}
\newcommand{\mfU}{\mathfrak{U}}

\newcommand{\msc}{\mathsf{c}}

\newcommand{\mse}{\mathsf{e}}
\newcommand{\msE}{\mathsf{E}}

\newcommand{\msK}{\mathsf{K}}

\newcommand{\msS}{\mathsf{S}}
\newcommand{\mss}{\mathsf{s}}
\newcommand{\mst}{\mathsf{t}}
\newcommand{\msT}{\mathsf{T}}

\newcommand{\msv}{\mathsf{v}}

\newcommand{\C}{\mathbb{C}}

\newcommand{\Z}{\mathbb{Z}}

\newcommand{\ot}{\otimes}

\newcommand{\ad}{\text{ad }}
\DeclareMathOperator{\sgn}{sgn}

\DeclareMathOperator{\ch}{ch}

\DeclareMathOperator{\rank}{rank}

\allowdisplaybreaks

\begin{document}

\setlength{\pdfpagewidth}{8.5in}
\setlength{\pdfpageheight}{11in}

\setlength{\parskip}{1em}

\begin{center}

\Large{\textbf{  Highest Weight Modules Over The Quantum Periplectic Superalgebra of Type $P$}}

Saber Ahmed, Dimitar Grantcharov, Nicolas Guay
\end{center}

\begin{abstract} 
In this paper, we begin the study of highest weight representations of the quantized enveloping superalgebra $\mfU_q\mfp_n$ of type $P$. We introduce a Drinfeld-Jimbo representation and establish a triangular-decomposition of $\mfU_q\mfp_n$. We explain how to relate modules over  $\mfU_q\mfp_n$ to modules over $\mfp_n$, the Lie superalgebra of type $P$, and we prove that the category of tensor modules over $\mfU_q\mfp_n$ is not semisimple.

{\small
\medskip\noindent 2020 MSC: Primary 17B37, 17B10\\
\noindent Keywords and phrases: quantized enveloping superalgebra, periplectic Lie superalgebra, highest weight representation, triangular decomposition, tensor representations, nonsemisimplicity}

\end{abstract}

\section*{Introduction} 

The classification of finite-dimensional simple Lie superalgebras and the foundations of their
representation theory was established by V. Kac in \cite{K1} and
\cite{K2}.  The representation theory of Lie
superalgebras has been known, since its inception, to be  more
complicated than that of  Lie algebras. The Lie superalgebras of types $P$ and $Q$ are especially interesting due to the algebraic, geometric, and combinatorial properties of their representations. The study of the representations of type $P$ Lie superalgebras, which are also called periplectic in the literature, has attracted considerable attention in the last several years. Interesting results on the category $\mathcal O$, the associated periplectic Brauer algebras, and related theories have been established in \cite{AGG}, \cite{BDEA^+1}, \cite{BDEA^+2}, \cite{CP}, \cite{Co}, \cite{CE1}, \cite{CE2}, \cite{DHIN}, \cite{EAS1}, \cite{EAS2}, \cite{HIR}, \cite{IN}, \cite{IRS}, \cite{KT}, \cite{Ser}, among others.

In this paper we initiate the study of highest weight representations of the quantum superalgebra $\mfU_q \mathfrak p_n$. In \cite{AGG} we constructed a flat deformation of the universal enveloping algebra $\mfU \mathfrak p_n$ which is a quantum enveloping superalgebra in the sense of Drinfeld (\cite{Dr}, \S 7). The idea was to apply a suitable modification of the procedure used by Faddeev, Reshetikhin, and Takhtajan in \cite{FRT}  using an element $S$ in $\mbox{End}(\C_q (n|n) ^{\otimes 2})$ that satisfies the quantum Yang-Baxter equation. 

In the present paper, based on the definition of  $\mfU_q \mathfrak p_n$ in \cite{AGG}, we give a presentation of $\mfU_q \mathfrak p_n$ in terms of Drinfeld-Jimbo generators and relations. These  relations are quantum deformations of those obtained in \cite{DKM}. Using this new presentation, we find a natural triangular decomposition of $\mfU_q \mathfrak p_n$, and then introduce the notion of highest weight module. This matches the corresponding result of Moon in \cite{M} for $\mfU \mathfrak p_n$. We also obtain the explicit decomposition of the second and the third tensor power of the natural representation of $\mfU_q \mathfrak p_n$. These decompositions, in particular, imply that the category of tensor representations is not semisimple, which is expected.

The structure of the paper is as follows. We give the notation and basic definitions related to the classical periplectic Lie superalgebra in Section \ref{sec:pn}. In Section \ref{sec:Uqpn}, we present a Drinfeld-Jimbo representation of $\mfU_q\mfp_n$ and prove its triangular decomposition. We introduce standard notation, definitions, and results related to highest weight $\mfU_q\mfp_n$-modules in Section \ref{sec:highest-weight-uqpn}. In Section \ref{sec:classical-limit} we discuss the classical limit and how the highest weight representations of $\mfU_q\mfp_n$ relate to those of $\mfp_n$ (cf. Theorem \ref{thm:classicallimit}). In the last section, we discuss tensor representations of $\mfU_q\mfp_n$ and use particular modules to prove that not every tensor representation of $\mfU_q\mfp_n$ is completely reducible (cf. Theorem \ref{thm:vk-not-reducible}).

\noindent \textbf{Acknowledgements:} The second named author is partly supported by the Simons Collaboration Grant 855678. The third named author gratefully acknowledges the financial support of the Natural Sciences and Engineering Research Council of Canada provided via the Discovery Grant Program. 

\section{The Lie superalgebra $\mfp_n$ and its representations}\label{sec:pn}

By $\mathbb Z_2 = \left\{0,1 \right\}$ we denote the group $\mathbb Z / 2 \mathbb Z$. All Lie superalgebras and homomorphisms are over $\C$ unless otherwise stated.

We will use the same setting as in \cite{AGG}. We will denote by $\C(n|n)$ as the vector superspace $\C^n\oplus \C^n$ spanned by the standard basis vectors $u_{-n},\hdots,u_{-1},u_1,\hdots,u_n$. We say that $u_i$ is odd if $i<0$ and even if $i>0$. Denote the elementary matrices in $M_{n|n}(\C)$, the vector superspace consisting of square $(2n)\times(2n)$-matrices with entries in $\C$ by $E_{ij}$, with $i,j\in \{\pm 1, \pm 2,\hdots, \pm n\}$. Set the parity function $p: \{\pm 1, \pm 2,\hdots, \pm n\} \longrightarrow \Z_2$ to be $p(i) = 0$ if $i>0$ and $p(i)=1$ if $i<0$. We set $\msE_{ij} = E_{ij} - (-1)^{p(i)(p(j)+1)}E_{-j,-i}$ and observe that $\msE_{ij} = - (-1)^{p(i)(p(j)+1)}\msE_{-j,-i}$ for all $i,j\in\{\pm 1, \ldots, \pm n\}$.  Therefore, $\msE_{i,-i} = 0$ when $1 \le i \le n$.

The Lie superalgebra $\mfp_n$ of type $P$ is the subsuperalgebra of $\mfgl(n|n)$ that consists of matrices of the form 
\begin{align*}
\begin{pmatrix}
A & B \\ C & D
\end{pmatrix}
\end{align*}
where $A,B,C,D\in \mfgl(n)$, $D = -A^t$, $B=B^t$, and $C=-C^t$. A basis of $\mfp_n$ is provided by all the matrices $\msE_{ij}$ with indices $i$ and $j$ respecting one of the following series of inequalities: $$1 \le |j| < |i| \le n \text{ or } 1\le i=j \le n \text{ or } -n \le i=-j \le -1.$$

The superbracket on $\mfp_n$ is given by \begin{eqnarray} [\msE_{ji}, \msE_{lk}] 
& = & \delta_{il} \msE_{jk} - (-1)^{(p(i)+p(j))(p(k)+p(l))} \delta_{jk} \msE_{li} \notag \\
& & - \delta_{i,-k}(-1)^{p(l)(p(k)+1)} \msE_{j,-l} - \delta_{-j,l}(-1)^{p(j)(p(i)+1)} \msE_{-i,k}. \label{supbr}
\end{eqnarray}

Throughout the paper $\mathfrak{h}$ will be the Lie subsuperalgebra of $\mathfrak{p}_n$ with basis $\{k_1,\hdots,k_n\}$, where $k_i \coloneqq \msE_{ii}$ for $1\leq i \leq n$. Note that $\mfh$ is purely even, and is also a self-normalizing nilpotent subsuperalgebra of $\mfp_n$, hence a Cartan subsuperalgebra of $\mathfrak{p}_n$. By $\{\epsilon_1,\hdots,\epsilon_n\}$ we denote the basis of $\mfh^*$ dual to $\{k_1,\hdots,k_n\}$.

Set $I \coloneqq \{1,\hdots, n-1\}$. The root system $\Delta$ of $\mfp_n$ relative to $\mathfrak h$ consists of the roots $\epsilon_i-\epsilon_j$ (for $i\neq j$), $\epsilon_i+\epsilon_j$ (for $i< j$), and $-\epsilon_i-\epsilon_j$ (for $i\leq j$). Let $\alpha_{i} = \epsilon_{i} -\epsilon_{i+1}$, $\beta_i = 2\epsilon_{i}$, and $\gamma_i = \epsilon_i + \epsilon_{i+1}$. Set

\begin{multicols}{3}
\noindent
\begin{align*}
e_i &\coloneqq \msE_{-i-1,-i},\\
f_i &\coloneqq \msE_{i+1,i},
\end{align*}
\begin{align*}
e_{\overline{i}} &\coloneqq \msE_{i+1,-i},\\
f_{\overline{i}} &\coloneqq \msE_{-i-1,i},
\end{align*}
\begin{align*}
F_{\overline{j}} &\coloneqq \msE_{-j,j}.
\end{align*}
\end{multicols}

\noindent for $i\in I$ and $j\in I \cup \{ n\}$. The root spaces of $\alpha_i$, $-\alpha_i$, $\gamma$, $-\gamma_i$, and $-\beta_i$ are spanned, respectively, by $e_i, f_i, e_{\overline{i}}, f_{\overline{i}}$, and $F_{\overline{i}}$. Note that $\beta_i \notin \Delta$.

Using the root space decomposition $\mfp_n = \mfh \oplus \left(\bigoplus\limits_{\mu\in\Delta} (\mfp_n)_\mu\right)$ we define the triangular decomposition $\mfp_n = \mfp_n^- \oplus \mfh \oplus \mfp_n^+$ as follows: $\mfp_n^-$ is spanned by $\{f_i, f_{\overline{i}}, F_{\overline{j}} \mid i\in I, j\in I \cup \{ n\}\}$ and $\mfp_n^+$ is spanned by $\{e_i, e_{\overline{i}} \mid i\in I\}$.  Alternatively, $\Delta = \Delta_+ \sqcup \Delta_-$, where \begin{eqnarray*} \Delta_+ =  \Delta(\mfp_n^+) = \{\alpha_i,\gamma_i \mid i\in I\}, & \Delta_- = \Delta(\mfp_n^-) = \{-\alpha_i, -\gamma_i, -\beta_j \mid i\in I, j\in I \cup \{ n\}\}.
\end{eqnarray*}
In this paper, all highest weight modules of $\mfp_n$  will be relative to the Borel subalgebra $\mathfrak  b_n = \mfh \oplus \mfp_n^+$.

The cone of positive roots will be denoted by $\displaystyle Q_+ \coloneqq \sum\limits_{i=1}^{n-1} \Z_{\geq 0}\alpha_i + \sum\limits_{i=1}^{n-1} \Z_{\geq 0}\gamma_i$ and $\displaystyle Q_- \coloneqq -\sum \limits_{i=1}^{n-1} \Z_{\geq 0}\alpha_i - \sum \limits_{i=1}^{n-1} \Z_{\geq 0}\gamma_i - \sum \limits_{i=1}^n \Z_{\geq 0}\beta_i$ denotes the cone of negative roots.  Set $Q= Q_+  + Q_-$.

We will also denote $\displaystyle P \coloneqq \bigoplus\limits_{i=1}^n \Z\epsilon_i$ to be the weight lattice of $\mfp_n$, and denote $P^\vee \coloneqq \bigoplus\limits_{i=1}^n \Z k_i$ to be the coweight lattice.

We next give a presentation of $\mfp_n$ (hence of $\mfU\mfp_n$) in terms of generators and relations. This presentation will be used to define $\mfU_q\mfp_n$ in terms of Drinfeld-Jimbo generators and relations. 

{\allowdisplaybreaks
\begin{proposition}[\cite{DKM}, Definition 3.1.1] \label{prop:pn} 
The complex Lie superalgebra $\mfp_n$ is generated by the elements $e_i$, $e_{\overline{i}}$, $f_i$, $f_{\overline{i}}$ $(i\in I)$, $\mfh$ and $F_{\overline{j}}$ ($j\in I \cup \{ n\}$) subject to following defining relations (for $h\in\mfh$):

\begin{multicols}{2}
\noindent
\begin{align*}
&[\mfh,\mfh] = 0\\
&[h,e_i] = \alpha_i(h)e_i\\
&[h,f_i] = -\alpha_i(h)f_i\\
&[h,e_{\overline{i}}] = \gamma_i(h)e_{\overline{i}}\\
&[h,f_{\overline{i}}] = -\gamma_i(h)f_{\overline{i}}\\
&[h,F_{\overline{i}}] = -\beta_i(h)F_{\overline{i}}\\
&[e_i,e_j] = [f_i,f_j] = 0 \text{ for } |i-j| \neq 1 \\
&[e_i,f_j] = -\delta_{ij}(k_i-k_{i+1})\\
&[e_{\overline{i}},f_{\overline{i}}] = -(k_i-k_{i+1})\\
&[f_{\overline{i}},e_{\overline{j}}] = 0 \text{ if } |i-j| > 1\\
&[f_{\overline{i+1}},e_{\overline{i}}] = [e_{i+1},e_i] \\
&[f_{\overline{i}},e_{\overline{i+1}}] = [f_{i+1},f_i]\\
&[e_{\overline{i+1}},e_i] = [f_{i+1},e_{\overline{i}}]\\
&[f_{\overline{i+1}},f_i] = [e_{i+1},f_{\overline{i}}]\\
&[f_{\overline{i}},f_{i}] = F_{\overline{i}} \\
&[e_i,f_{\overline{i}}] = F_{\overline{i+1}} \\
&[e_{\overline{i}},e_{\overline{j}}] = [f_{\overline{i}},f_{\overline{j}}] = 0 \text{ for } i,j\in I \\
& [f_i,e_{\overline{j}}] = 0 \text{ if } i\neq j+1\\
& [e_{\overline{i}}, e_j] = 0 \text{ if } i\neq j+1 \\
& [e_i, f_{\overline{j}}] = 0 \text{ if } i\neq j,j+1 \\
& [f_{\overline{i}}, f_j] = 0 \text{ if } i\neq j,j+1 \\
&[F_{\overline{j}}, e_i] = -\beta_i(k_j)f_{\overline{i}}\\
&[F_{\overline{j}}, f_i] = \beta_{i+1}(k_j)f_{\overline{i}} \\
&[e_i,[e_i,e_{i\pm1}]] = 0 \\
&[f_i,[f_i,f_{i\pm1}]] = 0 \\
&[e_{\overline{i+1}},[e_{i+1},e_{i}]] = e_{\overline{i}} \\
\end{align*}
\end{multicols}
\end{proposition}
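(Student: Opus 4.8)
The strategy is the one used to establish the Serre presentation of a semisimple Lie algebra: realize the presented object as $\mfp_n$ by means of a dimension estimate. Let $\mfg$ denote the Lie superalgebra generated by the symbols $e_i,e_{\overline{i}},f_i,f_{\overline{i}}$ $(i\in I)$, the space $\mfh$, and $F_{\overline{j}}$ $(j\in I\cup\{n\})$, modulo the listed relations. First I would verify, using \eqref{supbr}, that every defining relation holds among the corresponding elements of $\mfp_n$, and that these elements generate $\mfp_n$ (iterated brackets of the $e$'s produce a root vector for every root $\epsilon_a-\epsilon_b$ and $\epsilon_a+\epsilon_b$ with $a<b$, and iterated brackets of the $f$'s and $F$'s produce root vectors for all the remaining, negative roots). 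This yields a surjective homomorphism $\pi\colon\mfg\twoheadrightarrow\mfp_n$ which is the identity on $\mfh$; since $\dim\mfp_n=2n^2$, it then suffices to prove $\dim\mfg\le 2n^2$.

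Next I would set up a triangular decomposition of $\mfg$. Because $\mfh$ embeds in $\mfg$ through $\pi$ and every generator is an $\operatorname{ad}\mfh$-eigenvector, $\mfg=\bigoplus_{\mu\in Q}\mfg_\mu$ with $\mfg_0\supseteq\mfh$. Let $\mfn^+$ (resp. $\mfn^-$) be the subalgebra generated by the $e$'s (resp. by the $f$'s and $F$'s); using the height homomorphism $Q\to\Z$, $\epsilon_i\mapsto 1$, one checks that $Q_+$ and $Q_-$ are pointed cones, so the weights of $\mfn^+$ lie in $Q_+\setminus\{0\}$, those of $\mfn^-$ in $Q_-\setminus\{0\}$, and $Q_+\cap Q_-=\{0\}$. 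The relations bracketing an $e$-type generator against an $f$- or $F$-type generator — those giving $[e_i,f_j]$, $[e_{\overline{i}},f_{\overline{i}}]$, $[f_{\overline{i}},e_{\overline{j}}]$, $[e_i,f_{\overline{j}}]$, $[f_i,e_{\overline{j}}]$, $[e_{\overline{i}},e_j]$ and $[F_{\overline{j}},e_i]$ — together with the Jacobi identity show that $\mfn^-+\mfh+\mfn^+$ is closed under the bracket, hence equals $\mfg$, and by the weight considerations the sum is direct: $\mfg=\mfn^-\oplus\mfh\oplus\mfn^+$. Since $\pi$ maps $\mfn^+$ onto $\mfp_n^+$ and $\mfn^-$ onto $\mfp_n^-$, and $\dim\mfh+\dim\mfp_n^++\dim\mfp_n^-=n+n(n-1)+n^2=2n^2$, it now remains only to prove the upper bounds $\dim\mfn^+\le n(n-1)$ and $\dim\mfn^-\le n^2$.

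For $\mfn^+$: the subalgebra $\langle e_i:i\in I\rangle$ is a quotient of $\mfn^+(\mfsl_n)$ by the type $A_{n-1}$ Serre relations, hence of dimension at most $\binom n2$. The relation $[e_{\overline{i+1}},[e_{i+1},e_i]]=e_{\overline{i}}$ puts every $e_{\overline{i}}$ in the $\langle e_j\rangle$-submodule $M^+$ of $\mfg$ generated by $e_{\overline{n-1}}$. The relations $[f_i,e_{\overline{n-1}}]=0$ and $[e_{\overline{n-1}},e_i]=0$ for $i\ne n-2$ make $e_{\overline{n-1}}$ a lowest-weight vector for the copy of $\mfgl_n$ spanned by the $e_i,f_i,k_i$, and the relations force the remaining (lowest-weight) Serre relations, for instance $[e_{n-2},[e_{n-2},e_{\overline{n-1}}]]=0$, which follows from $[e_{\overline{n-1}},e_{n-2}]=[f_{n-1},e_{\overline{n-2}}]$, $[e_{n-2},f_{n-1}]=0$ and $[e_{n-2},e_{\overline{n-2}}]=0$. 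Thus $M^+$ is a homomorphic image of the finite-dimensional module $\Lambda^2\C^n$, of dimension $\binom n2$, giving $\dim\mfn^+\le 2\binom n2=n(n-1)$. The bound on $\mfn^-$ is analogous, with $F_{\overline{n}}$ playing the role of a highest-weight vector: $\langle f_i\rangle$ is a quotient of $\mfn^-(\mfsl_n)$, the relations $[F_{\overline{n}},f_{n-1}]=2f_{\overline{n-1}}$ and $[f_{\overline{i}},f_i]=F_{\overline{i}}$ put every $f_{\overline{i}}$ and $F_{\overline{j}}$ in the $\mfgl_n$-submodule $M^-$ generated by $F_{\overline{n}}$, and the relations $[F_{\overline{n}},e_i]=0$, $[F_{\overline{n}},f_i]=2\delta_{i,n-1}f_{\overline{n-1}}$ together with their consequences — e.g. $[f_{n-1},[f_{n-1},[f_{n-1},F_{\overline{n}}]]]=0$, which follows from $[f_{n-1},f_{\overline{n-1}}]=-F_{\overline{n-1}}$ and $[F_{\overline{n-1}},f_{n-1}]=\beta_n(k_{n-1})f_{\overline{n-1}}=0$ — realize $M^-$ as a homomorphic image of the finite-dimensional module $\Sym^2(\C^n)^*$, of dimension $\binom{n+1}2$. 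Hence $\dim\mfn^-\le\binom n2+\binom{n+1}2=n^2$, and combining the bounds gives $\dim\mfg\le n(n-1)+n+n^2=2n^2$, so $\pi$ is an isomorphism.

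The routine but lengthy part of this plan is the verification of the defining relations in $\mfp_n$ and of the closure $[\mfn^-,\mfn^+]\subseteq\mfn^-+\mfh+\mfn^+$. The genuine difficulty — and the reason the relation list is chosen exactly as stated — is the last step: one must confirm that the given relations are strong enough to collapse the cyclic modules $M^+$ and $M^-$ onto the finite-dimensional $\Lambda^2\C^n$ and $\Sym^2(\C^n)^*$, i.e. that no iterated bracket of generators whose weight fails to be a root of $\mfp_n$ survives. I expect this to be the main obstacle; it is handled by a weight-by-weight case analysis of which iterated brackets of the generators can be nonzero, repeatedly using the Jacobi identity to rewrite a troublesome bracket in terms of ones already known to vanish.
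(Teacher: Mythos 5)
You take a genuinely different route from the paper. The paper does not reprove the presentation at all: the proposition is attributed to \cite{DKM}, Definition 3.1.1, and the remark that follows only records the dictionary $k_i\mapsto -H_i$, $e_i\mapsto -F_i$, $f_i\mapsto E_i$, $f_{\overline{i}}\mapsto B_i$, $e_{\overline{i}}\mapsto -C_i$, $F_{\overline{1}}\mapsto -\frac{1}{2}B_{1,1}$ between the two generating sets, citing \cite{DKM}, Lemma 3.2.1 for the check that relations correspond. Your plan is instead a self-contained Serre-style dimension count, and its skeleton is sound: the surjection $\pi\colon\mfg\to\mfp_n$, the triangular decomposition $\mfg=\mfn^-\oplus\mfh\oplus\mfn^+$ (every cross-bracket of a positive with a negative generator is indeed among the listed relations), the correct targets $\dim\mfp_n^+=n(n-1)$ and $\dim\mfp_n^-=n^2$, and the correct identification of the odd cyclic modules: $e_{\overline{n-1}}$ has weight $\epsilon_{n-1}+\epsilon_n$, is killed by all $\mathrm{ad}(f_i)$ and by $\mathrm{ad}(e_i)$ for $i\neq n-2$, and satisfies $\mathrm{ad}(e_{n-2})^2e_{\overline{n-1}}=0$ (this is part (c) of the lemma the paper proves right after the proposition), so $M^+$ is a quotient of $\Lambda^2\C^n$; dually $F_{\overline{n}}$ has weight $-2\epsilon_n$, is killed by all $\mathrm{ad}(e_i)$ and by $\mathrm{ad}(f_i)$, $i\le n-2$, with $\mathrm{ad}(f_{n-1})^3F_{\overline{n}}=0$, so $M^-$ is a quotient of $\Sym^2(\C^n)^*$. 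What each approach buys is clear: the paper's argument is a few lines but rests entirely on \cite{DKM}; yours would make the statement independent of that reference, at the price of the verification below.

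That verification is where the proposal stops short, and it is not a side issue. From $\dim M^+\le\binom{n}{2}$ you infer $\dim\mfn^+\le 2\binom{n}{2}$, but this requires $\mfn^+=\langle e_i\rangle+M^+$, i.e. that $\langle e_i\rangle+M^+$ is closed under the bracket; since $M^+$ is purely odd and $\langle e_i\rangle$ sits in height degree $0$, closure forces $[M^+,M^+]=0$, and likewise the bound $\dim\mfn^-\le n^2$ needs $[M^-,M^-]=0$. Note that even the first-layer input here is only partially listed: $[F_{\overline{i}},F_{\overline{j}}]=0$ and $[F_{\overline{i}},f_{\overline{j}}]=0$ are not defining relations and must themselves be derived (the paper does this in parts (b) and (e) of the same lemma). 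You flag this collapse as ``the main obstacle'' and promise a weight-by-weight Jacobi analysis; such an analysis can be organized (for instance, reduce via the cyclic structure to showing $[M^+,e_{\overline{n-1}}]=0$ and $[M^-,F_{\overline{n}}]=0$ and induct on the raising/lowering words), but it is precisely the substantive content of the presentation theorem --- essentially the computation performed in \cite{DKM} --- and until it, together with the genuinely routine check that all listed relations hold in $\mfp_n$ via \eqref{supbr}, is written out, what you have is a correct strategy with correct bookkeeping rather than a complete proof.
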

}

\begin{remark}
We note that we use a slightly different presentation of $\mfp_n$ in terms of generators and relations than the one used in  Definition 3.1.1 in \cite{DKM}. To define an isomorphism between the two presentations we proceed as follows.
A homomorphism from the presentation in Proposition \ref{prop:pn} to the one in  Definition 3.1.1 in \cite{DKM} can be defined by the following maps:
\begin{align*}
k_i \longmapsto -H_i, &\hspace{20pt} e_i \longmapsto -F_i, \hspace{20pt} f_i \longmapsto E_i,\\ f_{\overline{i}} \longmapsto B_i, &\hspace{20pt} e_{\overline{i}} \longmapsto -C_i, \hspace{20pt} F_{\overline{1}} \longmapsto -\frac{1}{2} B_{1,1}.
\end{align*}
These maps indeed define a homomorphism because $F_{\overline{j}} = [e_{j-1}, f_{\overline{j-1}}] = [ f_{\overline{j}}, f_j]$ and all relations listed in Proposition \ref{prop:pn} follow from the relations in Definition 3.1.1 (see for example Lemma 3.2.1 in \cite{DKM}).
The details are left to the reader.  To define a reverse homomorphism is easier. We note that neither of the sets of generators is minimal, but the larger set of generators used in this paper will serve better our purpose. 
\end{remark}

The following are relations of $\mfp_n$ that can be obtained from the relations in Proposition \ref{prop:pn}.

\begin{lemma} The following relations hold in $\mfp_n$:

\begin{itemize}
\item[(a)] $\displaystyle [F_{\overline{j}}, e_{\overline{i}}] = \begin{cases} 2f_i & \text{ if } j = i\\ 2e_i& \text{ if } j = i +1 \\ 0 & \text{ otherwise} \end{cases}$,
\item[(b)] $\displaystyle  [F_{\overline{j}}, f_{\overline{i}}] = 0$,
\item[(c)] $\displaystyle [e_i,[e_i,e_{\overline{i\pm1}}]] = 0$,
\item[(d)] $\displaystyle [f_i,[f_i,f_{\overline{i\pm1}}]] = 0$,
\item[(e)] $\displaystyle [F_{\overline{i}},F_{\overline{j}}] = 0 \text{ for } i,j\in I \cup \{ n\}$.
\end{itemize}
\end{lemma}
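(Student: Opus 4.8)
The plan is to derive each identity directly from the defining relations in Proposition \ref{prop:pn}, using the key structural facts that $F_{\overline{j}} = [f_{\overline{j}}, f_j]$ (valid for $j \in I$) together with the action relations $[F_{\overline{j}}, e_i] = -\beta_i(k_j) f_{\overline{i}}$ and $[F_{\overline{j}}, f_i] = \beta_{i+1}(k_j) f_{\overline{i}}$, and the super-Jacobi identity throughout. Since $\beta_i(k_j) = 2\epsilon_i(k_j) = 2\delta_{ij}$, these actions are nonzero only in a narrow band of indices, which is what produces the case distinctions. I would treat $j = n$ separately where needed, since $F_{\overline{n}}$ is not of the form $[f_{\overline{j}}, f_j]$ with $j \in I$; however, one can instead write $F_{\overline{n}} = [f_{\overline{n-1}}, f_{n-1}]$ is \emph{not} correct — rather $F_{\overline{n}}$ arises as $[e_{n-1}, f_{\overline{n-1}}] = F_{\overline{n}}$ from the relation $[e_i, f_{\overline{i}}] = F_{\overline{i+1}}$, so I would use that expression to handle the top index.

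For part (a), I would compute $[F_{\overline{j}}, e_{\overline{i}}]$ by writing $e_{\overline{i}} = [e_{\overline{i+1}}, [e_{i+1}, e_i]]$ (the last relation in Proposition \ref{prop:pn}) or, more efficiently, by using $[F_{\overline{j}}, f_i] = \beta_{i+1}(k_j) f_{\overline{i}}$ and $[F_{\overline{j}}, e_i] = -\beta_i(k_j) f_{\overline{i}}$ to move $F_{\overline{j}}$ through the generators; the cleanest route is probably to apply $\ad F_{\overline{j}}$ to the relation $[f_{\overline{i}}, f_i] = F_{\overline{i}}$ for part of it, but for (a) directly I would instead use $e_{\overline{i}}$'s expression via $[f_{\overline{i+1}}, e_{\overline{i}}] = [e_{i+1}, e_i]$ together with the Jacobi identity and the known bracket $[F_{\overline{j}}, F_{\overline{i+1}}]$. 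A more uniform approach: first establish (e), namely $[F_{\overline{i}}, F_{\overline{j}}] = 0$, by writing one $F$ as $[f_{\overline{j}}, f_j]$ and expanding $[F_{\overline{i}}, [f_{\overline{j}}, f_j]]$ with the Jacobi identity, reducing to the already-known actions of $F_{\overline{i}}$ on $f_{\overline{j}}$ (which is part (b)) and on $f_j$ (given in Proposition \ref{prop:pn}); note the apparent circularity between (b) and (e) is broken because (b) for $j \in I$ follows from expanding $F_{\overline{i}}$ itself as a bracket of lowering operators and using that $\mfp_n^-$ is a subalgebra with $[f_{\overline{i}}, f_{\overline{j}}] = 0$, $[f_i, f_{\overline{j}}] = 0$ in the relevant range, while the $j = n$ case of (b) uses $F_{\overline{n}} = [e_{n-1}, f_{\overline{n-1}}]$ and Jacobi.

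Concretely, for (b) I would write (for $j \in I$) $[F_{\overline{i}}, f_{\overline{j}}] = [[f_{\overline{i}}, f_i], f_{\overline{j}}] = [f_{\overline{i}}, [f_i, f_{\overline{j}}]] \pm [[f_{\overline{i}}, f_{\overline{j}}], f_i]$; the second term vanishes since $[f_{\overline{i}}, f_{\overline{j}}] = 0$, and the first vanishes since $[f_i, f_{\overline{j}}] = 0$ unless $j \in \{i-1, i\}$, and in those exceptional cases $[f_i, f_{\overline{i}}] = F_{\overline{i}}$ or $[f_i, f_{\overline{i-1}}]$ is a lowering generator, so $[f_{\overline{i}}, F_{\overline{i}}]$ or similar reduces again to brackets inside $\mfp_n^-$ that are either $0$ or of weight too negative to be nonzero — a short finite check. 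Parts (c) and (d) are Serre-type relations: I would obtain (c) by applying $\ad e_i$ twice to a relation expressing $e_{\overline{i\pm1}}$ in terms of the $e$'s (for instance $[e_{\overline{i+1}}, e_i] = [f_{i+1}, e_{\overline{i}}]$, or the cubic relation $[e_{\overline{i+1}}, [e_{i+1}, e_i]] = e_{\overline{i}}$), repeatedly invoking the classical Serre relations $[e_i, [e_i, e_{i\pm1}]] = 0$ and the commuting relations $[e_i, e_j] = 0$ for $|i-j| \neq 1$; (d) is the mirror image under the $e \leftrightarrow f$, $\overline{\cdot}$ symmetry of the presentation. The main obstacle will be bookkeeping: keeping the super-signs $(-1)^{p(\cdot)p(\cdot)}$ correct through nested Jacobi expansions and correctly tracking which index-coincidence cases survive; there is no conceptual difficulty, but the order of operations matters — I would first prove (b), then (e), then (a), then (c) and (d), so that each proof may cite the previous ones and avoid any genuine circularity.
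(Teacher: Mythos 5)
Your overall mechanism — writing $F_{\overline{j}}=[f_{\overline{j}},f_j]$ for $j\in I$, handling $j=n$ via $F_{\overline{n}}=[e_{n-1},f_{\overline{n-1}}]$, and pushing the super-Jacobi identity through the defining relations of Proposition \ref{prop:pn} — is exactly the paper's mechanism, and your ordering (b) $\to$ (e) $\to$ the rest is sound: your computation of (b) (patched in the self-referential case $j=i$ by a weight/root argument in $\mfp_n$, which is legitimate since the lemma is a statement about $\mfp_n$), your reduction of (e) to (b) together with $[F_{\overline{i}},f_j]=\beta_{j+1}(k_i)f_{\overline{j}}$ and $[f_{\overline{j}},f_{\overline{j}}]=0$, and your one-Jacobi treatment of (c) and (d) (note (d) is not a literal mirror of (c): the term $[f_i,f_{\overline{i}}]=-F_{\overline{i}}$ survives and you must additionally invoke $[F_{\overline{i}},e_{i+1}]=0$) all go through and match what the paper does or leaves to the reader.

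The genuine gap is in part (a), which is the one part with nontrivial right-hand sides and the one the paper proves in detail. The route you finally commit to — using $[f_{\overline{i+1}},e_{\overline{i}}]=[e_{i+1},e_i]$, Jacobi, and ``the known bracket $[F_{\overline{j}},F_{\overline{i+1}}]$'' — does not compute $[F_{\overline{j}},e_{\overline{i}}]$: applying $F_{\overline{j}}$ to that relation (and using (b)) only yields the value of $[f_{\overline{i+1}},\,[F_{\overline{j}},e_{\overline{i}}]]$, so to recover $[F_{\overline{j}},e_{\overline{i}}]$ itself, and in particular the nonzero answers $2f_i$ (for $j=i$) and $2e_i$ (for $j=i+1$), you would need an injectivity or root-space-dimension argument that you never supply; the bracket $[F_{\overline{j}},F_{\overline{i+1}}]=0$ plays no role here. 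Your alternative first suggestion, expanding $e_{\overline{i}}=[e_{\overline{i+1}},[e_{i+1},e_i]]$, silently requires a downward induction on $i$ whose base case $[F_{\overline{j}},e_{\overline{n-1}}]$ (especially $j=n-1,n$) is left unproved. The correct and direct fix is the same move you already use for (b) and (e), and it is what the paper does: expand $F_{\overline{j}}$ itself, namely $[F_{\overline{j}},e_{\overline{i}}]=[[f_{\overline{j}},f_j],e_{\overline{i}}]$ for $j<n$ and $[F_{\overline{n}},e_{\overline{i}}]=[[e_{n-1},f_{\overline{n-1}}],e_{\overline{i}}]$, then apply Jacobi; every resulting inner bracket ($[f_j,e_{\overline{i}}]$, $[e_{\overline{i}},f_{\overline{j}}]$, etc.) is given by a defining relation, and the three surviving cases $j=i$, $j=i+1$, $i=j+1$ produce $2f_i$, $2e_i$, and $0$ after a second application of the relations.
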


\begin{proof} We will prove  (a) and (c). The remaining parts can be deduced similarly.

First, we prove (a) for $j=n$. For every $i$ we have
\begin{align*}
[F_{\overline{n}}, e_{\overline{i}}] &= [[e_{n-1},f_{\overline{n-1}}],e_{\overline{i}}] = [e_{\overline{i}},[e_{n-1},f_{\overline{n-1}}]] = [e_{n-1}, [f_{\overline{n-1}},e_{\overline{i}}]] +  [f_{\overline{n-1}}, [e_{\overline{i}},e_{n-1}]] = [e_{n-1}, [f_{\overline{n-1}},e_{\overline{i}}]].
\end{align*}
If $i = n-2$, then we have that $[e_{n-1}, [f_{\overline{n-1}},e_{\overline{n-2}}]] = [e_{n-1}, [e_{n-1},e_{n-2}]] = 0$. If $i = n-1$, then we have that $$[e_{n-1}, [f_{\overline{n-1}},e_{\overline{n-1}}]] = [e_{n-1}, -k_{n-1}+k_n] = -[e_{n-1},k_{n-1}] + [e_{n-1},k_n] = 2e_{n-1}.$$
Otherwise, we have that $[F_{\overline{n}}, e_{\overline{i}}] = 0$.

Next we prove (a) for $j < n$. Using the relations in Proposition \ref{prop:pn}, we have that:
\begin{align*}
[F_{\overline{j}}, e_{\overline{i}}] &= [[f_{\overline{j}},f_j],e_{\overline{i}}]\\
&= [e_{\overline{i}},[f_{\overline{j}},f_j]]\\
&=   [f_{\overline{j}},[f_j, e_{\overline{i}}]] - [f_j, [e_{\overline{i}}, f_{\overline{j}}]].
\end{align*}
Note that $[F_{\overline{j}}, e_{\overline{i}}] = 0$ from above, unless $|i-j| \leq 2$. So, we need to check the three subcases $i-j = 0,1,-1$.

If $j=i$, then
\begin{align*}
[F_{\overline{i}}, e_{\overline{i}}] &=  [f_{\overline{i}},[f_i, e_{\overline{i}}]] - [f_i, [e_{\overline{i}}, f_{\overline{i}}]]\\
&=  -[f_i,  -k_i + k_{i+1}]\\
 &= 2f_i.
\end{align*}
If $i = j+1$, then
\begin{align*}
[F_{\overline{j}}, e_{\overline{j+1}}]  &= [f_{\overline{j}},[f_j, e_{\overline{j+1}}]] - [f_j, [e_{\overline{j+1}}, f_{\overline{j}}]]\\  &=  - [f_j, [f_{j+1}, f_j]] \\ &=  [f_j, [f_j, f_{j+1}]] + [f_j, [f_{j+1}, f_j]]  \\  &= 0.
\end{align*}

If $j=i+1$, then
\begin{align*}
[F_{\overline{i+1}}, e_{\overline{i}}]  &=   [f_{\overline{i+1}},[f_{i+1}, e_{\overline{i}}]] - [f_{i+1}, [e_{\overline{i}}, f_{\overline{i+1}}]]\\
 &= [f_{\overline{i+1}},[e_{\overline{i+1}}, e_{i}]] - [f_{i+1}, [e_{i+1},e_i]] \\
  &= [e_{\overline{i+1}}, [e_{i},f_{\overline{i+1}}]] - [e_i, [f_{\overline{i+1}},e_{\overline{i+1}}]] +  [e_{i+1}, [e_i,f_{i+1}]] + [e_i, [f_{i+1}, e_{i+1}]]  \\
   &= - [e_i, -k_{i+1} + k_{i+2}] - [e_i, -k_{i+1}+k_{i+2}]\\  
   &= 2e_i.
\end{align*}

Now, we prove (c). Note that $[e_i,e_{\overline{i-1}}] = 0$ for all $2\leq i \leq n$, so $[e_i, [e_i,e_{\overline{i-1}}]] = 0$. Also,
\begin{align*}
[e_i,[e_i,e_{\overline{i+1}}]] &= [e_i,[e_{\overline{i}}, f_{i+1} ]]\\
&= [e_{\overline{i}}, [f_{i+1}, e_i]] + [f_{i+1}, [e_i, e_{\overline{i}}]]\\
&=  0.
\end{align*}
\end{proof}

\section{Quantized enveloping superalgebra $\mfU_q\mfp_n$}\label{sec:Uqpn}

Let $\C(q)$ be the field of rational functions in the variable $q$, and let $\C_q(n|n) = \C(q) \otimes_{\C} \C(n|n)$. Definition 3.6 from \cite{AGG} gives that $\mfU_q\mfp_n$ is defined to be the associative superalgebra over $\C(q)$ generated by elements $t_{ij},t_{ii}^{-1}$ with $1 \le |i|\le |j|\le n$ and $i,j\in \{ \pm 1,\ldots, \pm n  \}$, such that $t_{ii} = t_{-i,-i}$, $t_{-i,i} = 0$ if $i\geq 0$, $t_{ij} = 0$ if $|i| > |j|$, and the following relation is satisfied:
\begin{equation} 
\begin{split}  
& (-1)^{(p(i)+p(j))(p(k)+p(l))} t_{ij}t_{kl} - t_{kl}t_{ij} + \theta(i,j,k) \big( \delta_{|j|<|l|} - \delta_{|k|<|i|} \big) \eps t_{il}t_{kj} \\
& \qquad + (-1)^{(p(i)+p(j))(p(k)+p(l))} \big( \delta_{j>0}(q-1) + \delta_{j<0}(q^{-1}-1) \big) \big(\delta_{jl} + \delta_{j,-l} \big) t_{ij}t_{kl} \\
& \qquad  \qquad  - \big( \delta_{i>0}(q-1) + \delta_{i<0}(q^{-1}-1) \big) \big( \delta_{ik} + \delta_{i,-k} \big) t_{kl}t_{ij} \label{exprel} \\
&  \qquad  \qquad  \qquad + \theta(i,j,k) \delta_{j>0}  \delta_{j,-l}  \eps t_{i,-j}t_{k,-l} - (-1)^{p(j)} \delta_{i<0} \delta_{i,-k}  \eps t_{-k,l}t_{-i,j} \\
 & \qquad  + (-1)^{p(j)(p(i)+1)} \eps \sum_{-n \le a \le n} \big( (-1)^{p(i)p(a)}\theta(i,j,k) \delta_{j,-l} \delta_{|a|<|l|} t_{i,-a}t_{ka} + (-1)^{p(-j)p(a)}\delta_{i,-k} \delta_{|k|<|a|} t_{al}t_{-a,j}\big) \\
& \qquad  = 0,  
\end{split} 
\end{equation}
where $\epsilon = q-q^{-1}$ and $\theta(i,j,k) = \sgn(\sgn(i) +\sgn(j) + \sgn(k))$.

Now, let 
\begin{align}
\begin{split}
q^{k_i} \coloneqq t_{ii}, \hspace{20pt}  e_i \coloneqq \frac{-1}{q-q^{-1}}t_{-i,-i-1}, \hspace{20pt} f_{\overline{i}} \coloneqq  \frac{-1}{q-q^{-1}}t_{i,-i-1}, \\  f_i \coloneqq  \frac{1}{q-q^{-1}}t_{i,i+1}, \hspace{20pt}  e_{\overline{i}} \coloneqq  \frac{1}{q-q^{-1}}t_{-i,i+1}, \hspace{20pt} F_{\overline{i}} \coloneqq \frac{-2}{q-q^{-1}}t_{i,-i}. \label{Uqpngen} 
\end{split}
\end{align}

Using (\ref{Uqpngen}), we have the following relations between the two sets of generators of $\mfU_q\mfp_n$
\begin{equation}\label{qB2}
\begin{aligned}
\displaystyle t_{-i,-i-j} &= -(q-q^{-1})q^{-\sum_{h=1}^{j-1}k_{i+h}}\prod\limits_{h=1}^{j-1} \ad e_{i+h}(e_i), \\
\displaystyle t_{-i,i+j} &= (q-q^{-1})q^{-\sum_{h=1}^{j-1}k_{i+h}}\prod\limits_{h=1}^{j-1} \ad f_{i+h}(e_{\overline{i}}), \\
\displaystyle t_{i,-i-j} &= -(q-q^{-1})q^{-\sum_{h=1}^{j-1}k_{i+h}}\prod\limits_{h=1}^{j-1} \ad e_{i+h}(f_{\overline{i}}), \\
\displaystyle t_{i,i+j} &= (q-q^{-1})q^{-\sum_{h=1}^{j-1}k_{i+h}}\prod\limits_{h=1}^{j-1} \ad f_{i+h}(f_{i}),
\end{aligned} 
\end{equation}
where $\ad a_i(a_j)\coloneqq [a_i,a_j]$, $\displaystyle \prod\limits_{h=1}^{j}\ad a_{i+h}(a_i) \coloneqq \ad a_{i+j} \ad a_{i+j-1} \hdots \ad a_{i+1}(a_i)$, and $\displaystyle \prod\limits_{h=1}^{0}\ad a_{i+h}(a_i) \coloneqq a_i$, for $a_i = e_i, e_{\overline{i}}, f_i, f_{\overline{i}}$.  From \eqref{qB2}, one can  obtain the following relations
\begin{equation}\label{eq-induction-1}
\begin{aligned} 
t_{i,i+j} &= q^{-k_{i+j-1}}(f_{i+j-1}t_{i,i+j-1} - t_{i,i+j-1}f_{i+j-1}),\\
t_{-i,i+j} &= q^{-k_{i+j-1}}(f_{i+j-1}t_{-i,i+j-1} - t_{-i,i+j-1}f_{i+j-1}), \\
t_{i,-i-j} &= q^{-k_{i+j-1}}(e_{i+j-1}t_{i,-i-j+1}-t_{i,-i-j+1}e_{i+j-1}),\\
t_{-i,-i-j} &= q^{-k_{i+j-1}}(e_{i+j-1}t_{-i,-i-j+1}-t_{-i,-i-j+1}e_{i+j-1}).
\end{aligned}
\end{equation}

Equivalently, the above relations can be written as follows
\begin{equation}\label{eq-induction-2}
\begin{aligned} 
t_{ij} &= -q^{-k_{i+1}}(f_{i}t_{i+1,j} - t_{i+1,j}f_{i}),\\
t_{-i,j} &= q^{-k_{i+1}}(e_{i}t_{i+1,j} - t_{i+1,j}e_{i}),\\
\end{aligned}
\end{equation}
where $i>0$. 

The relations  \eqref{eq-induction-1} (respectively, \eqref{eq-induction-2}) can serve as an alternative way to define the generators of $\mfU_q\mfp_n$ inductively. These relations also allows us to obtain the following relations.
\begin{lemma} \label{lem:alg}
The following relations hold in $\mfU_q\mfp_n$ for all $i \in I$.
\begin{itemize}
\item[(a)] $e_if_i - f_ie_i = e_{\overline{i}}f_{\overline{i}} + f_{\overline{i}} e_{\overline{i}}$,
\item[(b)] $\displaystyle \frac{2}{1+q^2}f_{\overline{i+1}}f_{i+1}f_i - f_{\overline{i+1}}f_if_{i+1} - f_{i+1}f_if_{\overline{i+1}} + q^2f_if_{i+1}f_{\overline{i+1}} = q^2q^{2k_{i+1}}f_{\overline{i}} - \frac{1-q^2}{1+q^2}f_{i+1}f_{\overline{i+1}}f_i$,
\item[(c)] $\displaystyle f_ie_i = e_if_i+q^2\frac{q^{2k_i}-q^{k_{i+1}}}{q^2-1} + (q^2-1)e_{\overline{i}}f_{\overline{i}}$.
\end{itemize}
\end{lemma}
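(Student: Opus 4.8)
The plan is to derive all three identities directly from the inductive relations \eqref{eq-induction-1}--\eqref{eq-induction-2} together with the single generating relation \eqref{exprel}, rather than trying to guess the quantum analogues of the classical brackets from Proposition~\ref{prop:pn}. The point is that each of (a), (b), (c) is an identity among the low-index generators $t_{-i,-i-1}$, $t_{i,i+1}$, $t_{-i,i+1}$, $t_{i,-i-1}$, $t_{i,-i}$ and the $t_{jj}^{\pm 1}$, so in principle each follows from finitely many instances of \eqref{exprel} after translating through \eqref{Uqpngen}.

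First I would prove (a). Rewrite everything in terms of the $t$'s: with $\eps = q - q^{-1}$ we have $e_i f_i - f_i e_i = \tfrac{-1}{\eps^2}(t_{-i,-i-1}t_{i,i+1} - t_{i,i+1}t_{-i,-i-1})$ and $e_{\overline i} f_{\overline i} + f_{\overline i} e_{\overline i} = \tfrac{-1}{\eps^2}(t_{-i,i+1}t_{i,-i-1} + t_{i,-i-1}t_{-i,i+1})$, where the sign/parity bookkeeping uses that $u_{-i}$ is odd and $u_i$ even. So (a) reduces to the single quadratic relation obtained by specializing \eqref{exprel} to the index quadruple $(i,j,k,l) = (-i,-i-1,i,i+1)$ (and, if needed, a second specialization for the swapped order). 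The $\delta$'s in \eqref{exprel} collapse most terms, and the surviving ones should be exactly the four monomials appearing in the claimed identity; this is the ``base case'' and is a finite, if fiddly, computation.

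Next, (c) is really a refinement of (a): it expresses the same commutator $f_i e_i - e_i f_i$ but keeps track of the Cartan contribution explicitly. I would obtain it from the same specialization of \eqref{exprel} as in (a), this time \emph{not} immediately discarding the terms proportional to $(q-1)$, $(q^{-1}-1)$ and the sum over $a$, which produce the $q^{2k_i}$, $q^{k_{i+1}}$ and the $(q^2-1)e_{\overline i}f_{\overline i}$ corrections; combining with the Cartan relations $t_{ii} t_{-i,-i-1} = q^{\pm 1} t_{-i,-i-1} t_{ii}$ etc.\ (themselves special cases of \eqref{exprel}) and simplifying the coefficients in $\C(q)$ gives the stated form. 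Part~(b) is the genuinely harder one: it is the quantum deformation of the classical relation $[e_{\overline{i+1}},[e_{i+1},e_i]] = e_{\overline i}$ (equivalently $F_{\overline i} = [f_{\overline i}, f_i]$ after translation), so the left-hand side is a $q$-deformed iterated bracket in $f_i, f_{i+1}, f_{\overline{i+1}}$ and the right-hand side is $f_{\overline i}$ up to an explicit power of $q^{2k_{i+1}}$. Here I would use \eqref{eq-induction-1} (for $j=2$) to rewrite $t_{i,-i-2}$, hence $f_{\overline i}$, in terms of $e_{i+1}$ (or $f$'s via the analogous relation) acting on lower generators, normal-order the resulting cubic expression using the quadratic relations among $f_i, f_{i+1}, f_{\overline{i+1}}$ and their interactions with $q^{k_{i+1}}$ (all instances of \eqref{exprel}), and match coefficients.

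The main obstacle will be (b): controlling the proliferation of terms when normal-ordering a cubic monomial, and in particular verifying that the seemingly ad hoc coefficients $\tfrac{2}{1+q^2}$, $\tfrac{1-q^2}{1+q^2}$, $q^2$ really come out of the computation — these rational-in-$q$ coefficients are the tell-tale sign that one must be careful to use the exact relations \eqref{exprel} and not a naive $q$-bracket. A secondary concern is sign/parity bookkeeping throughout: every reordering of an odd generator past an odd generator carries a sign, and the definitions $e_i = \tfrac{-1}{\eps} t_{-i,-i-1}$ versus $f_i = \tfrac{1}{\eps} t_{i,i+1}$ have opposite normalizations, so one must track these consistently. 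I would organize the proof by first recording, as a preliminary lemma, the complete list of quadratic relations among $\{q^{k_i}, q^{k_{i+1}}, e_i, e_{\overline i}, f_i, f_{\overline i}, e_{i+1}, f_{i+1}, f_{\overline{i+1}}\}$ that are needed, each with a one-line pointer to the relevant specialization of \eqref{exprel}, and then deduce (a), (c), (b) in that order by straightforward (if lengthy) substitution and collection of terms in $\C(q)$.
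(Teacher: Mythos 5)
Your overall route coincides with the paper's: the paper gives no worked proof of Lemma \ref{lem:alg}, asserting only that the identities follow from \eqref{eq-induction-1}--\eqref{eq-induction-2}, hence ultimately from \eqref{exprel} via \eqref{Uqpngen} and \eqref{qB2}, which is exactly the computation you propose to organize (and your decision to avoid guessing $q$-analogues of the classical brackets is sound, since the lemma is needed before the presentation of Proposition \ref{prop:alg} is fully established). However, two concrete points in your plan would break down as stated. For (a), a single specialization of \eqref{exprel} at $(I,J,K,L)=(-i,-i-1,i,i+1)$, even supplemented by the swapped order, is not enough: that instance yields $e_if_i-f_ie_i$ together with Cartan terms $q^{2k_i}-q^{2k_{i+1}}$ and only one ordering of $e_{\overline i}f_{\overline i}$ (these come from the $\delta_{J,-L}$, $\delta_{I,-K}$ and sum-over-$a$ pieces, which your sketch discards). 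To cancel the Cartan contribution and produce the symmetric combination $e_{\overline i}f_{\overline i}+f_{\overline i}e_{\overline i}$ you must also use the specialization for the pair $(t_{-i,i+1},t_{i,-i-1})$, i.e.\ the $q$-anticommutation relation $f_{\overline i}e_{\overline i}+q^2e_{\overline i}f_{\overline i}=-\tfrac{q^2}{q^2-1}\big(q^{2k_i}-q^{2k_{i+1}}\big)$; eliminating the Cartan terms between the two instances gives (a), while keeping them gives (c) (note in passing that the exponent in (c) should be $q^{2k_{i+1}}$, as the $m=1$ case of Lemma \ref{lem:action}(b) confirms).

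For (b), your identification is off by one index: $f_{\overline i}=-\tfrac{1}{q-q^{-1}}t_{i,-i-1}$ is itself a generator, whereas $t_{i,-i-2}$ equals $-(q-q^{-1})q^{-k_{i+1}}\big(e_{i+1}f_{\overline i}-f_{\overline i}e_{i+1}\big)$ by \eqref{eq-induction-1}. The workable version of your plan is to apply \eqref{exprel} to the pair $t_{i+1,-i-2}$, $t_{i,i+2}$, expanding $t_{i,i+2}$ and $t_{i,-i-2}$ by \eqref{eq-induction-1}; in that instance the $\delta_{J,-L}$ term and the sum over $a$ are precisely what produce the $q^{k_{i+1}}f_{\overline i}$ and $F_{\overline{i+1}}f_i$ contributions, after which one eliminates $F_{\overline{i+1}}$ using $q^{-1}f_{\overline{i+1}}f_{i+1}-qf_{i+1}f_{\overline{i+1}}=\tfrac{1+q^2}{2}q^{k_{i+1}}F_{\overline{i+1}}$ and normal-orders with the quadratic relations to match the stated coefficients. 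With these corrections your outline is the intended argument; as written it is a plan rather than a completed proof, and the single-specialization claim in (a) and the $t_{i,-i-2}$ misidentification in (b) are the two places where the computation, if carried out literally as described, would fail.
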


Our first main result is the following presentation of $\mfU_q\mfp_n$.

\begin{proposition} \label{prop:alg} The quantum superalgebra $\mfU_q\mfp_n$ is isomorphic to the unital associative superalgebra over $\C(q)$ generated by the even elements  $q^h$ for $h\in P^{\vee}$, $e_i, f_i$ for $i\in I$, and the odd elements $e_{\overline{i}}, f_{\overline{i}}$, for $i\in I$,  $F_{\overline{i}}$ for $i\in I \cup \{ n\} $, that satisfy the following relations

{\allowdisplaybreaks
\begin{align*}
& q^0 = 1,  \;\; q^{h_1+h_2} = q^{h_1}q^{h_2}  \;\; \text{ for } h_1, h_2\in  P^{\vee}, \\
& q^he_i = q^{\alpha_i(h)}e_iq^{h}, \;\; q^hf_i = q^{-\alpha_i(h)}f_iq^{h} \;\; \text{ for } h\in  P^{\vee},\\
& q^he_{\overline{i}} = q^{\gamma_i(h)}e_{\overline{i}}q^{h}, \;\; q^hf_{\overline{i}} = q^{-\gamma_i(h)},
f_{\overline{i}}q^{h},\;\; q^hF_{\overline{i}} = q^{-\beta_i(h)}F_{\overline{i}}q^{h} \;\; \text{ for } h\in  P^{\vee}, \\ \\
& e_ie_j - e_je_i = 0,  \;\; f_if_j - f_jf_i = 0,  \;\; f_{\overline{i}}f_{\overline{j}} + f_{\overline{j}}f_{\overline{i}}= 0  \;\; \text{ if } |i - j| > 1,\\
  & e_{\overline{i}}e_{\overline{j}}+e_{\overline{j}}e_{\overline{i}} = 0, \;\; F_{\overline{i}}F_{\overline{j}}+F_{\overline{j}}F_{\overline{i}} = 0 \;\; \text{ if } |i - j| > 0, \\
& e_if_j - f_je_i = 0  \;\; \text{ if } j \neq i, i+1,\\
&  e_{i}f_{\overline{j}}-f_{\overline{j}}e_{i} = 0,  \;\; f_{i}f_{\overline{j}}-f_{\overline{j}}f_{i} = 0,  \;\; e_{\overline{i}} f_{\overline{j}}+f_{\overline{j}}e_{\overline{i}} = 0  \;\; \text{  if } |i - j| > 1,\\
&    e_{{i}}e_{\overline{j}}-e_{\overline{j}}e_{{i}} = 0,  \;\; f_{j}e_{\overline{i}}-e_{\overline{i}}f_{j} = 0 \;\; \text{  if } j \neq i+1,     \\ \\
  & F_{\overline{i}}e_j - e_{j}F_{\overline{i}} = 0,  \;\; F_{\overline{i}}f_j - f_{j}F_{\overline{i}} = 0  \;\; \text{ if } i\neq j, j+1,\\
    &  F_{\overline{i}}e_{\overline{j}}+e_{\overline{j}}F_{\overline{i}} = 0,  \;\;  F_{\overline{i}}f_{\overline{j}}+f_{\overline{j}}F_{\overline{i}} = 0  \;\; \text{  if } i \neq j, j+1, \\
   & e_{\overline{i}}^2 = 0,  \;\; f_{\overline{i}}^2 = 0, \;\; F_{\overline{i}}^2 = 0,    \\ \\
& e_{i+1}e_i -  e_ie_{i+1} = e_{\overline{i}}f_{\overline{i+1}} + f_{\overline{i+1}}e_{\overline{i}},  \;\;  f_{i+1}f_i- f_if_{i+1} = f_{\overline{i}}e_{\overline{i+1}} + e_{\overline{i+1}}f_{\overline{i}},\\
 & e_{\overline{i+1}}e_i - e_ie_{\overline{i+1}} = f_{i+1}e_{\overline{i}} - e_{\overline{i}}f_{i+1}, \;\; \displaystyle f_{\overline{i+1}}f_i - f_if_{\overline{i+1}} = e_{i+1}f_{\overline{i}} - f_{\overline{i}}e_{i+1}, \\
 & e_if_i - f_ie_i = -\frac{q^{2k_i} -q^{2k_{i+1}}}{q^2-1} + \frac{q^2-1}{q^2}f_{\overline{i}}e_{\overline{i}},\\
&  f_{\overline{i}}e_{\overline{i}} + q^2e_{\overline{i}}f_{\overline{i}} = -\frac{q^2}{q^2-1}(q^{2k_i} - q^{2k_{i+1}}),\\
 & qe_if_{\overline{i}} - q^{-1}f_{\overline{i}}e_i = \frac{(1+q^2)}{2}q^{k_{i+1}}F_{\overline{i+1}} = \displaystyle q^{-1}f_{\overline{i+1}}f_{i+1} - qf_{i+1}f_{\overline{i+1}},    \\ \\
& qF_{\overline{i+1}}e_i - e_{i}F_{\overline{i+1}} = 0,  \;\; qF_{\overline{i}}f_i - f_{i}F_{\overline{i}} = 0,\\
& F_{\overline{i}}e_i - qe_{i}F_{\overline{i}} = -2f_{\overline{i}}q^{k_i},  \;\; q^{-1}F_{\overline{i+1}}f_i - f_{i}F_{\overline{i+1}} = 2q^{k_{i+1}}f_{\overline{i}},\\
& F_{\overline{i}}e_{\overline{i}} + qe_{\overline{i}}F_{\overline{i}} = 2f_iq^{k_i},  \;\; F_{\overline{i}}f_{\overline{i}} + q^{-1}f_{\overline{i}}F_{\overline{i}} = 0, \\
& F_{\overline{i+1}}e_{\overline{i}} + qe_{\overline{i}}F_{\overline{i+1}} = 2e_iq^{k_{i+1}},  \;\; F_{\overline{i+1}}f_{\overline{i}} + q^{-1}f_{\overline{i}}F_{\overline{i+1}} = 0,     
\\ \\
& q^{-1}e_i^2e_{i+1} - (q+q^{-1})e_ie_{i+1}e_i + qe_{i+1}e_i^2 = 0,\\
 & qe_{i+2}^2e_{i} - (q+q^{-1})e_{i+1}e_ie_{i+1} + q^{-1}e_ie_{i+1}^2 = 0,\\
& qf_i^2f_{i+1} - (q+q^{-1})f_if_{i+1}f_i + q^{-1}f_{i+1}f_i^2 =  0,\\
& q^{-1}f_{i+1}^2f_i - (q+q^{-1})f_{i+1}f_if_{i+1} + qf_if_{i+1}^2 = 0,\\
& q^{-1}e_i^2e_{\overline{i+1}} - (q+q^{-1})e_ie_{\overline{i+1}}e_i + qe_{\overline{i+1}}e_i^2 = 0,\\
& qf_i^2f_{\overline{i+1}} - (q+q^{-1})f_if_{\overline{i+1}}f_i + q^{-1}f_{\overline{i+1}}f_i^2 = 0,\\
& e_{i+1}e_ie_{\overline{i+1}} - e_ie_{i+1}e_{\overline{i+1}} - q^2e_{\overline{i+1}}e_{i+1}e_i +q^{2}e_{\overline{i+1}} e_ie_{i+1}=q^{2k_{i+1}}e_{\overline{i}},    \\ \\
& { 2qq^{k_{i+1}}(f_{i+1}f_{\overline{i}} - f_{\overline{i}}f_{i+1}) = (1-q^{-2})F_{\overline{i+1}}(f_{i+1}f_i - f_if_{i+1})}, \\
& { -2qq^{k_{i+1}}(f_{\overline{i+1}}e_{i} - e_{i}f_{\overline{i+1}}) = (1-q^{-2})F_{\overline{i+1}}(e_{i+1}e_i - e_ie_{i+1})}, \\
& { -2qq^{k_{i+1}}(f_{\overline{i+1}}f_{\overline{i}}  + f_{\overline{i}} f_{\overline{i+1}}) = (1-q^{-2})F_{\overline{i+1}}(f_{\overline{i+1}}f_i - f_if_{\overline{i+1}})}, \\
& { 2qq^{k_{i+1}}(f_{i+1}e_{i}  - e_{i} f_{i+1}) = (1-q^{-2})F_{\overline{i+1}}(e_{\overline{i+1}}e_i - e_ie_{\overline{i+1}})}, \\
\end{align*}
}
\end{proposition}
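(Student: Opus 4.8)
Write $\mcA$ for the unital associative $\C(q)$-superalgebra presented by the generators and relations in Proposition~\ref{prop:alg}, and define a homomorphism $\Phi\colon\mcA\to\mfU_q\mfp_n$ on generators by $q^{h}\mapsto\prod_i t_{ii}^{h_i}$ for $h=\sum_i h_i k_i\in P^{\vee}$, together with the assignments in \eqref{Uqpngen}. The plan is the standard three-step argument: (i) show $\Phi$ is well defined, i.e. that every relation of $\mcA$ holds among the elements \eqref{Uqpngen} of $\mfU_q\mfp_n$; (ii) show $\Phi$ is surjective; (iii) show $\Phi$ is injective, by comparing a spanning set of $\mcA$ with a PBW basis of $\mfU_q\mfp_n$.

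For (i), the relations among the $q^h$ come from the $\mfh$-weights of the $t_{ij}$ and the "diagonal" instances of \eqref{exprel}; several of the mixed relations (those with right-hand side $-\tfrac{q^{2k_i}-q^{2k_{i+1}}}{q^2-1}+\cdots$, the anticommutator relation for $e_{\overline i},f_{\overline i}$, the relation computing $q^{k_{i+1}}F_{\overline{i+1}}$, and the cubic relation with right side $q^{2k_{i+1}}e_{\overline i}$) are restatements or immediate consequences of Lemma~\ref{lem:alg}(a)--(c). Each remaining relation — the vanishing commutators at non-adjacent nodes, the odd nilpotencies $e_{\overline i}^2=f_{\overline i}^2=F_{\overline i}^2=0$, the relations involving the $F_{\overline j}$, the two families of $q$-Serre relations, and the last four cubic relations — is obtained by specializing \eqref{exprel} to suitable index choices and then eliminating the auxiliary $t_{ij}$ via the recursions \eqref{eq-induction-1}--\eqref{eq-induction-2}; since only indices within distance $2$ of one another appear, the case analysis is finite and bounded, though tedious. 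For (ii), the recursions \eqref{qB2} express each off-diagonal $t_{ij}$ as an iterated $q$-commutator of the elements \eqref{Uqpngen}, while $t_{ii}^{\pm1}=q^{\pm k_i}$ lies in the image, so $\Phi$ is onto.

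For (iii), recall from \cite{AGG} that $\mfU_q\mfp_n$ is a flat deformation of $\mfU\mfp_n$, hence admits a PBW-type basis of ordered monomials $\msF^{-}\,q^{h}\,\msF^{+}$, where $\msF^{-}$ runs over ordered monomials in the negative root vectors, $h$ over $P^{\vee}$, and $\msF^{+}$ over ordered monomials in the positive root vectors, the odd root vectors occurring to exponent at most $1$. Introduce the same root vectors inside $\mcA$ via the corresponding iterated-bracket formulas and lift this basis to a subset $B\subset\mcA$. Since $\Phi$ is surjective and carries $B$ onto a basis of $\mfU_q\mfp_n$, it is enough to prove that $B$ spans $\mcA$: then $\Phi$ sends a spanning set bijectively to a basis and is an isomorphism. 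That $B$ spans $\mcA$ is a straightening (Bergman diamond lemma) argument — using the relations, move the negative generators to the left and the positive generators to the right, collect the Cartan part in the middle, reorder within each block, and apply the nilpotency relations, terminating in a $\C(q)$-combination of elements of $B$. As a consistency check one specializes at $q=1$, where the relations of $\mcA$ degenerate to those of Proposition~\ref{prop:pn}, so that the ranks agree in the classical limit.

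The main obstacle is the straightening in step (iii). Because of the symmetric generators $F_{\overline j}$ and the exotic identities of Lemma~\ref{lem:alg}(b) and of the last four relations of Proposition~\ref{prop:alg}, commuting an $F_{\overline j}$ or an odd generator $e_{\overline i},f_{\overline i}$ past another generator produces quadratic and cubic correction terms, so one must fix a monomial order making the rewriting system terminating and check that every overlap ambiguity closes on the given relations; arranging the induction so that it terminates is the technical heart of the proof, the rest being bookkeeping. Alternatively, one can avoid step (iii) entirely by constructing $\Psi\colon\mfU_q\mfp_n\to\mcA$ with $t_{ij}\mapsto$ the right-hand side of \eqref{eq-induction-2} read recursively and $t_{ii}^{\pm1}\mapsto q^{\pm k_i}$, and verifying directly that every instance of \eqref{exprel} holds in $\mcA$; this trades the straightening for a longer but purely mechanical computation and exhibits the two-sided inverse of $\Phi$ on generators.
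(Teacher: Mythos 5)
Your overall skeleton coincides with the paper's: define the abstractly presented algebra, check that the images \eqref{Uqpngen} satisfy its relations (the paper, like you, only indicates that this is a finite case-by-case verification using \eqref{exprel} and Lemma \ref{lem:alg}), and get surjectivity from \eqref{qB2}. The divergence, and the genuine gap, is in the injectivity step. Your primary route rests on two things that are not available as stated: (1) a PBW-type basis of $\mfU_q\mfp_n$ consisting of ordered monomials $\msF^-q^h\msF^+$ in the root vectors of \eqref{Uqpngen}. Flatness from \cite{AGG} gives a basis adapted to the $t_{ij}$-presentation and a classical limit, but the root-vector triangular statement for the new generators is exactly what this paper proves later (Theorem \ref{thm:tridecomp}), by a comultiplication argument that itself uses Proposition \ref{prop:alg}; citing it here would be circular, so you would at least have to redo a classical-limit/$\mbA_1$-lattice argument to get linear independence of the images of your set $B$. (2) The spanning of $\mcA$ by $B$, i.e. the straightening. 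You explicitly defer the choice of monomial order, the termination argument, and the resolution of overlap ambiguities for a relation set that includes the $F_{\overline{j}}$-relations, Lemma \ref{lem:alg}(b)-type identities and the last four cubic relations; for this presentation that confluence check is not routine bookkeeping but precisely the content one would have to supply, so as written the injectivity claim is unproved.

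Your closing "alternative" is in fact the paper's actual proof: one shows that every instance of \eqref{exprel}, with the $t_{ij}$ defined inside the presented algebra via the recursions \eqref{eq-induction-1}--\eqref{eq-induction-2}, already follows from the listed relations, which yields the inverse homomorphism and hence injectivity. The paper organizes this into $26$ cases according to the relative order of $|i|,|j|,|k|,|l|$, and handles each by (double) induction on the differences of the indices, writing out cases 2 and 25 in full and invoking Lemma \ref{lem:alg} (e.g. part (c) in case 21) for the others. Calling this a "purely mechanical computation" and stopping there leaves the hard part undone in that route as well: the inductions have to be set up so that the base cases reduce to specific listed relations (e.g. the $F_{\overline{i+1}}$-relations and the final four relations of Proposition \ref{prop:alg}), and this is where the nonstandard relations earn their place in the presentation. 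So either execute the diamond-lemma argument (order, termination, ambiguities, plus an independent linear-independence argument not relying on Theorem \ref{thm:tridecomp}), or carry out the case/induction verification of \eqref{exprel}; at present the proposal supplies neither.
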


\begin{proof}
Let $U$ be the unital associative superalgebra over $\C(q)$ generated by the elements $e_i, f_i, e_{\overline{i}}, f_{\overline{i}}$ for $i\in I$, $F_{\overline{i}}$ for $i\in I \cup \{ n\}$, and $q^h$ for $h\in P^{\vee}$ with defining relations given in the statement of the proposition above. 

We first note that using  (\ref{Uqpngen}) and particular choices for $i,j,k,l$ in 
 (\ref{exprel})   one can establish on a case-by-case basis all relations in the proposition. Hence, we have an associative superalgebra homomorphism $\psi: U\rightarrow \mfU_q\mfp_n$. The relations in (\ref{qB2}) immediately show that $\psi$ is surjective. 

It remains to show that $\psi$ is injective. For this, we prove that (\ref{exprel}) is obtained from the relations in the statement of the proposition by  considering the following 26 cases:

\begin{multicols}{4}
    \begin{enumerate}
        \item $\vert i \vert = \vert j \vert < \vert k \vert < \vert l \vert$
        \item $\vert k \vert < \vert i \vert = \vert j \vert < \vert l \vert$
        \item $\vert k \vert  < \vert l \vert < \vert i \vert = \vert j \vert $
        \item $\vert k \vert = \vert l \vert < \vert i \vert <  \vert j \vert $
        \item $\vert i \vert < \vert k \vert = \vert l \vert <\vert j \vert$
        \item $\vert i \vert <  \vert j \vert < \vert k \vert = \vert l \vert$
        \item $\vert i \vert = \vert k \vert < \vert j \vert < \vert l \vert$
        \item $\vert i \vert = \vert k \vert <\vert l \vert < \vert j \vert$
        \item $\vert i \vert < \vert k \vert <\vert l \vert = \vert j \vert$
        \item $\vert k \vert < \vert i \vert <\vert l \vert = \vert j \vert$
        \item $\vert k \vert < \vert l \vert =\vert i \vert < \vert j \vert$
        \item $\vert i \vert < \vert j \vert =\vert k \vert < \vert l \vert$
        \item $\vert i \vert < \vert j \vert < \vert k \vert < \vert l \vert$
        \item $\vert i \vert < \vert k \vert <\vert j \vert < \vert l \vert$
        \item $\vert i \vert < \vert k \vert <\vert l \vert < \vert j \vert$
        \item $\vert k \vert < \vert i \vert <\vert j \vert < \vert l \vert$
        \item $\vert k \vert < \vert i \vert <\vert l \vert < \vert j \vert$
        \item $\vert k \vert < \vert l \vert <\vert i \vert < \vert j \vert$
        \item $\vert k \vert = \vert l \vert <\vert i \vert = \vert j \vert$
        \item $\vert i \vert = \vert j \vert < \vert k \vert = \vert l \vert$
        \item $\vert i \vert = \vert k \vert < \vert j \vert = \vert l \vert$
        \item $\vert i \vert = \vert j \vert = \vert k \vert < \vert l \vert$
        \item $\vert k \vert < \vert i \vert = \vert j \vert = \vert l \vert$
        \item $\vert i \vert = \vert k \vert = \vert l \vert < \vert j \vert$
        \item $\vert i \vert < \vert j \vert = \vert k \vert = \vert l \vert$
        \item $\vert i \vert = \vert j \vert = \vert k \vert = \vert l \vert$
    \end{enumerate}
    \end{multicols}
    
The verification in each case uses the relations \eqref{eq-induction-1} and \eqref{eq-induction-2} and appropriate induction. In fact, for some cases, we apply useful  identities that follow from  \eqref{eq-induction-1} and \eqref{eq-induction-2}, see Lemma \ref{lem:alg} below. For example, in Case 21 we use Lemma \ref{lem:alg}(c). For reader's convenience, we write detailed proofs for cases 2 and 25. The remaining cases are established using analogous reasoning. 

\noindent{\it Case 2.} Suppose that $\vert k \vert < \vert i \vert = \vert j \vert < \vert l \vert$ in (\ref{exprel}). We prove that 
\begin{align*}
 (-1)^{(p(i)+p(j))(p(k)+p(\ell))}t_{ij}t_{k\ell} - t_{k\ell}t_{ij} = 0
\end{align*}
is obtained from some of the relations in the proposition applying  induction on  $|\ell| - |i|$ first, and  then induction on $|i| - |k|$. We consider only the case of when $i=-j >0$ and $k,\ell > 0$ as the other cases follow similarly. 

We start with the base case of the first induction, i.e.,  $\ell = i+1$. For the base case of the second induction, we have $i = k+1$. Then
\begin{small}
\begin{align*}
t_{k+1,-k-1}t_{k,k+2} - t_{k,k+2}t_{k+1,-k-1} &= -\frac{(q-q^{-1})^2}{2} \left( F_{\overline{k+1}}q^{-k_{k+1}}(f_{k+1}f_k-f_kf_{k+1})-q^{-k_{k+1}}(f_{k+1}f_k-f_kf_{k+1})F_{\overline{k+1}} \right)\\
&= -\frac{(q-q^{-1})^2}{2}q^{-k_{k+1}}[q^{-2}F_{\overline{k+1}}(f_{k+1}f_k-f_kf_{k+1})-(f_{k+1}f_k-f_kf_{k+1})F_{\overline{k+1}}]\\
&= -\frac{(q-q^{-1})^2}{2}q^{-k_{k+1}}[F_{\overline{k+1}}f_{k+1}f_k-F_{\overline{k+1}}f_kf_{k+1} -f_{k+1}f_kF_{\overline{k+1}}+f_kf_{k+1}F_{\overline{k+1}} \\ &+ (q^{-2}-1)F_{\overline{k+1}}(f_{k+1}f_k-f_kf_{k+1})]\\
&= -\frac{(q-q^{-1})^2}{2}q^{-k_{k+1}}[q^{-1}f_{k+1}F_{\overline{k+1}}f_k-F_{\overline{k+1}}f_kf_{k+1} -f_{k+1}f_kF_{\overline{k+1}}+qf_kF_{\overline{k+1}}f_{k+1}\\ &+ (q^{-2}-1)F_{\overline{k+1}}(f_{k+1}f_k-f_kf_{k+1})]\\
&= -\frac{(q-q^{-1})^2}{2}q^{-k_{k+1}}[f_{k+1}(q^{-1}F_{\overline{k+1}}f_k - f_kF_{\overline{k+1}})-q(q^{-1}F_{\overline{k+1}}f_k - f_kF_{\overline{k+1}})f_{k+1}\\ &+ (q^{-2}-1)F_{\overline{k+1}}(f_{k+1}f_k-f_kf_{k+1})]\\
&= -\frac{(q-q^{-1})^2}{2}q^{-k_{k+1}}[2f_{k+1}q^{k_{k+1}}f_{\overline{k}}-2qq^{k_{k+1}}f_{\overline{k}}f_{k+1} + (q^{-2}-1)F_{\overline{k+1}}(f_{k+1}f_k-f_kf_{k+1})]\\
&= -\frac{(q-q^{-1})^2}{2}q^{-k_{k+1}}[2qq^{k_{k+1}}(f_{k+1}f_{\overline{k}}-f_{\overline{k}}f_{k+1}) + (q^{-2}-1)F_{\overline{k+1}}(f_{k+1}f_k-f_kf_{k+1})]\\
&= -\frac{(q-q^{-1})^2}{2}q^{-k_{k+1}}[(1-q^{-2})F_{\overline{k+1}}(f_{k+1}f_k-f_kf_{k+1}) +  (q^{-2}-1)F_{\overline{k+1}}(f_{k+1}f_k-f_kf_{k+1})]\\
&= 0.
\end{align*}
\end{small}
The induction step for the second induction  ($i - k \geq 2$) is established as follows:
\begin{align*}
t_{i,-i}t_{k,i+1} - t_{k,i+1}t_{i,-i} 
&= -\frac{q-q^{-1}}{2}[F_{\overline{i}}q^{-k_{k+1}}(f_kt_{k+1,i+1}-t_{k+1,i+1}f_k)-t_{k,i+1}F_{\overline{i}}]\\   
&= -\frac{q-q^{-1}}{2}[q^{-k_{k+1}}F_{\overline{i}}(f_kt_{k+1,i+1}-t_{k+1,i+1}f_k)-t_{k,i+1}F_{\overline{i}}]\\    
&= -\frac{q-q^{-1}}{2}[q^{-k_{k+1}}(f_kt_{k+1,i+1}-t_{k+1,i+1}f_k)F_{\overline{i}}-t_{k,i+1}F_{\overline{i}}]\\   
&= -\frac{q-q^{-1}}{2}[t_{k,i+1}F_{\overline{i}}-t_{k,i+1}F_{\overline{i}}]\\    
&= 0.
\end{align*}

For the induction step of the first induction ($\ell - i \geq 2$) we proceed as follows:
\begin{align*}
t_{i,-i}t_{k\ell} - t_{k\ell}t_{i,-i} &= -\frac{q-q^{-1}}{2}[F_{\overline{i}}q^{-k_{\ell-1}}(f_{\ell-1}t_{k,\ell-1}-t_{k,\ell-1}f_{\ell-1})-q^{-k_{\ell-1}}(f_{\ell-1}t_{k,\ell-1}-t_{k,\ell-1}f_{\ell-1})F_{\overline{i}}]\\
&= -\frac{q-q^{-1}}{2}[q^{-k_{\ell-1}}F_{\overline{i}}(f_{\ell-1}t_{k,\ell-1}-t_{k,\ell-1}f_{\ell-1})-q^{-k_{\ell-1}}(f_{\ell-1}t_{k,\ell-1}-t_{k,\ell-1}f_{\ell-1})F_{\overline{i}}]\\
&= -\frac{q-q^{-1}}{2}[q^{-k_{\ell-1}}(f_{\ell-1}t_{k,\ell-1}-t_{k,\ell-1}f_{\ell-1})F_{\overline{i}}-q^{-k_{\ell-1}}(f_{\ell-1}t_{k,\ell-1}-t_{k,\ell-1}f_{\ell-1})F_{\overline{i}}]\\
&= 0
\end{align*}

\noindent{\it Case 25.} Suppose that $\vert i \vert < \vert j \vert = \vert k \vert = \vert l \vert$ in (\ref{exprel}). We prove that 
\begin{align*}
0 =(-1)^{(p(i)+p(j))(p(j)+p(k))}q^{\text{sgn}(j)}t_{ij}t_{kj} - t_{kj}t_{ij}
\end{align*}
for $j=\ell$, and
\begin{align*}
0=(-1)^{(p(i)+p(j))(p(j)+p(k))}q^{\text{sgn}(j)}t_{ij}t_{k,-j} - t_{k,-j}t_{ij} + (-1)^{p(i)}\delta_{j>0}(q-q^{-1})t_{i,-j}t_{kj}
\end{align*}
for $j=-\ell$, using some of the relations in the proposition. We proceed by  induction on $|j| - |i|$ and  consider only the case $i > 0$ and $j=k > 0$ as the other cases follow similarly. 

For the base case $j = i + 1$, the relations
\begin{align*}
qt_{i,i+1}t_{i+1,i+1} = t_{i+1,i+1}t_{i,i+1}
\end{align*}
when $j=\ell$, and 
\begin{align*}
qt_{i,i+1}t_{i+1,-i-1} - t_{i+1,-i-1}t_{i,i+1} = -(q-q^{-1})t_{i,-i-1}t_{i+1,i+1}
\end{align*}
when $j=-\ell$, follow from the relations $q^{k_{i+1}}f_{i} = qf_{i}q^{k_{i+1}}$ and $f_iF_{\overline{i+1}} - q^{-1}F_{\overline{i+1}}f_i=-2q^{k_{i+1}}f_{\overline{i}} $.  For the induction step ($j - i \geq 2$) we have:
\begin{align*}
qt_{ij}t_{jj} 
&= qq^{-k_{i+1}}(f_{i}t_{i+1,j}-t_{i+1,j}f_i)q^{k_j}\\
&= q^{k_j}q^{-k_{i+1}}(f_{i}t_{i+1,j}-t_{i+1,j}f_i)\\
&= t_{jj}t_{ij}
\end{align*}
for $j=\ell$, and
\begin{align*}
qt_{ij}t_{j,-j} &= -q\frac{q-q^{-1}}{2}q^{-k_{i+1}}(f_{i}t_{i+1,j}-t_{i+1,j}f_i)F_{\overline{j}}\\
&= -q\frac{q-q^{-1}}{2}q^{-k_{i+1}}f_{i}t_{i+1,j}F_{\overline{j}}+q\frac{q-q^{-1}}{2}q^{-k_{i+1}}t_{i+1,j}f_iF_{\overline{j}}\\
&= -\frac{q-q^{-1}}{2}q^{-k_{i+1}}f_{i}(F_{\overline{j}}t_{i+1,j}+2t_{i+1,-j}q^{k_j})+\frac{q-q^{-1}}{2}q^{-k_{i+1}}(F_{\overline{j}}t_{i+1,j}+2t_{i+1,-j}q^{k_j})f_i\\
&= \frac{q-q^{-1}}{2}q^{-k_{i+1}}F_{\overline{j}}(f_it_{i+1,j}-t_{i+1,j}f_i) -(q-q^{-1})q^{-k_{i+1}}(f_it_{i+1,-j}-t_{i+1,-j}f_i )q^{k_j}\\
&= \frac{q-q^{-1}}{2}F_{\overline{j}}q^{-k_{i+1}}(f_it_{i+1,j}-t_{i+1,j}f_i) -(q-q^{-1})q^{-k_{i+1}}(f_it_{i+1,-j}-t_{i+1,-j}f_i )q^{k_j}\\
&= t_{j,-j}t_{ij} -(q-q^{-1})t_{i,-j}t_{jj}
\end{align*}
for $j=-\ell$.    \end{proof}

We define a standard grading onto $\mathfrak{U}_q\mathfrak{p}_n$ , namely we let $\deg e_i = \alpha_i$, $\deg f_i = -\alpha_i$, $\deg q^h = 0$, $\deg e_{\overline{i}} = \gamma_i$, $\deg f_{\overline{i}} = -\gamma_i$, and $\deg F_{\overline{i}} = -\beta_i$. With this grading, all of the defining relations of the quantum superalgebra $\mfU_q\mfp_n$ are homogeneous. Hence, we say that $\mathfrak{U}_q\mathfrak{p}_n$ have a $Q$-grading 
\begin{align*}
\mfU_q\mfp_n = \bigoplus_{\alpha\in Q} (\mfU_q)_\alpha,
\end{align*}
where $(\mfU_q)_\alpha = \left \{ v\in \mfU_q\mfp_n \mid q^hvq^{-h} = q^{\alpha(h)}v \text{ for all } h\in P^\vee  \right\}$. In what follows we write $\deg u = \alpha$ whenever $u \in (\mfU_q)_\alpha$.

The comultiplication $\Delta$ of $\mfU_q\mfp_n$ is given in \cite{AGG} by the formula
\begin{align*}
\Delta(t_{ij})=\sum_{k=-n}^n (-1)^{(p(i) + p(k))(p(k)+p(j))} t_{ik} \ot t_{kj}.
\end{align*}

Through direct computations, we can express the comultiplication $\Delta$ in terms of the new generators in Proposition \ref{prop:alg}. The details are left to the reader.

\begin{lemma} \label{lem:comult} In terms of the generators $e_i, f_i, e_{\overline{i}}, f_{\overline{i}}$ for $i\in I$, $F_{\overline{i}}$ for $i\in I \cup \{ n\}$, and $q^h$ for $h\in P^{\vee}$, the following hold in $\mfU_q\mfp_n$:
\begin{align*}
\Delta(q^h) &= q^h \ot q^h,\\
\Delta(e_i) &=  q^{k_i}\ot e_i + e_i \ot q^{k_{i+1}}  - \frac{q-q^{-1}}{2}e_{\overline{i}}\ot F_{\overline{i+1}}, \\
\Delta(f_i) &=   q^{k_i} \ot f_i + f_i \ot q^{k_{i+1}} + \frac{q-q^{-1}}{2} F_{\overline{i}}  \ot e_{\overline{i+1}},\\
\Delta(e_{\overline{i}}) &=   q^{k_i}  \ot e_{\overline{i}} + e_{\overline{i}} \ot q^{k_{i+1}},  \\
\Delta(f_{\overline{i}}) &=  q^{k_i} \ot f_{\overline{i}} + f_{\overline{i}} \ot q^{k_{i+1}} - \frac{q-q^{-1}}{2} F_{\overline{i}} \ot e_i + \frac{q-q^{-1}}{2}f_i \ot F_{\overline{i+1}}, \\
\Delta(F_{\overline{i}}) &=   q^{k_i}  \ot F_{\overline{i}} + F_{\overline{i}} \ot q^{k_{i}}.  \\
\end{align*}
\end{lemma}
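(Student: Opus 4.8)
The plan is to compute $\Delta$ directly on each of the elements $t_{ij}$ that occur on the right-hand sides of \eqref{Uqpngen} --- namely $t_{ii}$, $t_{i,i+1}$, $t_{-i,-i-1}$, $t_{-i,i+1}$, $t_{i,-i-1}$ (for $i>0$) and $t_{i,-i}$ (for $1\le i\le n$) --- by plugging them into the coproduct formula $\Delta(t_{ij})=\sum_{k=-n}^{n}(-1)^{(p(i)+p(k))(p(k)+p(j))}t_{ik}\ot t_{kj}$ recalled above, and then rewriting the result back in terms of $e_i,f_i,e_{\overline i},f_{\overline i},F_{\overline i},q^h$ via \eqref{Uqpngen} a second time. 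The essential simplification is that for each such $t_{ij}$ the sum over $k$ collapses to at most four surviving summands: since $t_{kl}=0$ whenever $|k|>|l|$, the factor $t_{ik}$ forces $|k|\ge|i|$ and the factor $t_{kj}$ forces $|k|\le|j|$, so $k$ is confined to the (at most four) indices with $|i|\le|k|\le|j|$, and the further identities $t_{-i,-i}=t_{ii}$ and $t_{-i,i}=0$ (for $i\ge0$) eliminate still more of these.

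I would carry this out in the following order. First, $\Delta(q^{k_i})=\Delta(t_{ii})$: here $|k|=|i|$, and the $k=-i$ summand vanishes because $t_{-i,i}=0$, leaving $\Delta(t_{ii})=t_{ii}\ot t_{ii}$; since each $q^h$ is a monomial in the $q^{\pm k_i}=t_{ii}^{\pm1}$ and $\Delta$ is an algebra homomorphism, this yields $\Delta(q^h)=q^h\ot q^h$ for every $h\in P^\vee$. Next, $\Delta(t_{i,-i})$: here $|k|=|i|$, both $k=\pm i$ contribute, and the super-sign is $+1$ in each, giving $\Delta(F_{\overline i})=q^{k_i}\ot F_{\overline i}+F_{\overline i}\ot q^{k_i}$ (the same computation covers $i=n$). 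Finally each of $t_{i,i+1}$, $t_{-i,-i-1}$, $t_{-i,i+1}$, $t_{i,-i-1}$: in each of these exactly three summands survive --- $k$ runs over $\{i,-i,i+1,-(i+1)\}$ with one member killed by $t_{-i,i}=0$ or by $t_{-(i+1),i+1}=0$ --- and after substituting \eqref{Uqpngen} and cancelling the common factor $q-q^{-1}$ one reads off the stated expressions for $\Delta(f_i)$, $\Delta(e_i)$, $\Delta(e_{\overline i})$, $\Delta(f_{\overline i})$ respectively. For the sign bookkeeping it is useful to record that $(-1)^{(p(i)+p(k))(p(k)+p(j))}=(-1)^{p(i)p(j)}(-1)^{p(k)(p(i)+p(j)+1)}$, so this sign is identically $+1$ whenever the two outer subscripts of the $t_{ij}$ in question have opposite parity (the $e_{\overline i}$, $f_{\overline i}$, $F_{\overline i}$ cases), equals $(-1)^{p(k)}$ in the $f_i$ computation, and equals $-(-1)^{p(k)}$ in the $e_i$ computation.

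There is no conceptual obstacle here; the argument is entirely mechanical. The only point demanding real attention is keeping the signs and scalars straight: evaluating the quadratic super-sign for each surviving $k$, and correctly matching the three different normalizations $-1/(q-q^{-1})$, $1/(q-q^{-1})$, $-2/(q-q^{-1})$ that \eqref{Uqpngen} attaches to the various generators --- in particular the factors of $\tfrac12$ appearing in $\Delta(e_i)$, $\Delta(f_i)$ and $\Delta(f_{\overline i})$ come precisely from the $-2/(q-q^{-1})$ in the definition of some $F_{\overline j}$ meeting a $\pm1/(q-q^{-1})$ from another generator. In a fully written-out proof I would display the computation of $\Delta(F_{\overline i})$ in detail as the model case and then indicate the essentially identical modifications for the other five.
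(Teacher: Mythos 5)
Your approach --- expanding $\Delta(t_{ij})=\sum_{k}(-1)^{(p(i)+p(k))(p(k)+p(j))}t_{ik}\ot t_{kj}$ for the six families of $t$'s appearing in \eqref{Uqpngen}, using $t_{kl}=0$ for $|k|>|l|$ together with $t_{-i,i}=0$ to collapse the sum to the indices $|i|\le|k|\le|j|$, and then rewriting via \eqref{Uqpngen} --- is exactly the direct computation the paper has in mind (it leaves all details to the reader), and your sign identity $(-1)^{(p(i)+p(k))(p(k)+p(j))}=(-1)^{p(i)p(j)}(-1)^{p(k)(p(i)+p(j)+1)}$ is correct and organizes the bookkeeping well. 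Two concrete corrections, though. First, your claim that ``exactly three summands survive'' holds only for $t_{i,i+1}$ and $t_{-i,-i-1}$: for $t_{-i,i+1}$ both the $k=i$ and $k=-(i+1)$ terms vanish (so two survive, matching the two-term $\Delta(e_{\overline i})$), while for $t_{i,-i-1}$ none vanish (all four survive, matching the four-term $\Delta(f_{\overline i})$). Second, carrying out your computation for $t_{i,i+1}$ yields $\Delta(f_i)=q^{k_i}\ot f_i+f_i\ot q^{k_{i+1}}+\frac{q-q^{-1}}{2}\,F_{\overline i}\ot e_{\overline i}$, i.e.\ the last tensor factor is $e_{\overline i}$, not the printed $e_{\overline{i+1}}$; this is forced by the $Q$-grading, since $\Delta$ preserves total degree and $\deg F_{\overline i}+\deg e_{\overline i}=-2\epsilon_i+(\epsilon_i+\epsilon_{i+1})=-\alpha_i=\deg f_i$, whereas $\deg F_{\overline i}+\deg e_{\overline{i+1}}\neq-\alpha_i$. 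So the index in the printed $\Delta(f_i)$ is a typo in the statement rather than a flaw in your method, but you cannot literally ``read off the stated expression'' in that one case; the other five formulas come out of your computation exactly as stated.
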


Let $\mfU_q^+$ (respectively $\mfU_q^-$) be the subsuperalgebra of $\mfU_q\mfp_n$ generated by the elements $e_i$ and $e_{\overline{i}}$ (respectively $f_i$, $f_{\overline{i}}$, and $F_{\overline{j}}$) for $i \in I$ (and $j\in I \cup \{ n\}$). Also, let $\mfU_q^0$ be the subsuperalgebra of $\mfU_q\mfp_n$ generated by $q^h$ for $h\in P^\vee$. In order to establish the  triangular decomposition of $\mfU_q\mfp_n$ we need the following lemma.

\begin{lemma} \label{lem:tridecomp}
Let $\mathfrak{U}_q^{\geq 0}$ (respectively, $\mathfrak{U}_q^{\leq 0}$) be generated by $\mathfrak{U}_q^0$ and $\mathfrak{U}_q^+ $ (respectively,  $\mathfrak{U}_q^0$ and $\mathfrak{U}_q^-$). Then the following $\C(q)$-linear isomorphisms hold.
\begin{align*}
\mathfrak{U}_q^{\geq 0} \cong \mathfrak{U}_q^0 \otimes \mathfrak{U}_q^+, \hspace{30pt} \mathfrak{U}_q^{\leq 0} \cong \mathfrak{U}_q^- \otimes \mathfrak{U}_q^0.
\end{align*}

\end{lemma}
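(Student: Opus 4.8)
The plan is to show that the multiplication maps $\mathfrak{U}_q^0 \otimes \mathfrak{U}_q^+ \to \mathfrak{U}_q^{\geq 0}$ and $\mathfrak{U}_q^- \otimes \mathfrak{U}_q^0 \to \mathfrak{U}_q^{\leq 0}$ are bijections of $\C(q)$-vector spaces; by symmetry (there is an obvious anti-automorphism swapping the $e$'s and $f$'s up to the $F_{\overline{j}}$'s, or one can simply argue the two cases in parallel) it suffices to treat $\mathfrak{U}_q^{\geq 0}$. Surjectivity is automatic since $\mathfrak{U}_q^{\geq 0}$ is by definition generated by $\mathfrak{U}_q^0$ and $\mathfrak{U}_q^+$, so the content is that the multiplication map is injective, equivalently that every element of $\mathfrak{U}_q^{\geq 0}$ can be written \emph{uniquely} as a sum $\sum_\mu h_\mu u_\mu$ with $h_\mu \in \mathfrak{U}_q^0$ and $u_\mu$ running over a fixed $\C(q)$-basis of $\mathfrak{U}_q^+$.

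First I would establish the \emph{spanning} (i.e.\ that multiplication $\mathfrak{U}_q^0 \otimes \mathfrak{U}_q^+ \to \mathfrak{U}_q^{\geq 0}$ is onto, with the $\mathfrak{U}_q^0$-factor on the left): this is a normal-ordering / straightening argument. Starting from an arbitrary word in the generators $q^h, e_i, e_{\overline{i}}$, I repeatedly use the commutation relations $q^h e_i = q^{\alpha_i(h)} e_i q^h$ and $q^h e_{\overline{i}} = q^{\gamma_i(h)} e_{\overline{i}} q^h$ from Proposition~\ref{prop:alg} to move all the $q^h$'s to the left; since these relations only rescale by a unit of $\C(q)$ and do not introduce new generators, the process terminates and yields an element of $\mathfrak{U}_q^0 \cdot \mathfrak{U}_q^+$. (Here one uses that $\mathfrak{U}_q^{\geq 0}$ is spanned by such words, and that $\mathfrak{U}_q^+$ and $\mathfrak{U}_q^0$ are each closed under multiplication.) The analogous argument for $\mathfrak{U}_q^{\leq 0}$ uses $q^h f_i = q^{-\alpha_i(h)} f_i q^h$, $q^h f_{\overline{i}} = q^{-\gamma_i(h)} f_{\overline{i}} q^h$, $q^h F_{\overline{i}} = q^{-\beta_i(h)} F_{\overline{i}} q^h$ to move the $q^h$'s to the \emph{right}, landing in $\mathfrak{U}_q^- \cdot \mathfrak{U}_q^0$.

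For injectivity I would use the grading. Recall $\mathfrak{U}_q\mfp_n = \bigoplus_{\alpha \in Q} (\mfU_q)_\alpha$, and observe that $\mathfrak{U}_q^0 \subseteq (\mfU_q)_0$ while $\mathfrak{U}_q^+$ is graded with $(\mathfrak{U}_q^+)_\alpha \neq 0$ only for $\alpha \in Q_+$. Suppose $\sum_i h_i u_i = 0$ with $h_i \in \mathfrak{U}_q^0$ and the $u_i$ homogeneous of distinct degrees $\alpha_i \in Q_+$; comparing $Q$-homogeneous components we reduce to the case where all $u_i$ have the same degree $\alpha$, and then choose for each $\alpha$ a $\C(q)$-basis $\{u_\alpha^{(j)}\}$ of $(\mathfrak{U}_q^+)_\alpha$. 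Thus the claim reduces to: the set $\{\, u \otimes q^h : u \text{ in a fixed homogeneous basis of } \mathfrak{U}_q^+,\ h \in P^\vee \,\}$ maps to a linearly independent set in $\mathfrak{U}_q\mfp_n$ — i.e., $\mathfrak{U}_q^0$ acts ``freely'' against $\mathfrak{U}_q^+$. The cleanest way to get this is to exhibit a faithful-enough module or a PBW-type basis: since $\mfU_q\mfp_n$ is defined in \cite{AGG} via an $R$-matrix (FRT-type) construction and is a flat deformation of $\mfU\mfp_n$, it has a $\C(q)$-basis compatible with the triangular decomposition; concretely, one invokes the PBW theorem for $\mfU_q\mfp_n$ — realized e.g.\ by the (quantum analogue of the) monomials in the $t_{ij}$ for the various index-ranges, which under \eqref{Uqpngen}, \eqref{qB2} correspond to ordered monomials in $e_i, e_{\overline{i}}$ (for $\mathfrak{U}_q^+$), in $q^h$ (for $\mathfrak{U}_q^0$), and in $f_i, f_{\overline{i}}, F_{\overline{j}}$ (for $\mathfrak{U}_q^-$) — and reads off that products $h \cdot u$ over a basis $\{u\}$ of $\mathfrak{U}_q^+$ and $\{h = q^{\sum a_i k_i}\}$ of $\mathfrak{U}_q^0$ are linearly independent. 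The $\mathfrak{U}_q^{\leq 0}$ statement is identical with the roles reversed.

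The main obstacle is precisely this last linear-independence step: the surjectivity and the grading bookkeeping are routine, but pinning down that $\mathfrak{U}_q^0$ and $\mathfrak{U}_q^+$ are ``independent'' inside $\mfU_q\mfp_n$ genuinely requires an external input — either the explicit PBW/basis theorem for $\mfU_q\mfp_n$ from the FRT construction of \cite{AGG}, or a faithful module on which the $q^h$ act by linearly independent operators on each weight space — rather than following formally from the presentation in Proposition~\ref{prop:alg} alone. I would therefore make clear which version of that input I am quoting (the flat-deformation / PBW property established in \cite{AGG}), and then the rest of the proof is the straightening argument plus the degree decomposition sketched above.
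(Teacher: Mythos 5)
Your straightening argument for surjectivity and the reduction via the $Q$-grading to a fixed degree are exactly what the paper does (and are indeed routine). The problem is the step you yourself flag as the main obstacle: you do not prove the key linear independence of the products $f_\zeta q^h$ (resp.\ $q^h e_\tau$), but defer it to ``the PBW/basis theorem for $\mfU_q\mfp_n$ from the FRT construction of \cite{AGG}'' or to an unspecified faithful module. That input is not available in the form you need: a basis compatible with the triangular decomposition in the Drinfeld--Jimbo generators is precisely what this lemma and Theorem \ref{thm:tridecomp} are setting out to establish, and \cite{AGG} works with the $t_{ij}$'s, whose expressions \eqref{qB2} mix $q^{-k}$'s with $e$'s and $f$'s (e.g.\ $t_{i,-i-j}$ involves both $e$'s and $f_{\overline{i}}$), so one cannot simply ``read off'' ordered monomials $f_\zeta q^h e_\tau$ from an ordered-monomial statement in the $t_{ij}$'s. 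Nor does flatness alone do the job: the classical limit sends every $q^h$ to $1$, so specializing at $q=1$ cannot separate terms $f_\zeta q^h$ with the same $\zeta$ and different $h$, which is exactly the separation you need. So as written the proposal has a genuine gap at its central step.

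The paper closes this gap with a self-contained Hopf-theoretic argument that needs no external basis theorem: choose any basis $\{f_\zeta\}$ of $\mfU_q^-$ consisting of monomials in the $f_i, f_{\overline{i}}, F_{\overline{j}}$ (this is just a basis of a subalgebra, not a PBW statement), suppose $\sum_{\zeta,h} C_{\zeta,h} f_\zeta q^h = 0$, reduce by the $Q$-grading to a fixed degree $\alpha$, and apply the comultiplication of Lemma \ref{lem:comult}. Since each $\Delta(f_\zeta)$ has the form $f_\zeta \otimes q^{h_\alpha} + \cdots + q^{h'_\alpha}\otimes f_\zeta$ with $h_\alpha$ depending only on $\alpha$, collecting the bidegree $(\alpha,0)$ component of the relation gives $\sum_{\zeta,h} C_{\zeta,h}\, f_\zeta q^{h}\otimes q^{h+h_\alpha} = 0$; linear independence of the elements $q^{h+h_\alpha}$ in the second tensor factor separates the different $h$'s, and then, after cancelling the invertible $q^h$, linear independence of the $f_\zeta$ forces all $C_{\zeta,h}=0$. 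If you want to salvage your route, you would have to actually prove the quantum PBW-type statement you are quoting (or construct the faithful module), which is substantially more work than the coproduct trick above.
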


\begin{proof}
We will prove the second isomorphism. Let $\{f_\zeta \mid \zeta \in \Omega\}$ be a basis of $\mfU_q^-$ consisting of monomials in $f_i$'s, $f_{\overline{i}}$'s, and $F_{\overline{j}}$'s ($1\leq i \leq n-1$, $1\leq j \leq n$), with $\Omega$ being an index set. Consider the map $\varphi: \mathfrak{U}_q^- \otimes \mathfrak{U}_q^0 \rightarrow \mathfrak{U}_q^{\leq 0} $ defined by $\varphi(f_\zeta \otimes q^h) = f_\zeta q^h$. The defining relations of $\mfU_q\mfp_n$ imply that $f_\zeta q^h$ span $\mathfrak{U}_q^{\leq 0} $, so $\varphi$ is surjective. It remains to show that the set $\{f_\zeta q^h \mid \zeta \in \Omega, h\in P^\vee\}$ is linearly independent over $\C(q)$.

Suppose 
\begin{align*}
\sum_{\substack{\zeta\in\Omega \\ h\in P^\vee}} C_{\zeta, h} f_\zeta q^h = 0,
\end{align*}
for some  $C_{\zeta, h} \in \C(q)$.  Then
\begin{align*}
\sum_{\alpha\in Q_-} \left(\sum_{\substack{\deg f_\zeta = \alpha \\ h\in P^\vee}} C_{\zeta, h} f_\zeta q^h\right) = 0.
\end{align*}
Write  $\displaystyle \alpha = -\sum_{i=1}^{n-1}(m_i\alpha_i+n_i\gamma_i) - \sum_{i=1}^{n}r_i\beta_i$, for $m_i,n_i, r_i\in \mathbb{Z}_{\geq 0}$, and let $\displaystyle h_\alpha = \sum_{i=1}^{n-1}(m_i+n_i)k_{i+1} + \sum_{i=1}^{n}r_ik_{i}$ and  $\displaystyle h_\alpha^\prime = \sum_{i=1}^{n-1}(m_i+n_i)k_{i} +r_ik_{i}$. From $\displaystyle \mfU_q\mfp_n = \bigoplus_{\alpha\in Q} (\mfU_q)_\alpha$, we have that, for each $\alpha\in Q_-$,
\begin{align}\label{sum2}
\sum_{\substack{\deg f_\zeta = \alpha \\ h\in P^\vee}} C_{\zeta, h} f_\zeta q^h = 0.
\end{align}
Since $f_\zeta$ is a monomial in $f_i$'s, $f_{\overline{i}}$'s, and $F_{\overline{i}}$'s, we have

\begin{align*}
\Delta(f_\zeta) = f_\zeta \otimes q^{h_{\alpha}} + \ldots + q^{h_{\alpha}^\prime} \otimes f_\zeta.
\end{align*}

Hence, the degree $(\alpha, 0)$ term in the decomposition of  $\Delta(f_\zeta)$ equals $ f_\zeta \otimes q^{h_{\alpha}}$. Applying the comultiplication to (\ref{sum2}) gives

\begin{align*}
\sum_{\substack{\deg f_\zeta = \alpha \\ h\in P^\vee}} C_{\zeta, h} ( f_\zeta q^{h} \otimes q^{h+h_{\alpha}} + \hdots + q^{h+h_{\alpha}^\prime} \otimes f_\zeta q^{h} ) = 0.
\end{align*}

Collecting the terms of degree $(\alpha, 0)$ gives that

\begin{align*}
\sum_{\substack{\deg f_\zeta = \alpha \\ h\in P^\vee}} C_{\zeta, h}  (f_\zeta q^{h} \otimes q^{h+h_{\alpha}})= 0.
\end{align*}

Since for every $\alpha$, the set $\{q^{h+h_{\alpha}} \mid h \in P^\vee\}$ is linearly independent, we have that, for all $h\in P^\vee$: 

\begin{align*}
& \sum_{\substack{\deg f_\zeta = \alpha }} C_{\zeta, h} f_\zeta q^{h} = 0.
\end{align*}

Due to the linear independence of $f_\zeta$, we conclude that $C_{\zeta, h} =0$ for all $\zeta, h$,  as desired.\end{proof}

\begin{theorem} \label{thm:tridecomp} There is a $\C(q)$-linear isomorphism 
\begin{align*}
\mfU_q(\mfp_n) \cong \mfU_q^- \otimes \mfU_q^0 \otimes \mfU_q^+.
\end{align*}
\end{theorem}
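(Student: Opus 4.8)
The plan is to bootstrap from Lemma \ref{lem:tridecomp}, which already supplies the two ``half'' triangular decompositions $\mfU_q^{\leq 0} \cong \mfU_q^- \otimes \mfU_q^0$ and $\mfU_q^{\geq 0} \cong \mfU_q^0 \otimes \mfU_q^+$. The multiplication map $m: \mfU_q^- \otimes \mfU_q^0 \otimes \mfU_q^+ \to \mfU_q\mfp_n$, $f_\zeta \otimes q^h \otimes e_\eta \mapsto f_\zeta q^h e_\eta$, is the candidate isomorphism. Surjectivity is the easy direction: the defining relations of Proposition \ref{prop:alg} let one move every $q^h$ to the middle and every $f$-type generator ($f_i$, $f_{\overline i}$, $F_{\overline j}$) to the left past every $e$-type generator ($e_i$, $e_{\overline i}$), since each mixed relation $e_if_j - f_je_i = \cdots$, $e_if_{\overline j} - f_{\overline j}e_i = \cdots$, $F_{\overline i}e_j - \cdots$, etc. rewrites a product $e\cdot f$ (or $e\cdot F$, $e\cdot q^h$) as a $\C(q)$-combination of terms with the $f$-letters weakly to the left; thus every monomial in the generators is a combination of the spanning set $\{f_\zeta q^h e_\eta\}$.

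For injectivity I would use the same degree-plus-comultiplication argument that proves Lemma \ref{lem:tridecomp}, now run in two ``directions.'' Fix bases $\{f_\zeta \mid \zeta \in \Omega\}$ of $\mfU_q^-$ (monomials in $f_i$'s, $f_{\overline i}$'s, $F_{\overline j}$'s) and $\{e_\eta \mid \eta \in \Xi\}$ of $\mfU_q^+$ (monomials in $e_i$'s, $e_{\overline i}$'s), as in Lemmas \ref{lem:tridecomp} and its analogue. Suppose $\sum C_{\zeta,h,\eta}\, f_\zeta q^h e_\eta = 0$. Using the $Q$-grading $\mfU_q\mfp_n = \bigoplus_{\alpha\in Q}(\mfU_q)_\alpha$ and $\deg(f_\zeta q^h e_\eta) = \deg f_\zeta + \deg e_\eta$, one first reduces to a fixed total degree $\alpha = \alpha_- + \alpha_+$ with $\alpha_-\in Q_-$, $\alpha_+\in Q_+$; but this does not yet separate $\zeta$ from $\eta$, since many pairs can sum to the same $\alpha$, so the grading alone is not enough. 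Here I would apply $\Delta$ and extract the term whose left tensor factor lies in degree $\alpha_-\in Q_-$ and whose right factor lies in degree $\alpha_+\in Q_+$: from Lemma \ref{lem:comult} one sees $\Delta(f_\zeta) = f_\zeta\ot q^{h_\alpha} + (\text{terms of strictly larger left-degree in } Q_-) $ and $\Delta(e_\eta) = q^{h'}\ot e_\eta + (\text{terms of strictly smaller left-degree})$, while $\Delta(q^h)=q^h\ot q^h$; multiplying out $\Delta(f_\zeta)\Delta(q^h)\Delta(e_\eta)$, the component with left-factor exactly in degree $\alpha_-$ and right-factor exactly in degree $\alpha_+$ is $f_\zeta q^{h+h_\alpha}\ot q^{h+h_\alpha'}e_\eta$ (up to nonzero scalars, which I would track carefully). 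Linear independence of $\{q^{h+h_\alpha} \mid h\}$ then isolates, for each $h$, the identity $\big(\sum_{\zeta,\eta} C_{\zeta,h,\eta} f_\zeta\big) \ot \big(\cdots e_\eta\big) = 0$ in $\mfU_q^{\le 0}\ot\mfU_q^{\ge 0}$, and Lemma \ref{lem:tridecomp} (applied on each side) together with the linear independence of the $f_\zeta$ and of the $e_\eta$ forces all $C_{\zeta,h,\eta}=0$.

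The main obstacle is the bookkeeping in the last step: unlike the one-sided case of Lemma \ref{lem:tridecomp}, here $\Delta$ of a mixed monomial $f_\zeta q^h e_\eta$ produces many cross terms (e.g.\ the $F_{\overline i}\ot e_i$ and $f_i\ot F_{\overline{i+1}}$ pieces in $\Delta(f_{\overline i})$, and the $e_{\overline i}\ot F_{\overline{i+1}}$ piece in $\Delta(e_i)$), so one must argue that no cross term from the expansion of one basis element can contribute to the $(\alpha_-,\alpha_+)$-component that is being used to detect another. This is where the $Q_-$/$Q_+$ separation of degrees is essential: any term of $\Delta(f_\zeta)$ other than $f_\zeta\ot q^{h_\alpha}$ has its left tensor factor of degree strictly below $\alpha_-$ in $Q_-$ (more negative), and symmetrically for $e_\eta$, so after multiplying by the central-in-the-relevant-sense $q^h$ factors the only surviving contribution to left-degree $=\alpha_-$, right-degree $=\alpha_+$ is the ``leading'' one. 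Once that is pinned down, the rest is the routine descent already modeled in Lemma \ref{lem:tridecomp}. I would also remark that an alternative, cleaner route is to invoke a PBW-type basis for $\mfU_q\mfp_n$ if one is available from \cite{AGG}; but since the excerpt does not state one, the comultiplication argument above is the self-contained path I would follow.
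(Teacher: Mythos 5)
Your surjectivity argument and the general plan for injectivity (reduce via the $Q$-grading, apply $\Delta$, pick off a leading bidegree component, and feed the result into Lemma \ref{lem:tridecomp}) coincide with the paper's. The genuine gap is in the step where you claim that, for an arbitrary splitting $\alpha=\alpha_-+\alpha_+$, the bidegree-$(\alpha_-,\alpha_+)$ component of $\Delta(\sum C_{\zeta,h,\eta}f_\zeta q^h e_\eta)$ receives contributions only from the pairs with $\deg f_\zeta=\alpha_-$ and $\deg e_\eta=\alpha_+$. Your justification --- that every non-leading term of $\Delta(f_\zeta)$ has left degree strictly below $\deg f_\zeta$ (elsewhere you assert ``strictly larger''; the two statements are already inconsistent) --- is false in both directions: by Lemma \ref{lem:comult}, $\Delta(f_{\overline{i}})$ contains $q^{k_i}\ot f_{\overline{i}}$ and $f_i\ot F_{\overline{i+1}}$, whose left degrees $0$ and $-\alpha_i$ lie strictly above $-\gamma_i$ in the order $\lambda\le\mu \Leftrightarrow \lambda-\mu\in Q_-$, as well as the cross term $F_{\overline{i}}\ot e_i$, whose left degree $-\beta_i$ lies strictly below $-\gamma_i$. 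Worse, these cross terms really do mix distinct pairs having the same total degree: for the monomial $f_{\overline{i}}q^h e_{\overline{i}}$ (bidegree $(-\gamma_i,\gamma_i)$, total degree $0$), multiplying $F_{\overline{i}}\ot e_i$ from $\Delta(f_{\overline{i}})$ by $e_{\overline{i}}\ot q^{k_{i+1}}$ from $\Delta(e_{\overline{i}})$ produces a term of bidegree $(-\beta_i+\gamma_i,\alpha_i)=(-\alpha_i,\alpha_i)$ --- exactly the window you would use to detect the coefficients of the monomials $f_i q^{h'} e_i$, which also have total degree $0$. So for a fixed total degree your extraction does not decouple the pairs, and the subsequent appeal to Lemma \ref{lem:tridecomp} and to the independence of the $e_\eta$ is unjustified.

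The ingredient you are missing is the paper's extremality device: it fixes the partial order $\lambda\le\mu$ iff $\lambda-\mu\in Q_-$ and, among the bidegrees $(\deg f_\zeta,\deg e_\tau)$ occurring with nonzero coefficient and summing to $\alpha$, chooses $\gamma=\deg f_\zeta$ minimal and $\beta=\deg e_\tau$ maximal \emph{before} collecting the $(\gamma,\beta)$-component, and derives the contradiction at that extremal bidegree only. This ordering, together with the attendant check of which cross terms of $\Delta$ (the $F_{\overline{i}}\ot e_i$, $f_i\ot F_{\overline{i+1}}$, $e_{\overline{i}}\ot F_{\overline{i+1}}$ pieces) can reach the extremal component, is precisely the bookkeeping you flagged as ``the main obstacle'' and then dismissed with an incorrect degree estimate; without it, or an equivalent filtration argument, the injectivity half of your proof does not go through.
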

\begin{proof}
Let $\{f_\zeta \mid \zeta \in \Omega\}$, $\{q^h \mid h \in P^\vee\}$, and $\{e_\tau \mid \tau \in \Omega^\prime\}$ be monomial bases of $\mfU_q^-$, $\mfU_q^0$, and $\mfU_q^+$, respectively, where $\Omega$ is the index set as in the proof of Lemma \ref{lem:tridecomp}, and $\Omega^\prime$ is another  index set. Using the defining relations of $\mfU_q\mfp_n$ in Proposition \ref{prop:alg}, we can  express every monomial in $\mfU_q(\mfp_n)$ as a linear combination of monomials each of which has   $e_i$ and $e_{\overline{i}}$ on the right. By Lemma \ref{lem:tridecomp}, the monomials   $f_{\zeta}q^he_{\tau}$ span $\mfU_q\mfp_n$. Hence, it remains to show that  $f_{\zeta}q^he_{\tau}$ are linearly independent over $\C(q)$. 

Suppose
\begin{align*}
\sum_{\substack{\zeta\in\Omega, \tau\in \Omega^\prime \\ h\in P^\vee}} C_{\zeta, h,\tau} f_\zeta q^h e_{\tau} = 0,
\end{align*}
where $C_{\zeta, h,\tau}$ is some nonzero constant in $\C(q)$. Due to the $Q$-grading of $\mfU_q\mfp_n$, we have that, for all $\alpha \in Q$:
\begin{align} \label{sum1}
\sum_{\substack{\deg f_\zeta + \deg e_\tau = \alpha \\ h\in P^\vee}} C_{\zeta, h,\tau} f_\zeta q^h e_{\tau} = 0.
\end{align}

Define a partial ordering on $\mfh^*$ by $\lambda \leq \mu $ if and only if $\lambda - \mu \in Q_-$ for $\lambda,\mu \in \mfh^*$. We then choose $\gamma = \deg f_{\zeta}$ and $\beta = \deg e_{\tau}$, which are minimal and maximal, respectively, among those for which $\gamma + \beta = \alpha$ and $C_{\zeta, h, \tau} \neq 0$. If $\displaystyle \gamma = -\sum_{i=1}^{n-1}(m_i\alpha_{i}+n_i\gamma_{i}) - \sum_{i=1}^{n}r_i\beta_{i}$, set $\displaystyle h_\gamma = \sum_{i=1}^{n-1}(m_i+n_i)k_{i+1} + \sum_{i=1}^{n}r_ik_{i}$, and if $\displaystyle \beta = \sum_{i=1}^{n-1}(m^\prime_i\alpha_{i}+n^\prime_i\gamma_{i})$, set $\displaystyle h_\beta = \sum_{i=1}^{n-1}(m^\prime_ik_{i}+n^\prime_ik_{i})$, for $m_i,m^\prime_i,n_i,n^\prime_i, r_i\in \Z_{\geq 0}$.

The term of degree $(0,\beta)$ in $\Delta(e_\tau)$ is $q^{h_\beta}\otimes e_{\tau}$ and the term of degree $(\gamma,0)$ in $\Delta(f_\zeta)$ is $f_{\zeta} \otimes q^{h_\gamma}$. Applying the comultiplication to the sum in (\ref{sum1}), and looking at the terms of degree $(\gamma, \beta)$, we have that
\begin{align*}
\sum_{\substack{\deg f_\zeta = \gamma \\ \deg e_\tau = \beta \\ h\in P^\vee}} C_{\zeta, h,\tau}  (f_\zeta q^{h+h_\beta} \otimes q^{h+h_{\gamma}}e_\tau)= 0.
\end{align*}

By Lemma \ref{lem:tridecomp}, the elements $f_\zeta q^h$ are linearly independent for $\zeta\in\Omega, h\in P^\vee$. Thus, for all $h\in P^\vee$, we have that

\begin{align*}
& \sum_{\substack{\deg e_\tau = \beta }} C_{\zeta, h, \tau} q^{h+h_\gamma}e_\tau = 0.
\end{align*}

Due to the linear independence of $e_\tau$, we conclude that  $C_{\zeta, h, \tau} =0$, leading to contradiction. Therefore all coefficients in \eqref{sum1} are zero.
\end{proof}

\section{Highest weight representation theory of $\mfU_q\mfp_n$}\label{sec:highest-weight-uqpn}

A $\mfU_q\mfp_n$-module $V^q$ is called a \textit{weight module} if it admits a weight space decomposition
\begin{align*}
V^q = \bigoplus\limits_{\mu\in P} V_\mu^q
\end{align*}
where $V_\mu^q = \{v\in V^q \mid q^hv = q^{\mu(h)}v \text{ for all } h\in P^\vee\}$ is the $\mu$-\textit{weight space}. We call $\mu \in P$  a \textit{weight} of $V^q$ if $V_\mu^q \neq 0$. A nonzero vector $v\in V_\mu^q$ is called a \textit{weight vector} of weight $\mu$. If $v\in V^q$ is a nonzero vector such that $\mfU_q^+v = 0$, then $v$ is called a \textit{maximal vector}. 

The dimension of each weight space $\dim V_\mu^q$ is called the \textit{weight multiplicity} of $\mu$. If $\dim V_\mu^q < \infty$ for all $\mu \in P$, the \textit{character} of $V^q$ is defined by 
\begin{align*}
\ch V^q = \sum\limits_\mu \dim_\mbbF V_\mu^q e^\mu
\end{align*}
where $\mbbF = \C (q)$ and $e^\mu$ are formal basis elements of the group algebra $\mbbF[P]$ with multiplication defined by $e^\alpha e^\beta = e^{\alpha+\beta}$.

The above definitions and notions can be introduced in the same way for $\mfU\mfp_n$-modules
over $\mbbF = \C$.

\begin{definition} A weight $\mfU_q\mfp_n$-module $V^q$ is called a \emph{highest weight module} with highest weight $\lambda \in P$ if the following holds for some nonzero $v\in V^q$:
\begin{itemize}
\item[(a)] $v$ is a maximal vector of $V^q$,
\item[(b)] $v\in V_\lambda^q$, and
\item[(c)] $V^q = \mfU_q\mfp_n v$.
\end{itemize}

This vector $v$, which is unique up to a constant multiple, is called a \emph{highest weight vector} of $V^q$.
\end{definition}

This definition, along with Theorem \ref{thm:tridecomp}, shows that $V^q = \mfU_q^- v$ for any highest weight module with highest weight vector $v$ and highest weight $\lambda$.

Fix $\lambda \in P$ and define $J^q(\lambda)$ to be the left ideal of $\mfU_q\mfp_n$ generated by $e_i$, $e_{\overline{i}}$, and $q^h - q^{\lambda(h)}1$, for $i\in I$ and $h\in P^\vee$. Then  $M^q(\lambda) = \mfU_q\mfp_n / J^q(\lambda)$ is the Verma module, which is a $\mfU_q\mfp_n$-module by left multiplication. Set $v = 1 + J^q(\lambda)$. Then $M^q(\lambda)$ is a highest weight module with highest weight $\lambda$ and highest weight vector $v$. The proof of the following proposition is standard. See, for example, the proof of Proposition 3.2.2 in \cite{HK}, which uses the same arguments.
\begin{proposition} \label{prop-verma}  \hspace{2pt} \\ \vspace{-20pt}
\begin{itemize}
\item[(a)] $M^q(\lambda)$ is a free $\mfU_q^-$-module of rank 1, generated by the highest weight vector $v = 1 + J^q(\lambda)$.
\item[(b)] Every highest weight $\mfU_q\mfp_n$-module with highest weight $\lambda$ is a homomorphic image of $M^q(\lambda)$.
\item[(c)] The Verma module $M^q(\lambda)$ has a unique maximal submodule.
\end{itemize}

\end{proposition}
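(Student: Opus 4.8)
The plan is to prove the three statements (a), (b), (c) in sequence, since each one feeds into the next, exactly as in the standard Verma module argument for quantized enveloping algebras (the reference given is Proposition 3.2.2 in \cite{HK}).

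\textbf{Part (a).} First I would show $M^q(\lambda)$ is free of rank one over $\mfU_q^-$. By Theorem \ref{thm:tridecomp} we have $\mfU_q\mfp_n \cong \mfU_q^- \otimes \mfU_q^0 \otimes \mfU_q^+$ as $\C(q)$-vector spaces. The left ideal $J^q(\lambda)$ is generated by $e_i$, $e_{\overline i}$ and $q^h - q^{\lambda(h)}$. Using the triangular decomposition and the fact that $\mfU_q^+$ is spanned by monomials in $e_i, e_{\overline i}$, while $\mfU_q^0$ is spanned by the $q^h$, one sees that $J^q(\lambda)$ is exactly the span of those basis monomials $f_\zeta q^h e_\tau$ for which either $e_\tau \ne 1$ or $q^h \ne 1$; equivalently, $\mfU_q\mfp_n = \mfU_q^- \oplus J^q(\lambda)$ as $\C(q)$-vector spaces (here one uses that modulo $J^q(\lambda)$ every $q^h$ becomes the scalar $q^{\lambda(h)}$ and every positive generator dies, and then pulls everything through the triangular decomposition). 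The quotient map $\mfU_q^- \to M^q(\lambda)$, $u \mapsto u v$, is therefore a $\C(q)$-linear isomorphism, and it is clearly $\mfU_q^-$-linear, giving $M^q(\lambda) \cong \mfU_q^-$ as a left $\mfU_q^-$-module; in particular $v$ generates it freely.

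\textbf{Part (b).} Given any highest weight module $W$ with highest weight $\lambda$ and highest weight vector $w$, define $\Phi: \mfU_q\mfp_n \to W$ by $\Phi(u) = u w$. This is a $\mfU_q\mfp_n$-module homomorphism, surjective since $W = \mfU_q\mfp_n w$ by condition (c) of the definition of highest weight module. I would check that $J^q(\lambda) \subseteq \Ker\Phi$: indeed $e_i w = e_{\overline i} w = 0$ because $w$ is maximal (i.e. $\mfU_q^+ w = 0$), and $(q^h - q^{\lambda(h)})w = 0$ because $w \in W_\lambda^q$. Hence $\Phi$ factors through $M^q(\lambda) = \mfU_q\mfp_n / J^q(\lambda)$, yielding a surjection $M^q(\lambda) \onto W$.

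\textbf{Part (c).} For the unique maximal submodule, the key point is that $M^q(\lambda)$, being a highest weight module, carries a weight space decomposition $M^q(\lambda) = \bigoplus_{\mu \le \lambda} M^q(\lambda)_\mu$ with $\dim M^q(\lambda)_\lambda = 1$ (this follows from part (a) together with the $Q$-grading of $\mfU_q^-$, since $\mfU_q^-$ has trivial degree-$0$ part spanned by $1$). Every submodule $N \subsetneq M^q(\lambda)$ is again a weight module, and $N \ne M^q(\lambda)$ forces $v \notin N$, hence $N_\lambda = 0$, so $N \subseteq \bigoplus_{\mu < \lambda} M^q(\lambda)_\mu$. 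Therefore the sum $N_{\max}$ of all proper submodules still avoids the $\lambda$-weight space, so $N_{\max} \ne M^q(\lambda)$; it is thus itself a proper submodule containing every proper submodule, i.e. the unique maximal one.

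The one step requiring genuine care — the main obstacle — is establishing in part (a) that $\mfU_q\mfp_n = \mfU_q^- \oplus J^q(\lambda)$ as $\C(q)$-vector spaces, i.e. that $J^q(\lambda)$ meets $\mfU_q^-$ trivially and together they span. The spanning is easy from the triangular decomposition; the trivial intersection is where one must argue carefully that reducing the $\mfU_q^0$- and $\mfU_q^+$-factors of a monomial $f_\zeta q^h e_\tau$ modulo the generators of $J^q(\lambda)$ does not produce unwanted cancellations among the $f_\zeta$, which is precisely the content of the linear independence of the $f_\zeta$ established in Lemma \ref{lem:tridecomp} and Theorem \ref{thm:tridecomp}. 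Because all the relations involved are $Q$-homogeneous, one can work weight-space by weight-space, and the argument goes through as in \cite{HK}.
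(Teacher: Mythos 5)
Your argument is exactly the standard Verma-module proof that the paper itself invokes by citing Proposition 3.2.2 of \cite{HK}: freeness of $M^q(\lambda)$ over $\mfU_q^-$ via the triangular decomposition of Theorem \ref{thm:tridecomp}, the universal property for (b), and the weight-space argument (one-dimensional $\lambda$-weight space, every proper submodule avoiding it) for (c), so you follow the same route as the paper and the proof is correct. One small wording fix in (a): $J^q(\lambda)$ is not literally the span of the basis monomials $f_\zeta q^h e_\tau$ with $e_\tau \neq 1$ or $q^h \neq 1$ (it contains $f_\zeta\bigl(q^h - q^{\lambda(h)}\bigr)$, which has a component along $f_\zeta\cdot 1$, and it does not contain $q^h$ itself, since otherwise $1\in J^q(\lambda)$); the statement you actually need, state, and justify is the direct-sum decomposition $\mfU_q\mfp_n = \mfU_q^- \oplus J^q(\lambda)$, proved weight-space by weight-space from the linear independence in Theorem \ref{thm:tridecomp}.
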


Let $N_q(\lambda)$ denote the unique maximal submodule of the Verma module $M^q(\lambda)$ from Proposition \ref{prop-verma}(c). Then the unique irreducible quotient 
\begin{align*}
V^q(\lambda) = M^q(\lambda) / N_q(\lambda)
\end{align*}
is the irreducible highest weight module over $\mfU_q\mfp_n$ with highest weight $\lambda$.

We note again that the definitions of highest weight module can be introduced in the same way for $\mfU\mfp_n$-modules over $\mathbb F = \C$. In the latter case we will use the notation $M(\lambda)$ and $V(\lambda)$ for the Verma module and its irreducible quotient, respectively.  We denote by  $\Lambda^+$ the set of $\mfp_n$-dominant integral weights:
\begin{align*}
\Lambda^+ &\coloneqq \{\lambda_1\epsilon_1 + \hdots + \lambda_n\epsilon_n \in \mfh^* \mid \lambda_i - \lambda_{i+1} \in \Z_{\geq 0}, \, \forall i \in I\}.
\end{align*}
The following proposition and theorem will be used to prove an important result concerning highest weight modules over $\mfU_q\mfp_n$.

\begin{proposition} \label{prop:irrhighweightfinitedimclassical}  Let $V$ be a highest weight $\mathfrak{p}_n$-module with highest weight $\lambda\in \Lambda^+$ and highest weight vector $v$ such that $f_i^{\lambda(k_i)-\lambda(k_{i+1}) + 1}v = 0$ for all $i\in I$. Then $V$ is finite dimensional.

\end{proposition}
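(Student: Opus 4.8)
The plan is to reduce the claim to the classical finite-dimensionality statement for $\mathfrak{gl}(n)$-modules and then show that the remaining ``odd'' directions in $\mathfrak{p}_n$ only contribute finitely many extra weights. First I would observe that $\mathfrak{p}_n$ contains the subalgebra $\mathfrak{gl}(n)$ spanned by $\mathfrak{h}$ together with the $e_i$ and $f_i$ (and their brackets), since these satisfy exactly the Serre relations of $\mathfrak{sl}(n)$ together with the Cartan; indeed from Proposition~\ref{prop:pn} the relations $[e_i,f_j]=-\delta_{ij}(k_i-k_{i+1})$, $[h,e_i]=\alpha_i(h)e_i$, and the Serre relations $[e_i,[e_i,e_{i\pm1}]]=0$ hold. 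So $\mathfrak{p}_n^+$ decomposes as $\mathfrak{n}^+_{\mathfrak{gl}(n)}$ (spanned by the $e_i$ and their iterated brackets, i.e.\ the $\msE_{-i,-j}$ with $|j|<|i|$) plus the odd part spanned by the $e_{\bar i}$ and their brackets (the $\msE_{i,-j}$ with $i>0$).

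The key step is then: the $\mathfrak{gl}(n)$-submodule $W \coloneqq \Ug\mathfrak{gl}(n)\cdot v$ generated by the highest weight vector is an integrable highest weight $\mathfrak{gl}(n)$-module with highest weight $\lambda\in\Lambda^+$ and $f_i^{\lambda(k_i)-\lambda(k_{i+1})+1}v=0$, hence finite dimensional by the classical theorem of Cartan--Weyl (or its standard proof via $\mathfrak{sl}_2$-theory: the condition $f_i^{m_i+1}v=0$ forces each $\mathfrak{sl}_2$-triple $(e_i,f_i,k_i-k_{i+1})$ to act locally finitely, so $W$ is a quotient of the finite-dimensional irreducible module $L(\lambda)$ for $\mathfrak{gl}(n)$, possibly by an argument that $W$ is already irreducible or at least has the same character support). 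Next I would use the triangular decomposition $\mathfrak{p}_n = \mathfrak{p}_n^-\oplus\mathfrak{h}\oplus\mathfrak{p}_n^+$ and the PBW theorem for $\Ug\mathfrak{p}_n$ to write $V = \Ug(\mathfrak{p}_n^-)v$, and then factor $\mathfrak{p}_n^- = \mathfrak{m}^- \oplus \mathfrak{n}^-_{\mathfrak{gl}(n)}$ where $\mathfrak{m}^-$ is the odd-plus-$F_{\bar j}$ part, spanned by the finitely many elements $f_{\bar i}$ ($i\in I$) and $F_{\bar j}$ ($j\in I\cup\{n\}$) — here crucially the relations $f_{\bar i}^2$-type vanishing (Lemma and the classical analogues: $[f_{\bar i},f_{\bar j}]=0$, $f_{\bar i}$ and $F_{\bar j}$ are odd with square zero, $[F_{\bar i},F_{\bar j}]=0$) mean that $\Ug(\mathfrak{m}^-)$ is finite dimensional, being a quotient of an exterior-type algebra on finitely many odd generators. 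Therefore $V = \Ug(\mathfrak{m}^-)\cdot \Ug(\mathfrak{n}^-_{\mathfrak{gl}(n)})v = \Ug(\mathfrak{m}^-)\cdot W$ is a finite sum of translates of the finite-dimensional space $W$, hence finite dimensional.

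More carefully, by PBW I order a basis of $\Ug(\mathfrak{p}_n^-)$ so that the $\mathfrak{n}^-_{\mathfrak{gl}(n)}$ factors come last; then $V = \sum_{b} b\,W$ where $b$ ranges over the (finitely many) PBW monomials in $\mathfrak{m}^-$. The remaining point is to check $\Ug(\mathfrak{m}^-)$ really is finite dimensional: since all generators of $\mathfrak{m}^-$ are odd with zero square, any PBW monomial that repeats a generator vanishes, so there are at most $2^{|I| + |I\cup\{n\}|}$ such monomials. Thus $\dim V \le 2^{2n-1}\dim W < \infty$.

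The main obstacle — or at least the step requiring the most care — is making precise that $W = \Ug\mathfrak{gl}(n)\cdot v$ is finite dimensional: one must verify that $\mathfrak{gl}(n)$ sits inside $\mathfrak{p}_n$ compatibly with the chosen Borel and Cartan, that $\lambda$ restricted to the $\mathfrak{gl}(n)$-Cartan is still dominant integral, and that the hypotheses $f_i^{\lambda(k_i)-\lambda(k_{i+1})+1}v=0$ are exactly the integrability conditions needed to invoke the classical highest-weight classification (so that $W$ is a homomorphic image of the finite-dimensional irreducible $\mathfrak{gl}(n)$-module of highest weight $\lambda$). Once that is in hand, the reduction via PBW and the nilpotence/anticommutativity of the odd negative generators is routine, and I would keep those verifications brief, citing the classical theory for $\mathfrak{gl}(n)$ and Proposition~\ref{prop:pn} for the structural relations.
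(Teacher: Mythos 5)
Your overall strategy is essentially the one the paper intends (the paper only sketches it, citing the analogue, Proposition 1.9 of \cite{GJKK}): the hypotheses $e_iv=0$, $f_i^{\lambda(k_i)-\lambda(k_{i+1})+1}v=0$ with $\lambda\in\Lambda^+$ make $W=U(\mfgl(n))\,v$ a finite-dimensional highest weight $\mfgl(n)$-module by classical theory, and then PBW together with the oddness of the remaining negative generators bounds $\dim V$ by a finite multiple of $\dim W$. The $\mfgl(n)$ step in your write-up is fine.

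There is, however, a genuine error in the reduction step. The odd part of $\mfp_n^-$ is \emph{not} spanned by the $2n-1$ elements $f_{\overline{i}}$ ($i\in I$) and $F_{\overline{j}}$ ($j\in I\cup\{n\}$); the paper's phrase that $\mfp_n^-$ is ``spanned by'' $\{f_i,f_{\overline{i}},F_{\overline{j}}\}$ must be read as ``generated by'' (already a dimension count shows the literal span is too small). The negative odd root vectors are the $\msE_{-a,b}$ with $1\le b\le a\le n$, of which there are $n(n+1)/2$. For instance $[f_{\overline{2}},f_1]=\msE_{-3,1}$ is a nonzero odd vector of weight $-\epsilon_1-\epsilon_3$ (Proposition \ref{prop:pn} only forces $[f_{\overline{i}},f_j]=0$ when $i\neq j,j+1$), and it lies neither in your $\mathfrak{m}^-$ nor in $\mathfrak{n}^-_{\mfgl(n)}$. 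Hence $\mfp_n^-\neq\mathfrak{m}^-\oplus\mathfrak{n}^-_{\mfgl(n)}$ once $n\ge 3$, the PBW monomials you allow do not span $U(\mfp_n^-)$, and the bound $\dim V\le 2^{2n-1}\dim W$ is unjustified as stated. The gap is easily repaired without changing the strategy: take $\mathfrak{m}^-$ to be the full odd part of $\mfp_n^-$, i.e.\ the span of all $\msE_{-a,b}$ with $b\le a$. This subspace is purely odd and abelian (the sum of two roots of the form $-\epsilon_a-\epsilon_b$ is never a root, or equivalently the product of two upper-right block matrices vanishes), so $U(\mathfrak{m}^-)\cong\Lambda(\mathfrak{m}^-)$ has dimension $2^{n(n+1)/2}$; moreover $[\mathfrak{n}^-_{\mfgl(n)},\mathfrak{m}^-]\subseteq\mathfrak{m}^-$, so PBW with the odd basis vectors ordered first gives $U(\mfp_n^-)=\Lambda(\mathfrak{m}^-)\,U(\mathfrak{n}^-_{\mfgl(n)})$, whence $V=U(\mfp_n^-)v\subseteq\Lambda(\mathfrak{m}^-)\,W$ and $\dim V\le 2^{n(n+1)/2}\dim W<\infty$. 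With that correction your argument matches the paper's.
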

\begin{proof}
The proof is of this is similar to that of Proposition 1.9 in \cite{GJKK}. The main idea of the proof is that since $e^2_{\overline{i}} = 0$ and $f^2_{\overline{i}} = 0$, then, with the aid of the Poincar\'e-Birkhoff-Witt theorem, we show that $\mfU_q(\mfp_n)_0v$, where $(\mfp_n)_0$ is generated by $\{e_i, f_i, k_j \mid i\in I, j\in J\}$, is finite generated, using the fact that $f_i^{\lambda(k_i)-\lambda(k_{i+1}) + 1}v = 0$ and $v$ is a highest weight vector. For details, see the proof of Proposition 1.9 in \cite{GJKK}.
\end{proof}

\begin{theorem}[\cite{K2}] \label{thm:dominant-weight-fin-dim-mod} For any weight $\lambda\in \mfh^*$, $V(\lambda)$ is finite dimensional if and only if $\lambda \in \Lambda^+$. \end{theorem}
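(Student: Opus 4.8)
The plan is to reduce both directions to $\mfsl_2$-representation theory, applied to the $\mfsl_2$-subalgebras generated by the $e_i,f_i$, together with Proposition \ref{prop:irrhighweightfinitedimclassical} for the harder implication. (The statement is classical, going back to Kac \cite{K2}; I only sketch the argument.) First I would fix a highest weight vector $v\in V(\lambda)$, put $h_i\coloneqq k_i-k_{i+1}$ and $m_i\coloneqq\lambda(k_i)-\lambda(k_{i+1})$, and record that by Proposition \ref{prop:pn} the elements $e_i,f_i,h_i$ form an even $\mfsl_2$-triple with $[e_i,f_i]=-h_i$, $[h_i,e_i]=2e_i$, $[h_i,f_i]=-2f_i$; an immediate induction then gives $[e_i,f_i^k]=-kf_i^{k-1}(h_i-k+1)$, so that $e_if_i^kv=-k(m_i-k+1)f_i^{k-1}v$ for all $k\ge 0$.

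For the necessity of $\lambda\in\Lambda^+$: since $V(\lambda)$ is finite dimensional and the weights $\lambda,\lambda-\alpha_i,\lambda-2\alpha_i,\dots$ are pairwise distinct, there is a least $N\ge 1$ with $f_i^Nv=0$, and then $f_i^{N-1}v\neq 0$. Applying $e_i$ to $f_i^Nv=0$ and using the identity above yields $N(m_i-N+1)f_i^{N-1}v=0$, hence $m_i=N-1\in\Z_{\ge 0}$; as $i\in I$ is arbitrary, $\lambda\in\Lambda^+$.

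For the sufficiency, where now $m_i\in\Z_{\ge 0}$, by Proposition \ref{prop:irrhighweightfinitedimclassical} it suffices to show that $w_i\coloneqq f_i^{m_i+1}v=0$ in $V(\lambda)$ for every $i\in I$. I would deduce this from the claim that $w_i$ is annihilated by $\mfp_n^+$: granting the claim, $\mfU(\mfp_n)w_i=\mfU(\mfp_n^-)w_i$ by the PBW theorem (as $w_i$ is a weight vector killed by $\mfp_n^+$), and all weights of this submodule are strictly below $\weight(w_i)=\lambda-(m_i+1)\alpha_i$, so it does not contain the highest weight vector $v$; being a proper submodule of the irreducible $V(\lambda)$, it is zero, whence $w_i=0$. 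Now $\mfp_n^+$ is generated by $\{e_j,e_{\overline{j}}\mid j\in I\}$, each of which kills $v$, so it is enough to check that each kills $w_i$ (a short induction on bracket length using the super-Jacobi identity then gives $\mfp_n^+w_i=0$). For $e_j$ with $j\neq i$, and for $e_{\overline{j}}$ with $j\neq i-1$, this is immediate from the relations $[e_j,f_i]=0$ and $[f_i,e_{\overline{j}}]=0$ of Proposition \ref{prop:pn}, as the generator then commutes with $f_i$; and the displayed identity with $k=m_i+1$ gives $e_iw_i=0$.

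The step I expect to be the genuine obstacle is the one remaining case, $e_{\overline{i-1}}$ with $i-1\in I$, which is also where the type-$P$ relation $[e_{\overline{i}},e_{i-1}]=[f_i,e_{\overline{i-1}}]$ of Proposition \ref{prop:pn} is used. Setting $Y\coloneqq[e_{\overline{i-1}},f_i]=-[e_{\overline{i}},e_{i-1}]$, a weight count shows that $\weight(Y)-\alpha_i=\epsilon_{i-1}-\epsilon_i+2\epsilon_{i+1}$ is not a root of $\mfp_n$, so $[Y,f_i]=0$; hence $e_{\overline{i-1}}f_i^{m_i+1}v=(m_i+1)f_i^{m_i}\,Yv$, and $Yv=-[e_{\overline{i}},e_{i-1}]v=0$ because $e_{\overline{i}}v=e_{i-1}v=0$. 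This completes the verification that $\mfp_n^+w_i=0$, and with it the theorem; everything else is routine bookkeeping with the relations of Proposition \ref{prop:pn} and the single $\mfsl_2$-identity.
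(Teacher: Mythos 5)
Your argument is correct, but note that the paper does not prove this statement at all: it is quoted from Kac \cite{K2} (and the surrounding text only supplies Proposition \ref{prop:irrhighweightfinitedimclassical} as an auxiliary tool), so there is no in-paper proof to match. What you have written is a legitimate self-contained derivation from the paper's own ingredients: the necessity direction is the standard $\mfsl_2$-computation (your identity $[e_i,f_i^k]=-kf_i^{k-1}(h_i-k+1)$ is consistent with the sign convention $[e_i,f_i]=-h_i$ of Proposition \ref{prop:pn}), and the sufficiency direction reduces to Proposition \ref{prop:irrhighweightfinitedimclassical} by showing that $w_i=f_i^{m_i+1}v$ is killed by $\mfp_n^+$ and hence vanishes in the irreducible $V(\lambda)$. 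It is worth pointing out that this singular-vector computation is precisely the classical ($q=1$) counterpart of the paper's own Proposition \ref{prop:irrhighweightfinitedim} for $\mfU_q\mfp_n$: there too the only nontrivial generators are $e_i$, $e_{i-1}$ and $e_{\overline{i-1}}$, and the type-$P$-specific case is resolved by the relation $[e_{\overline{i}},e_{i-1}]=[f_i,e_{\overline{i-1}}]$, exactly as in your weight-count step showing $[Y,f_i]=0$ and $Yv=0$. So your route buys a proof where the paper settles for a citation, at the modest cost of re-doing classically what the paper does quantumly two propositions later. Two cosmetic points: the phrase ``all weights of this submodule are strictly below $\weight(w_i)$'' should read ``$\leq\weight(w_i)$, hence strictly below $\lambda$'' (the weight $\weight(w_i)$ itself occurs if $w_i\neq 0$, which does not affect the conclusion that $v$ is not in the submodule); and when you invoke generation of $\mfp_n^+$ by $\{e_j,e_{\overline{j}}\}$, this is indeed the content of the paper's triangular decomposition (the word ``spanned'' in Section \ref{sec:pn} should be read as ``generated''), so your induction on bracket length is justified.
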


We recall some standard definitions from $q$-calculus.  We set
\begin{align*}
[n]_q \coloneqq \frac{q^n - q^{-n}}{q-q^{-1}}.
\end{align*}
We also define $[0]_q! \coloneqq 1$, and $[n]_q! = [n]_q \cdot [n-1]_q \cdot \hdots \cdot [1]_q$. The \textit{divided powers} of $e_i$ and $f_i$ are:
\begin{align*}
e_{i}^{(m)} \coloneqq \frac{e^m_i}{[m]_q!}, \hspace{30pt} f_{i}^{(m)} \coloneqq \frac{f^m_i}{[m]_q!}
\end{align*}

\begin{lemma} \label{lem:action} For all $i\in I$ and $m\in \Z_{\geq 0}$, we have
\begin{itemize}
\item[(a)] \begin{align*}
e_if_i^{(m)} = f_i^{(m)}e_i - f_i^{(m-1)}\frac{q^{-m+1}q^{2k_i}-q^{m-1}q^{2k_{i+1}}}{q^2-1} + (1-q^{-2})\left(q^{m-1}f_{i}^{(m-1)} f_{\overline{i}}+ \frac{q^{2m-2}}{2}q^{k_i}F_{\overline{i}}f_i^{(m-2)}\right)e_{\overline{i}},
\end{align*}
\item[(b)] \begin{align*}
f_ie_i^{(m)} = e_i^{(m)}f_i + q^2e_i^{(m-1)}\frac{q^{m-1}q^{2k_i}-q^{-m+1}q^{2k_{i+1}}}{q^2-1} + (q^2-1)\left(q^2e_i^{(m-1)}e_{\overline{i}}f_{\overline{i}} - \frac{1}{2}e_i^{(m-2)}q^{k_{i+1}}e_{\overline{i}}F_{\overline{i+1}} \right)
\end{align*}
(in the case $m=1$ we assume that the terms involving $f_i^{(m-2)}$ and  $e_i^{(m-2)}$ are zero).
\end{itemize}

\begin{proof} We prove (a) by induction on $m$. The base case $m=1$:
\begin{align*}
e_if_i = f_ie_i - \frac{q^{2k_i}-q^{2k_{i+1}}}{q^2-1} + \frac{q^2-1}{q^2}f_{\overline{i}}e_{\overline{i}}
\end{align*}
follows from  Proposition \ref{prop:alg}. For the base case $m=2$ we multiply the above relation on the right by $f_i$ and obtain:
\begin{align*}
 e_if_i^2 &= f_ie_if_i - \frac{q^{2k_i}-q^{2k_{i+1}}}{q^2-1}f_i + \frac{q^2-1}{q^2}f_{\overline{i}}e_{\overline{i}}f_i\\
 &= f_i^2e_i - f_i\frac{q^{2k_i}-q^{2k_{i+1}}}{q^2-1} + (1-q^{-2})f_if_{\overline{i}}e_{\overline{i}} - f_i\frac{q^{-2}q^{2k_i}-q^2q^{2k_{i+1}}}{q^2-1} + (1-q^{-2})f_{\overline{i}}e_{\overline{i}}f_i\\
 &= f_i^2e_i - f_i\frac{(1+q^{-2})q^{2k_i}-(1+q^2)q^{2k_{i+1}}}{q^2-1} + (1-q^{-2})\left(f_if_{\overline{i}} + f_{\overline{i}}f_i\right)e_{\overline{i}}\\
&= f_i^2e_i - f_i\frac{(1+q^{-2})q^{2k_i}-(1+q^2)q^{2k_{i+1}}}{q^2-1} + (1-q^{-2})\left((1+q^{2})f_if_{\overline{i}}+ \frac{1+q^{2}}{2}qq^{k_i}F_{\overline{i}}\right)e_{\overline{i}}.
\end{align*}

Dividing both sides by $[2]_q$ leads to the desired result for $m=2$.

Now, suppose that (a) is true for some $m$. Then we have that
{\allowdisplaybreaks
\begin{align*}
 e_if_i^{(m)}f_i &=  f_i^{(m)}e_if_i - f_i^{(m-1)}\frac{q^{-m+1}q^{2k_i}-q^{m-1}q^{2k_{i+1}}}{q^2-1}f_i + (1-q^{-2})\left(q^{m-1}f_{i}^{(m-1)}f_{\overline{i}} + \frac{q^{2m-2}}{2}q^{k_i}F_{\overline{i}}f_i^{(m-2)}\right)e_{\overline{i}}f_i\\
&=  f_i^{(m)}  f_ie_i - f_i^{(m)} \frac{q^{2k_i}-q^{2k_{i+1}}}{q^2-1} + \frac{q^2-1}{q^2}f_i^{(m)} f_{\overline{i}}e_{\overline{i}} - f_i^{(m-1)}\frac{q^{-m+1}q^{2k_i}-q^{m-1}q^{2k_{i+1}}}{q^2-1}f_i \\ &+ (1-q^{-2})\left(q^{m-1}f_{i}^{(m-1)}f_{\overline{i}}f_i + \frac{q^{2m-2}}{2}q^{k_i}F_{\overline{i}}f_i^{(m-2)}f_i\right)e_{\overline{i}}\\
&=  f_i^{(m)}  f_ie_i - f_i^{(m)} \frac{q^{2k_i}-q^{2k_{i+1}}}{q^2-1} - f_i^{(m-1)}f_i\frac{q^{-m-1}q^{2k_i}-q^{m+1}q^{2k_{i+1}}}{q^2-1} \\ &+ (1-q^{-2})\left(f_i^{(m)} f_{\overline{i}} + q^{m-1}f_{i}^{(m-1)}f_{\overline{i}}f_i + \frac{q^{2m-2}}{2}q^{k_i}F_{\overline{i}}f_i^{(m-2)}f_i\right)e_{\overline{i}}\\
&=  f_i^{(m)}  f_ie_i - f_i^{(m)} \frac{q^{2k_i}-q^{2k_{i+1}}}{q^2-1} - f_i^{(m-1)}f_i\frac{q^{-m-1}q^{2k_i}-q^{m+1}q^{2k_{i+1}}}{q^2-1} \\ &+ (1-q^{-2})\left(f_i^{(m)} f_{\overline{i}} + q^{m+1}f_{i}^{(m-1)}f_if_{\overline{i}} + q^{m}\frac{1+q^2}{2}f_{i}^{(m-1)}q^{k_i}F_{\overline{i}} + \frac{q^{2m-2}}{2}q^{k_i}F_{\overline{i}}f_i^{(m-2)}f_i\right)e_{\overline{i}}\\
&= [m+1]_qf_i^{(m+1)} e_i - f_i^{(m)} \frac{(1+q^{-m-1}[m]_q)q^{2k_i}-(1+q^{m+1}[m]_q)q^{2k_{i+1}}}{q^2-1} \\ &+ (1-q^{-2})\left((1+ q^{m-1}[m]_q)f_{i}^{(m)}f_{\overline{i}} + q^{3m-2}\frac{1+q^2}{2}q^{k_i}F_{\overline{i}} f_{i}^{(m-1)}+ \frac{q^{2m-2}[m-1]_q}{2}q^{k_i}F_{\overline{i}}f_i^{(m-1)}\right)e_{\overline{i}}.
\end{align*}
}
Dividing both sides by $[m+1]_q$ completes the proof of (a).

The proof of (b) follows in a similar way, with the only main difference that we use the relation
from Lemma \ref{lem:alg}(c).
\end{proof}
\end{lemma}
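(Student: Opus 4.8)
The plan is to prove Lemma \ref{lem:action} by a straightforward double induction on $m$, using the commutation relation in Lemma \ref{lem:alg}(c) (equivalently, the version in Proposition \ref{prop:alg}) as the engine, together with the auxiliary relations among $f_i, f_{\overline{i}}, F_{\overline{i}}$ and the $q^{k_j}$ recorded in Proposition \ref{prop:alg} and Lemma \ref{lem:alg}. For part (a), first I would establish the base cases $m=0$ (trivial), $m=1$ (this is exactly the relation $e_if_i - f_ie_i = -\frac{q^{2k_i}-q^{2k_{i+1}}}{q^2-1} + \frac{q^2-1}{q^2}f_{\overline{i}}e_{\overline{i}}$ from Proposition \ref{prop:alg}), and $m=2$ (multiply the $m=1$ relation on the right by $f_i$, then move $e_i$ past $f_i$ again using the $m=1$ relation, and use $f_if_{\overline{i}} + f_{\overline{i}}f_i = (1+q^2)f_if_{\overline{i}} + \tfrac{1+q^2}{2}q\,q^{k_i}F_{\overline{i}}$, which follows from $F_{\overline{i}}f_{\overline{i}} + q^{-1}f_{\overline{i}}F_{\overline{i}} = 0$ and $f_{\overline{i}}^2 = 0$; finally divide by $[2]_q$). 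The case $m=2$ has to be done separately because of the $f_i^{(m-2)}$-term, which is vacuous for $m\le 1$.

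For the inductive step of (a), assuming the formula for a given $m\ge 2$, I would multiply the identity $e_if_i^{(m)} = \cdots$ on the right by $f_i$ and then commute $e_i$ past the right-most $f_i$ using the $m=1$ relation again; then I would push $f_{\overline{i}}$ and $F_{\overline{i}}$ past $f_i$ where needed, using $q^hf_i = q^{-\alpha_i(h)}f_iq^h$, the relation for moving $F_{\overline{i}}$ past $f_i$ (namely $f_iF_{\overline{i}} = q^{-1}F_{\overline{i}}f_i - 2q^{k_{i+1}}f_{\overline{i}}$ or rather the $F_{\overline{i}}e_i/F_{\overline{i+1}}f_i$ relations — I would use the correct one), and $f_if_{\overline{i}} + f_{\overline{i}}f_i = (1+q^2)f_if_{\overline{i}} + \tfrac{1+q^2}{2}q\,q^{k_i}F_{\overline{i}}$. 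Collecting terms and recognizing the $q$-integer identities $1 + q^{-m-1}[m]_q = q^{-m-1}[m+1]_q$ (and its analogue $1 + q^{m+1}[m]_q = q^{m+1}[m+1]_q$, and $1 + q^{m-1}[m]_q = q^{m-1}[m+1]_q$, etc.), together with $[m+1]_q f_i^{(m+1)} = f_i^{(m)}f_i$, yields the claimed formula after dividing by $[m+1]_q$. The computation shown in the excerpt carries this out; I would simply present it and verify the $q$-integer bookkeeping at the end. Part (b) is entirely parallel: the base case is Lemma \ref{lem:alg}(c), and the inductive step multiplies $f_ie_i^{(m)} = \cdots$ on the right by $e_i$ and commutes using Lemma \ref{lem:alg}(c) and the relations moving $e_{\overline{i}}, F_{\overline{i+1}}$ past $e_i$ (the relations $qF_{\overline{i+1}}e_i = e_iF_{\overline{i+1}}$ and $F_{\overline{i+1}}e_{\overline{i}} + qe_{\overline{i}}F_{\overline{i+1}} = 2e_iq^{k_{i+1}}$, plus $e_{\overline{i}}^2 = 0$), again with the same type of $q$-integer collapse.

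The main obstacle is purely bookkeeping: keeping track of the exact powers of $q$ and of $q^{k_i}, q^{k_{i+1}}$ that appear when commuting $f_{\overline{i}}$ or $F_{\overline{i}}$ past a power of $f_i$, and matching them against the claimed coefficients $q^{-m+1}$, $q^{m-1}$, $q^{2m-2}$ in (a) (resp. $q^{m-1}$, $q^{-m+1}$ in (b)), so that the $q$-integer identities of the form $[m]_q q^{\pm(m+1)} + 1 = [m+1]_q q^{\pm(m+1)}$ can be applied cleanly. There is no conceptual difficulty: once the $m=1,2$ cases and the commutation rules are pinned down, the induction runs mechanically. I would therefore organize the write-up so that all the needed "move a generator past $f_i^{(k)}$" sub-identities are stated up front, and then the induction is a single (long) display chain as in the excerpt.
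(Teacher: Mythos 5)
Your proposal follows essentially the same route as the paper's proof: induction on $m$ with base cases $m=1,2$ obtained from the $e_if_i$-relation of Proposition \ref{prop:alg}, right-multiplication by $f_i$, the identity $f_{\overline{i}}f_i = q^2 f_i f_{\overline{i}} + \tfrac{q(1+q^2)}{2}q^{k_i}F_{\overline{i}}$ to collect terms, the same $q$-integer bookkeeping, and part (b) handled in parallel via Lemma \ref{lem:alg}(c). One small correction: that anticommutation identity does not follow from $F_{\overline{i}}f_{\overline{i}} + q^{-1}f_{\overline{i}}F_{\overline{i}} = 0$ and $f_{\overline{i}}^2 = 0$ (those relations live in the wrong $Q$-degree), but rather from the relation $q^{-1}f_{\overline{i+1}}f_{i+1} - qf_{i+1}f_{\overline{i+1}} = \tfrac{1+q^2}{2}q^{k_{i+1}}F_{\overline{i+1}}$ of Proposition \ref{prop:alg}, read with the index shifted.
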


\begin{proposition} \label{prop:irrhighweightfinitedim} Let $\lambda\in \Lambda^+$ and $V^q(\lambda)$ be  generated by a highest weight vector $v$. Then $f_i^{\lambda(k_i) - \lambda(k_{i+1})+1} v = 0$ for all $i\in I$.

\end{proposition}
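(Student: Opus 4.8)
The statement is the quantum analogue of the classical fact (used in Kac's theorem, cf. Theorem~\ref{thm:dominant-weight-fin-dim-mod} via Proposition~\ref{prop:irrhighweightfinitedimclassical}) that in an irreducible highest weight module the $\mfsl_2$-strings through the highest weight vector are finite when the weight is dominant. The plan is to reduce everything to the rank-one situation generated by $e_i, f_i$ and the toral element, by showing that the vector $w := f_i^{\lambda(k_i)-\lambda(k_{i+1})+1}v$ is a maximal vector, i.e. annihilated by all of $\mfU_q^+$, and hence generates a proper submodule, which must then be zero by irreducibility of $V^q(\lambda)$. Fix $i\in I$ and write $m := \lambda(k_i)-\lambda(k_{i+1})+1 \ge 1$.

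\textbf{Step 1: the $\mfsl_2$-triple computation.} First I would use Lemma~\ref{lem:action}(a) to compute $e_i f_i^{(m)} v$. Since $v$ is a highest weight vector, $e_i v = 0$, $e_{\overline i} v = 0$, $q^{2k_i} v = q^{2\lambda(k_i)} v$, and $q^{2k_{i+1}} v = q^{2\lambda(k_{i+1})} v$. The term $f_i^{(m)} e_i v$ vanishes, and the whole last summand of Lemma~\ref{lem:action}(a) is killed because it has a factor $e_{\overline i}$ on the right acting on $v$. What remains is
\begin{align*}
e_i f_i^{(m)} v = - f_i^{(m-1)} \frac{q^{-m+1} q^{2\lambda(k_i)} - q^{m-1} q^{2\lambda(k_{i+1})}}{q^2-1}\, v,
\end{align*}
and the numerator is $q^{2\lambda(k_{i+1})}\big(q^{-m+1+2(\lambda(k_i)-\lambda(k_{i+1}))} - q^{m-1}\big) = q^{2\lambda(k_{i+1})}(q^{m-1} - q^{m-1}) = 0$ by the choice $m = \lambda(k_i)-\lambda(k_{i+1})+1$. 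Hence $e_i f_i^{(m)} v = 0$, so (clearing the factor $1/[m]_q!$) $e_i w = 0$.

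\textbf{Step 2: the other raising generators annihilate $w$.} Next I must check $e_j w = 0$ for $j \ne i$ and $e_{\overline j} w = 0$ for all $j \in I$. For this I would commute each raising generator past the power $f_i^{m}$ using the relations of Proposition~\ref{prop:alg}. For $e_j$ with $|i-j|>1$ the relation $e_j f_i = f_i e_j$ moves $e_j$ directly onto $v$, giving $0$. For $j = i\pm 1$ one uses the mixed relations $e_{i+1}f_i - f_i e_{i+1} = f_{\overline i} e_{\overline{i+1}} + e_{\overline{i+1}} f_{\overline i}$ (and its analogue for $e_{i-1}$), together with the quantum Serre relations $q^{-1}e_j^2 e_{j\pm1} - (q+q^{-1})e_j e_{j\pm1} e_j + q e_{j\pm1} e_j^2 = 0$; since $e_{\overline j} v = 0$ and the correction terms produced all end in a raising generator or in $e_{\overline{\cdot}}$ hitting $v$, the strings collapse after finitely many steps (this is the standard Serre-relation argument, only bookkeeping-heavy because of the odd generators). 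Similarly $e_{\overline j}$ commutes with $f_i$ when $j \ne i, i\pm 1$ up to sign, and for the remaining indices one uses $e_{\overline{i+1}} e_i - e_i e_{\overline{i+1}} = f_{i+1} e_{\overline i} - e_{\overline i} f_{i+1}$ and $e_i f_i - f_i e_i = -\tfrac{q^{2k_i}-q^{2k_{i+1}}}{q^2-1} + \tfrac{q^2-1}{q^2} f_{\overline i} e_{\overline i}$ and $e_{\overline i}^2 = 0$ to push everything onto $v$. Because $\mfU_q^+$ is generated by the $e_j$ and $e_{\overline j}$ (this is the definition of $\mfU_q^+$ preceding Lemma~\ref{lem:tridecomp}), these checks suffice to conclude $\mfU_q^+ w = 0$.

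\textbf{Step 3: conclusion.} If $w \ne 0$, then $w$ is a maximal vector of weight $\lambda - m\alpha_i \ne \lambda$, so $\mfU_q\mfp_n\, w = \mfU_q^-\, w$ (using Theorem~\ref{thm:tridecomp}) is a nonzero submodule of $V^q(\lambda)$ not containing $v$ — indeed a proper submodule since its weights all lie in $\lambda - m\alpha_i + Q_-$, which does not contain $\lambda$. This contradicts the irreducibility of $V^q(\lambda)$. Therefore $w = 0$, i.e. $f_i^{\lambda(k_i)-\lambda(k_{i+1})+1} v = 0$, as claimed.

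\textbf{Expected main obstacle.} Step~1 is a clean one-line computation once Lemma~\ref{lem:action}(a) is in hand. The real work is Step~2: showing that every raising generator kills $w$. In the classical $\mfp_n$ case this is Proposition~\ref{prop:irrhighweightfinitedimclassical}'s underlying input; quantum-mechanically the commutation of $e_j$ and $e_{\overline j}$ past $f_i^m$ generates a cascade of lower-order terms, and one has to organize the induction on the power of $f_i$ (or invoke a quantum-Serre-relation lemma) carefully while tracking the signs coming from the odd generators $e_{\overline i}, f_{\overline i}, F_{\overline j}$. I expect that the cleanest route is to first record, as a preliminary lemma, that $\mfU_q^+$ normalizes the left ideal structure appropriately — or more concretely that $e_j f_i^{(k)}$ and $e_{\overline j} f_i^{(k)}$ expand in terms with a raising operator on the right plus terms of strictly lower $f_i$-degree — and then run a single induction. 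That bookkeeping, rather than any conceptual difficulty, is where the proof will spend its length.
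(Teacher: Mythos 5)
Your overall strategy is exactly the paper's: show that $w := f_i^{\lambda(k_i)-\lambda(k_{i+1})+1}v$ is annihilated by all of $\mfU_q^+$, so that $w\neq 0$ would generate a proper nonzero submodule, contradicting irreducibility. Your Step~1 ($e_i w = 0$ via Lemma~\ref{lem:action}(a)) and Step~3 are correct and match the paper. The gap is in Step~2, and it is not just unfinished bookkeeping — the plan as written would not go through. First, the relation you quote, $e_{i+1}f_i - f_i e_{i+1} = f_{\overline i}e_{\overline{i+1}} + e_{\overline{i+1}}f_{\overline i}$, is not a relation of $\mfU_q\mfp_n$; the actual relations of that shape involve only $f$'s (or only $e$'s) on the left-hand side. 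In fact $e_{i+1}$ commutes with $f_i$ outright, by the relation $e_if_j = f_je_i$ for $j\neq i,i+1$ (read with the $e$-index $i+1$ and the $f$-index $i$): the only raising generators that fail to commute with $f_i$ are $e_i$, $e_{i-1}$, and $e_{\overline{i-1}}$. So you have misidentified which cases need work. Second, the quantum Serre relations are of no help here: they are relations among the $e$'s alone and cannot be used to move $e_{i-1}$ past $f_i^m$.

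The case that actually carries the weight is $j=i-1$. One needs the derived commutator $e_{i-1}f_i = f_ie_{i-1} - \tfrac{1-q^{-2}}{2q}\,q^{-k_i}F_{\overline{i}}\bigl(e_{\overline{i}}e_{i-1}-e_{i-1}e_{\overline{i}}\bigr)$ (the last family of relations in Proposition~\ref{prop:alg}, with indices shifted), and then an induction on $m$ showing $e_{i-1}f_i^m v = 0$: the correction term always ends in $e_{\overline{i}}$ or $e_{i-1}$ applied to $f_i^{m-1}v$, which vanishes by the inductive hypothesis and the fact that $e_{\overline{i}}$ commutes with $f_i$. A second induction, using $e_{\overline{i-1}}f_i - f_ie_{\overline{i-1}} = e_{i-1}e_{\overline{i}} - e_{\overline{i}}e_{i-1}$ together with the already-established $e_{i-1}f_i^m v=0$, gives $e_{\overline{i-1}}f_i^m v = 0$. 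Without these two inductions your Step~2 is an assertion rather than a proof, and the tools you name would not produce them.
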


\begin{proof} 
If $f_i^{\lambda(k_i) - \lambda(k_{i+1})+1} v \neq 0$ and $\mfU_q^+f_i^{\lambda(k_i) - \lambda(k_{i+1})+1} v = 0$, then $f_i^{\lambda(k_i) - \lambda(k_{i+1})+1} v $ generates a nontrivial proper submodule of $V^q(\lambda)$, which would be a contradiction to the fact that $V^q(\lambda)$ is irreducible. Therefore, it is enough to show that $\mfU_q^+f_i^{\lambda(k_i) - \lambda(k_{i+1})+1} v = 0$.

Note that for $j\neq i-1,i$, we have that
\begin{align*}
e_jf_i^{\lambda(k_i) - \lambda(k_{i+1})+1}v = f_i^{\lambda(k_i) - \lambda(k_{i+1})+1}e_j v = 0.
\end{align*} 

We next prove that  $e_jf_i^{\lambda(k_i) - \lambda(k_{i+1})+1}v = 0$ for $j=i,i-1$. In the case $j=i$, we use Lemma \ref{lem:action}(a) for $m= \lambda(k_i) - \lambda(k_{i+1})+1$:\begin{align*}
e_if_i^{(m)}v = \frac{q^{m-1+2\lambda(k_{i+1})}-q^{-m+1+2\lambda(k_i)}}{q^2-1} f_i^{(m-1)}v =0.
\end{align*}

For $j=i-1$ we apply induction on $m$. The base case $m=1$ follows from 
\begin{align*}
e_{i-1}f_iv =f_ie_{i-1}v - \frac{1-q^{-2}}{2q}q^{-k_{i}}F_{\overline{i}}(e_{\overline{i}}e_{i-1} - e_{i-1}e_{\overline{i}}) v = 0.
\end{align*}
Assume that  $e_{i-1}f_i^{ m} v =0$. Then we have 
\begin{align*}
e_{i-1}f_i^{ m+1} v &= f_ie_{i-1}f_i^{ m} v - \frac{1-q^{-2}}{2q}q^{-k_{i}}F_{\overline{i}}(e_{\overline{i}}e_{i-1} - e_{i-1}e_{\overline{i}})f_i^{ m} v \\
&=  f_ie_{i-1}f_i^{ m} v - \frac{1-q^{-2}}{2q}q^{-k_{i}}F_{\overline{i}}e_{\overline{i}}e_{i-1}f_i^{ m} v  + \frac{1-q^{-2}}{2q}q^{-k_{i}}F_{\overline{i}}e_{i-1}f_i^{ m} e_{\overline{i}}v \\
&=  \left(f_i- \frac{1-q^{-2}}{2q}q^{-k_{i}}F_{\overline{i}}e_{\overline{i}}\right)e_{i-1}f_i^{ m} v =0.
\end{align*}
Therefore, $e_jf_i^{ \lambda(k_i) - \lambda(k_{i+1})+1}v  = 0$ for all $j\in I$.

Lastly, we prove that  $e_{\overline{j}}f_i^{\lambda(k_i) - \lambda(k_{i+1})+1}v = 0$ for all $j \in I$. Note that for $j\neq i-1$,
\begin{align*}
e_{\overline{j}}f_i^{ \lambda(k_i) - \lambda(k_{i+1})+1}v = f_i^{ \lambda(k_i) - \lambda(k_{i+1})+1}e_{\overline{j}}v = 0.
\end{align*}

For $j = i-1$, using that $e_{i-1}f_i^{ m} v = 0$, we have
\begin{align*}
e_{\overline{i-1}}f_{i}^{m+1}v = f_ie_{\overline{i-1}}f_i^mv - e_{\overline{i}}e_{i-1}f_i^mv + e_{i-1}e_{\overline{i}}f_i^mv =  f_ie_{\overline{i-1}}f_i^mv.
\end{align*}
Thus we obtain $e_{\overline{i-1}}f_i^mv  = 0$ by induction on $m$. Hence, $\mfU_q^+f_i^{\lambda(k_i) - \lambda(k_{i+1})+1} v = 0$ and the proof is complete.\end{proof}

\section{Classical Limits}\label{sec:classical-limit}

Let $\mbA_1$ be the localization of $\C[q]$ at the ideal generated by $q-1$. Namely,
\begin{align*}
\mbA_1 = \{f(q)\in \C(q) \mid f \text{ is regular at } q=1\}.
\end{align*}

For an integer $n\in \Z$, we define
\begin{align*}
[x;n]_y \coloneqq \frac{xy^n - x^{-1}y^{-n}}{y-y^{-1}}, \hspace{30pt}   (x;n)_y \coloneqq \frac{xy^n - 1}{y-1}.
\end{align*}
In particular, $[q^h;0]_q , (q^h;0)_q \in \mfU_q^0$.

\begin{definition} The $\mbA_1$-form of $\mfU_q\mfp_n$, denoted by $\mfU_{\mbA_1}$,  is the $\mbA_1$-subalgebra of $\mfU_q\mfp_n$  generated by  the elements $e_i, f_i, e_{\overline{i}}, f_{\overline{i}}$ for $i\in I$, $F_{\overline{j}}$ for $j\in I \cup \{ n\}$, and $q^h$, $(q^h;0)_q$ for $h\in P^{\vee}$.
\end{definition}

Let $\mfU_{\mbA_1}^+$ (respectively, $\mfU_{\mbA_1}^-$), be the $\mbA_1$-subalgebra of $\mfU_{\mbA_1}$ generated by the elements $e_i$ and $e_{\overline{i}}$ for $i\in I$ (respectively, $f_i$, $f_{\overline{i}}$, and $F_{\overline{j}}$ for $i\in I$ and $j\in I \cup \{ n\}$). Let $\mfU_{\mbA_1}^0$ be the $\mbA_1$-subalgebra of $\mfU_{\mbA_1}$ generated by $q^h$ and $(q^h;0)_q$ for $h\in P^\vee$.

We will show that the triangular decomposition of $\mfU_q\mfp_n$ carries over to its $\mbA_1$-form. For this we first use the following lemma, whose proof is identical to the one of Lemma 5.2 in \cite{GJKK}.

\begin{lemma} \label{lem:A1tridecomp} \hspace{2pt} \\ \vspace{-15pt}
\begin{itemize}
\item[(a)] $(q^h;n)_q \in \mfU_{\mbA_1}^0$ for all $n \in \Z$ and $h\in P^\vee$.
\item[(b)] $[q^h;0]_q \in \mfU_{\mbA_1}^0$ for all $h\in P^\vee$.
\end{itemize}
\end{lemma}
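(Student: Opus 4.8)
The plan is to prove both parts by elementary manipulation inside the subalgebra $\mfU_q^0$, which is commutative (the relations $q^0 = 1$, $q^{h_1+h_2} = q^{h_1}q^{h_2}$ identify it with the group algebra of the lattice $P^\vee$), reducing each expression to a sum of products of the listed generators $q^h$ and $(q^h;0)_q$ with coefficients that visibly lie in $\mbA_1$. Two preliminary observations will be used throughout: first, since $P^\vee$ is a lattice, $q^{-h}$ is itself one of the generators $q^{h'}$ ($h'\in P^\vee$) of $\mfU_{\mbA_1}^0$; second, $\mbA_1$ consists of all rational functions in $q$ with no pole at $q = 1$, so in particular every Laurent polynomial in $q$ lies in $\mbA_1$.

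For part (a), the idea is to split off the $(q^h;0)_q$ term:
\begin{align*}
(q^h;n)_q = \frac{q^hq^n - 1}{q - 1} = q^h\cdot\frac{q^n - 1}{q - 1} + \frac{q^h - 1}{q - 1} = \frac{q^n - 1}{q - 1}\,q^h + (q^h;0)_q .
\end{align*}
Since $\frac{q^n-1}{q-1}$ equals $1 + q + \cdots + q^{n-1}$ for $n \geq 0$ and $-q^{n}(1 + q + \cdots + q^{-n-1})$ for $n < 0$, it is a Laurent polynomial and hence belongs to $\mbA_1$. As $q^h, (q^h;0)_q \in \mfU_{\mbA_1}^0$ and $\mfU_{\mbA_1}^0$ is an $\mbA_1$-subalgebra, the right-hand side lies in $\mfU_{\mbA_1}^0$.

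For part (b), the key point is the factorization $q^h - q^{-h} = (q^h - 1)(1 + q^{-h})$, which yields
\begin{align*}
[q^h;0]_q = \frac{q^h - q^{-h}}{q - q^{-1}} = \frac{q^h - 1}{q - 1}\cdot(1 + q^{-h})\cdot\frac{q - 1}{q - q^{-1}} = (q^h;0)_q\,(1 + q^{-h})\,\frac{q}{q + 1}.
\end{align*}
The scalar $\frac{q-1}{q-q^{-1}} = \frac{q}{q+1}$ is regular at $q = 1$ (its value there is $\tfrac12$), so it lies in $\mbA_1$, while $(q^h;0)_q$ and $1 + q^{-h}$ lie in $\mfU_{\mbA_1}^0$ by the remarks above; hence $[q^h;0]_q \in \mfU_{\mbA_1}^0$. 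This is precisely the argument of Lemma 5.2 in \cite{GJKK}, and there is no real obstacle: the only points needing care are verifying that the coefficients $\frac{q^n-1}{q-1}$ and $\frac{q}{q+1}$ are genuinely regular at $q=1$, and that negative powers $q^{-h}$ count among the generators of $\mfU_{\mbA_1}^0$.
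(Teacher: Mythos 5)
Your proof is correct, and it is essentially the argument the paper invokes: the paper does not write out a proof but defers to Lemma 5.2 of \cite{GJKK}, whose content is exactly this elementary manipulation — splitting $(q^h;n)_q$ into $q^h$ and $(q^h;0)_q$ with a coefficient $\tfrac{q^n-1}{q-1}\in\mbA_1$, and writing $[q^h;0]_q = (q^h;0)_q(1+q^{-h})\tfrac{q}{q+1}$ with $\tfrac{q}{q+1}$ regular at $q=1$. Nothing further is needed.
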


\begin{proposition} \label{prop:A1tridecomp} The triangular decomposition of $\mfU_q\mfp_n$ in Theorem \ref{thm:tridecomp} induces  an isomorphism of $\mbA_1$-modules
\begin{align*}
\mfU_{\mbA_1} \cong \mfU_{\mbA_1}^-\otimes \mfU_{\mbA_1}^0 \otimes \mfU_{\mbA_1}^+.
\end{align*}
\end{proposition}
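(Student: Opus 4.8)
The strategy is to lift the $\C(q)$-linear triangular decomposition of Theorem \ref{thm:tridecomp} to the $\mbA_1$-lattice $\mfU_{\mbA_1}$ by checking that the multiplication map $m \colon \mfU_{\mbA_1}^- \otimes_{\mbA_1} \mfU_{\mbA_1}^0 \otimes_{\mbA_1} \mfU_{\mbA_1}^+ \to \mfU_{\mbA_1}$ is both injective and surjective. Injectivity is immediate: $m$ is the restriction of the $\C(q)$-linear isomorphism of Theorem \ref{thm:tridecomp}, so no new argument is needed once we know the source sits inside $\mfU_q^- \otimes \mfU_q^0 \otimes \mfU_q^+$ compatibly. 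The real content is surjectivity, i.e.\ showing that every generator of $\mfU_{\mbA_1}$, and then every product of generators, lies in the image $\mfU_{\mbA_1}^-\,\mfU_{\mbA_1}^0\,\mfU_{\mbA_1}^+$.

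First I would fix the monomial bases $\{f_\zeta\}$, $\{e_\tau\}$ of $\mfU_q^-$, $\mfU_q^+$ from the proof of Theorem \ref{thm:tridecomp}; these same monomials (being products of the integral generators $f_i, f_{\overline i}, F_{\overline j}$ resp.\ $e_i, e_{\overline i}$) manifestly lie in $\mfU_{\mbA_1}^\pm$ and, by an argument parallel to Lemma \ref{lem:tridecomp} and Theorem \ref{thm:tridecomp} but over $\mbA_1$, I claim they are $\mbA_1$-bases of $\mfU_{\mbA_1}^\pm$ and that $\{f_\zeta q^h \mid \zeta, h\}$ is an $\mbA_1$-basis of $\mfU_{\mbA_1}^{\leq 0}$; for the Cartan part I would invoke Lemma \ref{lem:A1tridecomp} to see that $\mfU_{\mbA_1}^0$ is spanned over $\mbA_1$ by the $q^h$ together with the elements $(q^h;n)_q$, $[q^h;n]_q$, and that these can in turn be written as $\mbA_1$-combinations of the $q^h$ (since $(q^h;n)_q = \frac{q^{h'}q^n-1}{q-1}$ with $q-1$ invertible nowhere we nevertheless get $\mbA_1$-coefficients because the relevant linear systems have determinants that are units in $\mbA_1$). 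The key reduction step is then: using the defining relations of Proposition \ref{prop:alg}, move every $e$-type generator past every $f$-type or Cartan generator to the right. Each such relation, when one tracks the coefficients, has the form $e \cdot x = x \cdot e + (\text{terms with $\mbA_1$-coefficients involving lower-degree elements})$ — crucially the potentially dangerous denominators $q^2-1$ appearing in relations such as $e_if_i - f_ie_i = -\frac{q^{2k_i}-q^{2k_{i+1}}}{q^2-1} + \frac{q^2-1}{q^2}f_{\overline i}e_{\overline i}$ are absorbed: $\frac{q^{2k_i}-q^{2k_{i+1}}}{q^2-1} = q^{2k_{i+1}}(q^{2(k_i-k_{i+1})};0)_{q^2}\cdot\frac{q^2-1}{q-1}\cdot(\text{unit})$, hence lies in $\mfU_{\mbA_1}^0$ by Lemma \ref{lem:A1tridecomp}. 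An induction on the $Q$-degree (or on word length) then shows every monomial in $\mfU_{\mbA_1}$ is an $\mbA_1$-combination of $f_\zeta q^h e_\tau$, giving surjectivity.

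Concretely the steps are: (i) verify $\mfU_{\mbA_1}^\pm$ are free $\mbA_1$-modules on the monomial bases $\{f_\zeta\},\{e_\tau\}$, and establish the $\mbA_1$-version of Lemma \ref{lem:tridecomp}, namely $\mfU_{\mbA_1}^{\leq 0}\cong \mfU_{\mbA_1}^-\otimes_{\mbA_1}\mfU_{\mbA_1}^0$ and $\mfU_{\mbA_1}^{\geq 0}\cong \mfU_{\mbA_1}^0\otimes_{\mbA_1}\mfU_{\mbA_1}^+$ (the proofs are the $\mbA_1$-transcriptions of the given ones, using that the comultiplication preserves $\mfU_{\mbA_1}$ — check this on generators via Lemma \ref{lem:comult}, noting $\frac{q-q^{-1}}{2}\in\mbA_1$); (ii) prove every generator-commutation relation of Proposition \ref{prop:alg} has $\mbA_1$-coefficients after clearing the apparent $q^2-1$ denominators using Lemma \ref{lem:A1tridecomp}, so that the straightening procedure of Theorem \ref{thm:tridecomp} stays within $\mfU_{\mbA_1}$; (iii) conclude $m$ is surjective, hence (being the restriction of an isomorphism) an isomorphism of $\mbA_1$-modules.

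\textbf{Main obstacle.} The delicate point is step (ii): one must check that \emph{every} coefficient produced during the straightening — in particular the Cartan-valued terms coming from relations like Lemma \ref{lem:alg}(c), Lemma \ref{lem:action}, and the $F_{\overline i}$-relations $F_{\overline i}e_i - qe_iF_{\overline i} = -2f_{\overline i}q^{k_i}$ — actually lands in $\mfU_{\mbA_1}$, i.e.\ that no net factor of $(q-1)^{-1}$ survives. This is where Lemma \ref{lem:A1tridecomp}(a),(b) does the work, rewriting each occurrence of $\frac{q^{2k_i}-q^{2k_{i+1}}}{q^2-1}$ and its divided-power analogues in terms of the admissible elements $(q^h;n)_q$. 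Once one is confident these finitely many coefficient identities hold, the rest is a routine transcription of the $\C(q)$ arguments.
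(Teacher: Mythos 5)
Your argument is essentially the paper's: the paper restricts the isomorphism of Theorem \ref{thm:tridecomp} to $\mfU_{\mbA_1}$ and checks, via a list of straightening relations whose Cartan-valued terms are rewritten using $(q^h;n)_q$ and $[q^h;0]_q$ (Lemma \ref{lem:A1tridecomp}), that the image lands in $\mfU_{\mbA_1}^-\otimes\mfU_{\mbA_1}^0\otimes\mfU_{\mbA_1}^+$, with multiplication giving the inverse — precisely your steps (ii)--(iii), so the core of your proposal is correct and matches the paper. One caveat: the parenthetical claim in your step (i) that $(q^h;n)_q$ and $[q^h;n]_q$ can be written as $\mbA_1$-combinations of the $q^{h'}$ is false (their coefficients in the $\C(q)$-basis $\{q^{h'}\}$ have a pole at $q=1$, which is exactly why $(q^h;0)_q$ is adjoined as a generator of $\mfU_{\mbA_1}^0$), but nothing in the proof requires it — Lemma \ref{lem:A1tridecomp} is only needed in the opposite direction, to show these elements lie in $\mfU_{\mbA_1}^0$.
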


\begin{proof} Consider the  isomorphism $\varphi : \mfU_q\mfp_n \longrightarrow \mfU_q^-\otimes \mfU_q^0\otimes \mfU_q^+$ from Theorem \ref{thm:tridecomp}. Note that the following relations hold:
\begin{align*}
&e_i(q_h;0)_q = (q^h;-\alpha_i(h))_qe_i && e_{\overline{i}}(q_h;0)_q = (q^h;-\gamma_i(h))_qe_{\overline{i}} \\
&(q_h;0)_qf_i = f_i(q^h;-\alpha_i(h))_q && (q_h;0)_qf_{\overline{i}}  = f_{\overline{i}} (q^h;-\gamma_i(h))_q\\
&(q_h;0)_qF_{\overline{i}} = F_i(q^h;-\beta(h))_q \\
& e_if_i = f_ie_i - q^{-1}q^{k_i + k_{i+1}}\left[ q^{k_i-k_{i+1}} ; 0\right]_q + \frac{q^2-1}{q^2}f_{\overline{i}}e_{\overline{i}}\\
&e_if_{i+1} = f_{i+1}e_i - \frac{q-q^{-1}}{2}F_{\overline{i+1}}q^{-k_{i+1}}(e_{\overline{i+1}}e_i-e_ie_{\overline{i+1}}) && e_if_j = f_je_i \text{ for } j \neq i, i+1\\
&e_if_{\overline{i}} = q^{-1}f_{\overline{i}}e_i + \frac{q^2+1}{2q^3}F_{\overline{i+1}}q^{k_{i+1}} && e_{i+1}f_{\overline{i}} = f_{\overline{i}}e_{i+1} + f_{\overline{i+1}}f_i - f_if_{\overline{i+1}} \\
&e_if_{\overline{i+1}} = f_{\overline{i+1}}e_i + \frac{q-q^{-1}}{2}F_{\overline{i+1}}q^{-k_{i+1}}(e_{{i+1}}e_i-e_ie_{{i+1}}) && e_if_{\overline{j}} = f_{\overline{j}}e_i \text{ for } |i-j| > 1 \\
&e_{\overline{i}}f_{i+1} = f_{i+1}e_{\overline{i}}-e_{\overline{i+1}}e_i + e_ie_{\overline{i+1}} && e_{\overline{i}}f_j = f_je_{\overline{i}} \text{ for } j\neq i+1\\
& e_{\overline{i}}f_{\overline{i}} = -q^{-2}f_{\overline{i}}e_{\overline{i}} - q^{-1}q^{k_i + k_{i+1}}\left[ q^{k_i-k_{i+1}} ; 0\right]_q && e_{\overline{i}}f_{\overline{i+1}} = -f_{\overline{i+1}}e_{\overline{i}} + e_{i+1}e_i - e_ie_{i+1}\\
&e_{\overline{i+1}}f_{\overline{i}} = -f_{\overline{i}}e_{\overline{i+1}} + f_{i+1}f_i - f_if_{i+1} && e_{\overline{i}}f_{\overline{j}} =f_{\overline{j}}e_{\overline{i}} \text{ for } |i-j| > 1 \\
&e_iF_{\overline{i}} = q^{-1}F_{\overline{i}}e_i + 2q^{-1}f_{\overline{i}}q^{k_i} &&e_iF_{\overline{i+1}} = qF_{\overline{i+1}}e_i \\
&e_{\overline{i}}F_{\overline{i}} = -q^{-1}F_{\overline{i}}e_{\overline{i}} + 2q^{-1}f_{{i}}q^{k_i} && e_{\overline{i}}F_{\overline{i+1}} = -q^{-1}F_{\overline{i+1}}e_{\overline{i}} + 2q^{k_{i+1}}e_{{i}} \\
&e_iF_{\overline{j}} = F_{\overline{j}}e_i \text{ for } j \neq i, i+1 && e_{\overline{i}}F_{\overline{j}} = F_{\overline{j}}e_{\overline{i}} \text{ for } j \neq i, i+1
\end{align*}

These relations, together with Lemma \ref{lem:A1tridecomp}, imply that the image of the restriction $\overline{\varphi}$
of $\varphi$ to $\mfU_{\mbA_1}$ is a subset of $\mfU_{\mbA_1}^-\otimes \mfU_{\mbA_1}^0 \otimes \mfU_{\mbA_1}^+$. To define an inverse of $\overline{\varphi}$ we multiply the corresponding terms in the tensor product. This completes the proof. \end{proof}

\begin{definition}
The $\mbA_1$-form of the highest weight module $V^q$ with highest weight $\lambda \in P$ and highest weight vector $v$ is the $\mfU_{\mbA_1}$-module $V_{\mbA_1} =  \mfU_{\mbA_1}v$.
\end{definition}

For the rest of the section, by  $V^q$  we denote a highest weight module over $\mfU_q\mfp_n$ with highest weight $\lambda\in P$ and highest weight vector $v$. We can strengthen the above definition with the following proposition:

\begin{proposition} \label{prop:highestweightV} With the notation as above:
\begin{align*}
V_{\mbA_1} =  \mfU_{\mbA_1}^- v.
\end{align*}
\end{proposition}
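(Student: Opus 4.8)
The statement to prove is that $V_{\mbA_1} = \mfU_{\mbA_1}^- v$, i.e. that the $\mbA_1$-form of a highest weight module is generated over the negative $\mbA_1$-subalgebra alone. The strategy is to reduce this to the triangular decomposition of $\mfU_{\mbA_1}$ established in Proposition \ref{prop:A1tridecomp}, exactly paralleling how the statement "$V^q = \mfU_q^- v$" for the full module was deduced from Theorem \ref{thm:tridecomp}. First I would note the inclusion $\mfU_{\mbA_1}^- v \subseteq \mfU_{\mbA_1} v = V_{\mbA_1}$ is immediate since $\mfU_{\mbA_1}^- \subseteq \mfU_{\mbA_1}$. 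For the reverse inclusion, take an arbitrary element $u v \in V_{\mbA_1}$ with $u\in \mfU_{\mbA_1}$. By Proposition \ref{prop:A1tridecomp}, $u$ can be written as an $\mbA_1$-linear combination of monomials of the form $f_\zeta\, \eta\, e_\tau$, where $f_\zeta$ is a monomial in $\mfU_{\mbA_1}^-$, $\eta$ lies in $\mfU_{\mbA_1}^0$, and $e_\tau$ is a monomial in $\mfU_{\mbA_1}^+$.

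Now I would apply each such monomial to $v$. Since $v$ is a maximal vector, $e_i v = e_{\overline{i}} v = 0$ for all $i\in I$, so any monomial $e_\tau$ of positive degree kills $v$; only the constant term survives, giving $e_\tau v \in \mbA_1 v$. Next, $\eta \in \mfU_{\mbA_1}^0$ acts on the weight vector $v$: here I must check that the generators $q^h$ and $(q^h;0)_q$ of $\mfU_{\mbA_1}^0$ act on $v$ by scalars lying in $\mbA_1$. We have $q^h v = q^{\lambda(h)} v$ with $q^{\lambda(h)} \in \mbA_1$ (it is regular at $q=1$, being a Laurent monomial in $q$), and $(q^h;0)_q v = \frac{q^{\lambda(h)}-1}{q-1} v$, whose scalar is also regular at $q=1$ and hence lies in $\mbA_1$; more generally by Lemma \ref{lem:A1tridecomp} every element of $\mfU_{\mbA_1}^0$ is a polynomial in these, so acts on $v$ by an $\mbA_1$-scalar. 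Therefore $\eta e_\tau v \in \mbA_1 v$, and $f_\zeta \eta e_\tau v \in \mbA_1 f_\zeta v \subseteq \mfU_{\mbA_1}^- v$. Summing over the monomials in the expansion of $u$ shows $u v \in \mfU_{\mbA_1}^- v$, completing the argument.

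The one genuine subtlety — the step I would flag as the main obstacle — is the passage "every element of $\mfU_{\mbA_1}^0$ acts on $v$ by an element of $\mbA_1$." This is not automatic from the $\C(q)$-level statement, because a priori an element of $\mfU_{\mbA_1}^0$ could act by a scalar in $\C(q)$ with a pole at $q=1$; one must use that $\mfU_{\mbA_1}^0$ is generated by $q^h$ and $(q^h;0)_q$, whose eigenvalues on $v$ are manifestly in $\mbA_1$, together with the fact (Lemma \ref{lem:A1tridecomp}) that $(q^h;n)_q$ and $[q^h;0]_q$ also lie in $\mfU_{\mbA_1}^0$, so no "division by $q-1$" beyond what $\mbA_1$ already permits is ever needed. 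Once this is in hand, the rest is the standard PBW-style collapse of the positive and Cartan parts onto the highest weight vector, and the proof is short.
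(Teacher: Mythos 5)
Your proof is correct and follows essentially the same route as the paper: invoke the triangular decomposition of $\mfU_{\mbA_1}$ from Proposition \ref{prop:A1tridecomp}, kill the positive part using that $v$ is a maximal vector, and observe that the generators $q^h$ and $(q^h;0)_q$ of $\mfU_{\mbA_1}^0$ act on $v$ by the scalars $q^{\lambda(h)}$ and $\frac{q^{\lambda(h)}-1}{q-1}$, which lie in $\mbA_1$, so that $\mfU_{\mbA_1}^0 v = \mbA_1 v$. Your extra care about why all of $\mfU_{\mbA_1}^0$ (not just its generators) acts by $\mbA_1$-scalars is a reasonable elaboration of the same argument the paper gives more tersely.
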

 
\begin{proof}
In the light of Proposition \ref{prop:A1tridecomp}, it suffices to show that $ \mfU_{\mbA_1}^+ v = 0$ and $ \mfU_{\mbA_1}^0 v = \mbA_1v$. The former identity follows from the fact that $v$ is a highest weight vector. For the latter identity we use following:
\begin{align*}
q^hv = q^{\lambda(h)}v, \hspace{20pt} (q^h;0)_qv = \frac{q^{\lambda(h)}-1}{q-1}v.
\end{align*} \end{proof}

For each $\mu\in P$, we set $(V_{\mbA_1})_\mu = V_{\mbA_1}\cap V_\mu^q$. The following shows that the weight space decomposition of $V^q$ carries over to $V_{\mbA_1}$.
\begin{proposition}\label{prop:a1wsdecomp} $V_{\mbA_1}$ has the weight space decomposition $V_{\mbA_1} = \bigoplus\limits_{\mu\leq \lambda} (V_{\mbA_1})_\mu$. \end{proposition}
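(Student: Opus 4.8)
The plan is to show that $V_{\mbA_1} = \mfU_{\mbA_1}^- v$ (Proposition~\ref{prop:highestweightV}) is compatible with the weight grading inherited from $V^q$. First I would observe that, since $V^q$ is a highest weight module, $V^q = \bigoplus_{\mu \le \lambda} V_\mu^q$ with $V_\lambda^q = \C(q)v$; this follows from $V^q = \mfU_q^- v$ together with the $Q$-grading of $\mfU_q\mfp_n$, because every monomial in $f_i, f_{\overline i}, F_{\overline j}$ has degree in $Q_-$, and $q^h$ acts on $f_\zeta v$ by the scalar $q^{(\lambda + \deg f_\zeta)(h)}$. Thus each weight space $V_\mu^q$ is spanned over $\C(q)$ by the vectors $f_\zeta v$ with $\deg f_\zeta = \mu - \lambda$, where $\{f_\zeta\}$ is the monomial basis of $\mfU_q^-$ from Lemma~\ref{lem:tridecomp}.

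\textbf{Spanning.} Since $\mfU_{\mbA_1}^-$ is generated over $\mbA_1$ by $e_i$'s image under... — more precisely by $f_i, f_{\overline i}, F_{\overline j}$ — every element of $\mfU_{\mbA_1}^-$ is an $\mbA_1$-linear combination of the monomials $f_\zeta$, each of which is homogeneous for the $Q$-grading. Hence $V_{\mbA_1} = \mfU_{\mbA_1}^- v$ is spanned over $\mbA_1$ by the homogeneous vectors $f_\zeta v$, and collecting these by degree gives $V_{\mbA_1} = \sum_{\mu \le \lambda}(V_{\mbA_1})_\mu$, where $(V_{\mbA_1})_\mu$ contains the $\mbA_1$-span of $\{f_\zeta v : \deg f_\zeta = \mu - \lambda\}$.

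\textbf{Directness.} The sum is direct because it is the restriction of the direct sum $V^q = \bigoplus_{\mu \le \lambda} V_\mu^q$: if $\sum_\mu w_\mu = 0$ with $w_\mu \in (V_{\mbA_1})_\mu \subseteq V_\mu^q$, then each $w_\mu = 0$ already in $V^q$. It remains only to check that $(V_{\mbA_1})_\mu := V_{\mbA_1} \cap V_\mu^q$ equals the $\mbA_1$-span of $\{f_\zeta v : \deg f_\zeta = \mu - \lambda\}$, i.e. that the decomposition we built exhausts each intersection. This is automatic: given $x \in V_{\mbA_1} \cap V_\mu^q$, write $x = \sum_\zeta c_\zeta f_\zeta v$ with $c_\zeta \in \mbA_1$ using the spanning statement; projecting onto the $\mu$-weight space of $V^q$ kills all terms with $\deg f_\zeta \ne \mu - \lambda$, and by the $\C(q)$-linear independence of $\{f_\zeta v\}$ (Proposition~\ref{prop-verma}(a) applied to the Verma cover, or directly Theorem~\ref{thm:tridecomp}) those coefficients must vanish, so $x$ already lies in the $\mbA_1$-span of the degree-$(\mu-\lambda)$ monomials.

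\textbf{Main obstacle.} The only genuinely substantive point is the compatibility of $\mfU_{\mbA_1}^-$ with the monomial basis — i.e. that passing to the $\mbA_1$-form does not destroy the linear independence of the $f_\zeta v$ over $\mbA_1$; this is where one invokes that $\mbA_1 \subseteq \C(q)$ is a (flat) subring and the $f_\zeta v$ are $\C(q)$-independent, so any $\mbA_1$-relation is a $\C(q)$-relation and hence trivial. Everything else is bookkeeping with the $Q$-grading, so I expect this proof to be short. The statement then follows by combining the spanning, directness, and exhaustion claims.
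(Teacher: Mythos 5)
Your argument is correct, but it proceeds differently from the paper. You deduce the decomposition from Proposition \ref{prop:highestweightV}: since $V_{\mbA_1}=\mfU_{\mbA_1}^-v$ and $\mfU_{\mbA_1}^-$ is $\mbA_1$-spanned by homogeneous words in the generators $f_i, f_{\overline{i}}, F_{\overline{j}}$, every element of $V_{\mbA_1}$ is an $\mbA_1$-combination of weight vectors of weight $\leq\lambda$ lying in $V_{\mbA_1}$, and directness is inherited from the weight decomposition of $V^q$. The paper instead takes an arbitrary $v=v_1+\cdots+v_p\in V_{\mbA_1}$ with $v_j\in V^q_{\mu_j}$ and shows each component $v_j$ already lies in $V_{\mbA_1}$ by applying the explicit element $u=\prod_{i\neq j}\big(q^{h_i};-\mu_i(h_i)\big)_q\big/\big(q^{\mu_j(h_i)};-\mu_i(h_i)\big)_q\in\mfU_{\mbA_1}^0$ (the denominators are units of $\mbA_1$ because $\mu_j(h_i)\neq\mu_i(h_i)$, and the numerators lie in $\mfU_{\mbA_1}^0$ by Lemma \ref{lem:A1tridecomp}), which acts as $1$ on $v_j$ and kills the other $v_i$. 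The two routes buy different things: the paper's projection trick uses only the $\mfU_{\mbA_1}^0$-stability of $V_{\mbA_1}$, so it does not depend on the cyclic description $V_{\mbA_1}=\mfU_{\mbA_1}^-v$ and applies verbatim to any $\mbA_1$-lattice stable under $\mfU_{\mbA_1}^0$; your argument is shorter given Proposition \ref{prop:highestweightV} (which precedes this proposition, so there is no circularity) but is tied to the highest weight situation. Two cosmetic points: your assertion that every element of $\mfU_{\mbA_1}^-$ is an $\mbA_1$-combination of the chosen basis monomials $f_\zeta$ would require knowing the straightening coefficients lie in $\mbA_1$ — this is not needed, since spanning by arbitrary homogeneous words already suffices; and your final ``exhaustion'' paragraph identifying $(V_{\mbA_1})_\mu$ with an explicit $\mbA_1$-span is superfluous for the statement as defined via $(V_{\mbA_1})_\mu=V_{\mbA_1}\cap V^q_\mu$ (it is closer to what is used later in Proposition \ref{prop:a1wsrank}).
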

\begin{proof} The idea is standard but for reader's convenience we present the proof. Let $v = v_1+ \hdots + v_p \in V_{\mbA_1}$, where $v_j\in V_{\mu_j}^q$ and $\mu_j\in P$ for each $j\in \{1,2,\hdots, p\}$. We can assume that $\mu_j$ are distinct due to the weight decomposition of $V^q$. Therefore, it is enough to show that $v_j\in V_{\mbA_1}$ for each $j$.

Fix an index $j$. For each $i\neq j$, choose $h_i\in P^\vee$ such that $\mu_j(h_i) \neq \mu_i(h_i)$. Let $u\in \mfU_{\mbA_1}$ be defined by
\begin{align*}
u \coloneqq \prod_{i\neq j} \frac{\big( q^{h_i}; -\mu_i(h_i) \big)_q}{\big( q^{\mu_j(h_i)}; -\mu_i(h_i) \big)_q}.
\end{align*}
Then for each $i \neq j$:
\begin{align*}
\frac{\big( q^{h_i}; -\mu_i(h_i) \big)_q}{\big( q^{\mu_j(h_i)}; -\mu_i(h_i) \big)_q}v_k &= \frac{q^{-\mu_i(h_i)}q^{h_i} - 1}{q^{-\mu_i(h_i) }q^{\mu_j(h_i)} - 1}v_k\ = \frac{q^{-\mu_i(h_i)}q^{\mu_k(h_i)} - 1}{q^{-\mu_i(h_i) }q^{\mu_j(h_i)} - 1}v_k,
\end{align*}
where $k\in \{ 1, 2,\hdots, p\}$. Therefore $uv_j = v_j$ and $uv_i = 0$. Hence, $v_j = uv\in V_{\mbA_1}$.
\end{proof}

\begin{proposition} \label{prop:a1wsrank}
For each $\mu\in P$, the weight space $(V_{\mbA_1})_\mu$ is a free $\mbA_1$-module with $\rank_{\mbA_1}(V_{\mbA_1})_\mu = \dim_{\C(q)}V^q_\mu$.
\end{proposition}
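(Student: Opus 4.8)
The plan is to combine the weight-space decomposition of $V_{\mbA_1}$ from Proposition \ref{prop:a1wsdecomp} with the triangular decomposition of the $\mbA_1$-form from Proposition \ref{prop:A1tridecomp} and the fact that $V_{\mbA_1} = \mfU_{\mbA_1}^- v$ from Proposition \ref{prop:highestweightV}. First I would fix a monomial $\mbA_1$-basis $\{f_\zeta \mid \zeta \in \Omega\}$ of $\mfU_{\mbA_1}^-$ consisting of ordered monomials in the $f_i$'s, $f_{\overline{i}}$'s, and $F_{\overline{j}}$'s, chosen so that each $f_\zeta$ is $Q$-homogeneous, say $\deg f_\zeta = \gamma_\zeta \in Q_-$. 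Then $V_{\mbA_1}$ is spanned over $\mbA_1$ by $\{f_\zeta v \mid \zeta\in\Omega\}$, and grouping by degree, $(V_{\mbA_1})_\mu$ is spanned over $\mbA_1$ by those $f_\zeta v$ with $\lambda + \gamma_\zeta = \mu$. This shows $(V_{\mbA_1})_\mu$ is a finitely generated $\mbA_1$-module (there are finitely many $\zeta$ with a given degree, since each weight space of $\mfU_q^-$ is finite-dimensional over $\C(q)$).

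Next I would pass to the generic fiber: tensoring with $\C(q)$ over $\mbA_1$, we have $V_{\mbA_1}\otimes_{\mbA_1}\C(q) \cong V^q$ as $\C(q)$-vector spaces — indeed $V_{\mbA_1}\subseteq V^q$ and $V^q = \C(q)\cdot V_{\mbA_1}$ since $V^q = \mfU_q^- v = \C(q)\otimes_{\mbA_1}\mfU_{\mbA_1}^- v$ — and this identification is compatible with the $P$-grading, so $(V_{\mbA_1})_\mu\otimes_{\mbA_1}\C(q) \cong V^q_\mu$. Consequently the images of any $\mbA_1$-generating set of $(V_{\mbA_1})_\mu$ span $V^q_\mu$, and conversely any $\mbA_1$-linear relation among elements of $(V_{\mbA_1})_\mu$ becomes a $\C(q)$-linear relation after clearing denominators. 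Since $\mbA_1$ is a local principal ideal domain (it is a localization of $\C[q]$ at a height-one prime, hence a discrete valuation ring), a finitely generated $\mbA_1$-module that is torsion-free is automatically free; and $(V_{\mbA_1})_\mu$ is torsion-free as a submodule of the $\C(q)$-vector space $V^q$. Therefore $(V_{\mbA_1})_\mu$ is free of finite rank.

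To pin down the rank, I would use that for a finitely generated free module over the domain $\mbA_1$ with fraction field $\C(q)$, the rank equals $\dim_{\C(q)}\big((V_{\mbA_1})_\mu\otimes_{\mbA_1}\C(q)\big)$, which by the previous paragraph is $\dim_{\C(q)}V^q_\mu$. This completes the argument.

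The only genuinely delicate point is verifying cleanly that $(V_{\mbA_1})_\mu\otimes_{\mbA_1}\C(q)$ really is $V^q_\mu$ — i.e.\ that localizing the $\mbA_1$-span of the $f_\zeta v$ recovers the full $\C(q)$-span without collapsing dimension. This follows because $V^q = \mfU_q^-v$, the $f_\zeta$ form a $\C(q)$-basis of $\mfU_q^-$, and Proposition \ref{prop:a1wsdecomp} already supplies the matching $P$-grading on $V_{\mbA_1}$, so the degree-$\mu$ component of the $\mbA_1$-span is exactly the $\mbA_1$-span of the degree-$\mu$ vectors $f_\zeta v$, whose $\C(q)$-span is $V^q_\mu$. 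Everything else is a standard invocation of structure theory over a DVR.
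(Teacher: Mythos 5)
Your argument is correct and follows essentially the same route as the paper: finite generation plus torsion-freeness over the principal ideal domain $\mbA_1$ gives freeness, and the rank is identified with $\dim_{\C(q)}V^q_\mu$ using that monomial vectors $f_\zeta v$ of the right degree lie in $(V_{\mbA_1})_\mu$ and span $V^q_\mu$ over $\C(q)$, your passage to the generic fiber $(V_{\mbA_1})_\mu\otimes_{\mbA_1}\C(q)$ being just a repackaging of the paper's two inequalities. One small remark: you do not actually need the (unjustified, though true) existence of a monomial $\mbA_1$-basis of $\mfU_{\mbA_1}^-$ --- the set of all monomials in the generators is an $\mbA_1$-spanning set with finitely many members in each degree, which suffices for finite generation, while the monomial $\C(q)$-basis of $\mfU_q^-$ already used in Lemma \ref{lem:tridecomp} gives $\C(q)\cdot(V_{\mbA_1})_\mu=V^q_\mu$.
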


\begin{proof} Since $\mbA_1$ is a principal ideal domain, every finitely generated torsion-free module over $\mbA_1$ is free. Notice that, for each $\mu\in P$, the weight space $(V_{\mbA_1})_\mu$ is finitely generated as an $\mbA_1$-module. The weight space is also torsion free, as otherwise it would contradict the fact that $\mbA_1$ is an integral domain. Thus, for each $\mu\in P$, the weight space $(V_{\mbA_1})_\mu$ is a free $\mbA_1$-module.

Since $\C(q)$ is the field of quotients of $\mbA_1$, a set of vectors of a $\C(q)$-vector space is linearly independent  if and only if it is $\mbA_1$-linearly independent. Thus $\rank_{\mbA_1}(V_{\mbA_1})_\mu \leq \dim_{\C(q)}V^q_\mu$. Let $f_\zeta$'s be some monomials in $f_i$'s, $f_{\overline{j}}$'s, and $F_{\overline{\ell}}$'s. Since $\{f_\zeta\}$ forms a linearly independent set over $\C(q)$, $\{f_{\zeta}v \; | \; \zeta \in Z\}$ is a $\C(q)$-basis of $V_\mu^q$ for an appropriate set $Z$. This basis is also contained in $(V_{\mbA_1})_\mu$ by definition. So we have that $\rank_{\mbA_1}(V_{\mbA_1})_\mu \geq \dim_{\C(q)}V^q_\mu$, which completes the proof. \end{proof}

Combining the previous two propositions gives us the following:

\begin{corollary} The map $\phi: \C(q) \otimes_{\mbA_1} V_{\mbA_1} \rightarrow V^q$, $f\otimes v \mapsto fv$, is an isomorphism of $\C(q)$-vector spaces.\end{corollary}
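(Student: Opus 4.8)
The plan is to build a map in the other direction and check the two composites are identities. By Proposition~\ref{prop:a1wsdecomp}, $V_{\mbA_1} = \bigoplus_{\mu \le \lambda} (V_{\mbA_1})_\mu$, and by Proposition~\ref{prop:a1wsrank} each $(V_{\mbA_1})_\mu$ is a free $\mbA_1$-module of rank $\dim_{\C(q)} V^q_\mu$. Fix, for each weight $\mu$, an $\mbA_1$-basis $\{w^\mu_1, \ldots, w^\mu_{d_\mu}\}$ of $(V_{\mbA_1})_\mu$. In the proof of Proposition~\ref{prop:a1wsrank} one may in fact take this basis to be of the form $\{f_\zeta v\}$ for suitable monomials $f_\zeta$; in any case, the point is that such a basis exists and, since $V^q$ is a weight module with $V^q = \bigoplus_\mu V^q_\mu$, its image in $V^q$ is a $\C(q)$-basis of $V^q_\mu$.

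First I would show $\phi$ is surjective. Any $x \in V^q$ lies in some finite sum $\bigoplus_{\mu} V^q_\mu$, and since the $w^\mu_j$ span $V^q_\mu$ over $\C(q)$, we can write $x = \sum_{\mu, j} c_{\mu,j} w^\mu_j$ with $c_{\mu,j} \in \C(q)$; then $x = \phi\big(\sum_{\mu,j} c_{\mu,j} \otimes w^\mu_j\big)$, so $\phi$ is onto. For injectivity, note that $\C(q) \otimes_{\mbA_1} V_{\mbA_1} = \bigoplus_\mu \big(\C(q) \otimes_{\mbA_1} (V_{\mbA_1})_\mu\big)$ since tensoring over $\mbA_1$ commutes with direct sums, and $\C(q) \otimes_{\mbA_1} (V_{\mbA_1})_\mu$ is a $\C(q)$-vector space with basis $\{1 \otimes w^\mu_j : 1 \le j \le d_\mu\}$ because $(V_{\mbA_1})_\mu$ is free of rank $d_\mu$. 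The map $\phi$ sends this basis to $\{w^\mu_j\}$, which we just observed is a $\C(q)$-basis of $V^q_\mu$. Since $\phi$ respects the weight-space decompositions on both sides (it is built from the identity on $V_{\mbA_1}$ and $V^q$ is a $\mfU_q^0$-weight module), $\phi$ restricts to an isomorphism on each weight space $\C(q)\otimes_{\mbA_1}(V_{\mbA_1})_\mu \iso V^q_\mu$, and hence is an isomorphism.

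Finally, $\phi$ is $\C(q)$-linear by construction (it is the scalar-extension of the inclusion $V_{\mbA_1} \hookrightarrow V^q$, which is the universal property of $\C(q) \otimes_{\mbA_1} -$ applied to the $\mbA_1$-linear inclusion $V_{\mbA_1}\hookrightarrow V^q$, where $V^q$ is regarded as an $\mbA_1$-module by restriction of scalars). This completes the argument. The only point requiring a little care — and the main obstacle, though it is mild — is making sure the chosen bases $\{w^\mu_j\}$ are simultaneously $\mbA_1$-bases of $(V_{\mbA_1})_\mu$ and $\C(q)$-bases of $V^q_\mu$; this is exactly what Proposition~\ref{prop:a1wsrank} (via the equality of rank and dimension, and the explicit monomial basis $\{f_\zeta v\}$ in its proof) provides, so no further work is needed.
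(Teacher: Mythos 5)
Your argument is correct and is exactly the combination of Propositions \ref{prop:a1wsdecomp} and \ref{prop:a1wsrank} that the paper itself invokes (the paper gives no further proof beyond ``combining the previous two propositions''). The weight-space-by-weight-space verification, using that an $\mbA_1$-basis of the free module $(V_{\mbA_1})_\mu$ stays linearly independent over the fraction field $\C(q)$ and hence is a basis of $V^q_\mu$ by the rank count, is precisely the intended reasoning.
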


Let $\mbJ_1$ be the maximal ideal of $\mbA_1$ generated by $q-1$. Then there is a canonical isomorphism of fields 
\begin{align*}
\mbA_1 / \mbJ_1 \xlongrightarrow{\sim} \C \hspace{5pt} \text{ given by } f(q) + \mbJ_1 \longmapsto f(1).
\end{align*}

Define the $\C$-vector spaces
\begin{align*}
U_1 &= (\mbA_1/\mbJ_1) \otimes_{\mbA_1} \mfU_{\mbA_1},\\
V^1 &= (\mbA_1/\mbJ_1) \otimes_{\mbA_1} V_{\mbA_1}.
\end{align*}

Note that since $V_{\mbA_1}$ is a $\mfU_{\mbA_1}$-module, $V^1$ is naturally a $U_1$-module. Note that
\begin{align*}
U_1 \cong \mfU_{\mbA_1}/ \mbJ_1\mfU_{\mbA_1} \hspace{10pt} \text{ and } \hspace{10pt} V^1 \cong  V_{\mbA_1} / \mbJ_1 V_{\mbA_1},
\end{align*}

which gives rise to the following natural maps
\begin{align*}
\mfU_{\mbA_1} &\longrightarrow \mfU_{\mbA_1}/ \mbJ_1\mfU_{\mbA_1} \cong U_1, \\
V_{\mbA_1} &\longrightarrow V_{\mbA_1} / \mbJ_1 V_{\mbA_1}\cong V^1.
\end{align*}

Note that $q$ is mapped to $1$ under these maps, hence $U_1$ can be considered as the limit of $\mfU_q\mfp_n$ at $q=1$. The passage under these maps is referred to as taking the \textit{classical limit}. We write $\overline{x}$ for the image of $x$ under these maps.

Let $\overline{h}\in U_1$ denote the classical limit of the element $(q^h;0)_q\in \mfU_{\mbA_1}$.  The following is standard (see for example Lemma 3.4.3 in \cite{HK}):

\begin{lemma} \label{lem:classicalU0} \hspace{2pt} \\ \vspace{-15pt}
\begin{itemize}
\item[(a)] For all $h\in P^\vee$, $\overline{q^h} = 1$
\item[(b)] For any $h,h^\prime\in P^\vee$, $\overline{h+h^\prime} = \overline{h} + \overline{h^\prime}$.
\end{itemize}

\end{lemma}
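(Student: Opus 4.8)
\textbf{Proof plan for Lemma \ref{lem:classicalU0}.}

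The plan is to work directly from the definitions of the classical-limit maps, namely that $\overline{x}$ is the image of $x\in\mfU_{\mbA_1}$ under the quotient $\mfU_{\mbA_1}\to\mfU_{\mbA_1}/\mbJ_1\mfU_{\mbA_1}\cong U_1$, where $\mbJ_1=(q-1)\mbA_1$. Part (a) is immediate: the element $q^h-1$ lies in $\mbJ_1\mfU_{\mbA_1}$ because $q-1$ divides $q^h-1$ in $\C[q]$ when $h$ is a nonnegative combination of the $k_i$, and for general $h\in P^\vee$ one uses $q^{-h}=(q^h)^{-1}$ together with the identity $q^{-m}-1 = -q^{-m}(q^m-1)$, so $q^{-m}-1\in\mbJ_1\mfU_{\mbA_1}$ as well. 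Hence $q^h-1\equiv 0$, i.e. $\overline{q^h}=1$.

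For part (b), the key is to write $(q^h;0)_q = (q^h-1)/(q-1)$ and expand. First I would treat the case $h=\sum_i c_i k_i$ with $c_i\in\Z_{\ge 0}$: then $q^h = q^{\sum c_i k_i}$ and, writing $a=q^{h}$, $a'=q^{h'}$, one has the algebraic identity $(aa'-1) = (a-1)a' + (a'-1)$, so dividing by $q-1$ gives $(q^{h+h'};0)_q = (q^h;0)_q\, q^{h'} + (q^{h'};0)_q$ exactly in $\mfU_{\mbA_1}$. Now pass to the classical limit: using part (a), $\overline{q^{h'}}=1$, so $\overline{h+h'} = \overline{h}\cdot 1 + \overline{h'} = \overline{h}+\overline{h'}$. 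To handle arbitrary $h,h'\in P^\vee$ (possibly with negative coefficients), I would first record, by the same $q-1$-divisibility argument used in part (a), that $(q^h;0)_q\in\mfU_{\mbA_1}$ for all $h\in P^\vee$ (this is essentially Lemma \ref{lem:A1tridecomp}), and that the additivity identity $(q^{h+h'};0)_q = (q^h;0)_q q^{h'} + (q^{h'};0)_q$ continues to hold, since it is a purely formal consequence of $q^{h+h'}=q^h q^{h'}$ in the commutative subalgebra $\mfU_q^0$. Then the same one-line computation in the classical limit finishes the proof.

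I do not expect a genuine obstacle here; the only point requiring mild care is the bookkeeping that $(q^h;0)_q$ really lies in $\mfU_{\mbA_1}$ for all $h\in P^\vee$ (not just the ``positive'' ones appearing in the generating set), which is exactly what Lemma \ref{lem:A1tridecomp}(a) provides, and the verification that the additivity identity is valid in $\mfU_{\mbA_1}$ before one reduces mod $\mbJ_1$. Once those are in hand, parts (a) and (b) are each a two-line computation, and this is precisely why the statement is flagged as standard with a reference to Lemma 3.4.3 in \cite{HK}.
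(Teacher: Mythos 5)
Your proof is correct and is essentially the standard argument that the paper invokes by citing Lemma 3.4.3 of \cite{HK} (the paper gives no proof of its own): part (b) rests on the identity $(q^{h+h'};0)_q = (q^h;0)_q\,q^{h'} + (q^{h'};0)_q$ in $\mfU_{\mbA_1}$ followed by reduction mod $\mbJ_1$ using part (a), exactly as in the reference. One small cleanup for part (a): the phrase ``$q-1$ divides $q^h-1$ in $\C[q]$'' is not literally meaningful, since $q^h$ for $h\in P^\vee$ is a group-like element of $\mfU_q^0$ rather than a power of the scalar $q$; the clean justification is simply $q^h-1=(q-1)(q^h;0)_q$ together with the fact that $(q^h;0)_q$ is by definition a generator of $\mfU_{\mbA_1}$ for \emph{every} $h\in P^\vee$, so no separate treatment of negative coefficients (nor an appeal to Lemma \ref{lem:A1tridecomp}) is needed.
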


This lemma shows that the image of $\mfU_{\mbA_1}^0$ under the classical limit is quite close to $U^0 = \mfU(\mfh)$.

For each $\mu\in P$, define $V_\mu^1 = (\mbA_1/\mbJ_1) \otimes_{\mbA_1} (V_{\mbA_1})_\mu \cong (V_{\mbA_1})_\mu / \mbJ_1 (V_{\mbA_1})_\mu $. 

\begin{lemma} \label{lem:v1wsdecomp} \hspace{2pt} \\ \vspace{-15pt}
\begin{itemize}
\item[(a)] For each $\mu\in P$, if $\{v_i \: | \; i=1,...,m\}$ is a basis of the free $\mbA_1$-module $(V_{\mbA_1})_\mu$, then $\{\overline{v}_i \: | \; i=1,...,m\}$ is a basis of the $\C$-vector space $V^1_\mu$.
\item[(b)] For each $\mu\in P$, a subset $\{v_i \: | \; i=1,...,m\}$ of $ (V_{\mbA_1})_\mu$ is $\mbA_1$-linearly independent if the  $\{\overline{v}_i \: | \; i=1,...,m\} \subset V^1_\mu$ is $\C$-linearly independent.
\end{itemize}

\end{lemma}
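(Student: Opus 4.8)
The plan is to reduce both statements to the fact, established in Proposition \ref{prop:a1wsrank}, that $(V_{\mbA_1})_\mu$ is a free $\mbA_1$-module of rank $\dim_{\C(q)} V^q_\mu$, combined with the basic commutative algebra of the local ring $\mbA_1$ with maximal ideal $\mbJ_1 = (q-1)$ and residue field $\mbA_1/\mbJ_1 \cong \C$. For part (a), I would argue as follows. Since $(V_{\mbA_1})_\mu$ is free with basis $\{v_i\}_{i=1}^m$, the construction $V^1_\mu = (\mbA_1/\mbJ_1)\otimes_{\mbA_1}(V_{\mbA_1})_\mu$ is just base change along the quotient map $\mbA_1 \onto \mbA_1/\mbJ_1$, and tensoring a free module with basis $\{v_i\}$ by a ring yields a free module over that ring with basis $\{1\otimes v_i\} = \{\overline{v}_i\}$. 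Thus $\{\overline{v}_i\}_{i=1}^m$ spans $V^1_\mu$ over $\C$ and is $\C$-linearly independent, hence a basis; in particular $\dim_\C V^1_\mu = m = \rank_{\mbA_1}(V_{\mbA_1})_\mu = \dim_{\C(q)} V^q_\mu$, which is worth recording.

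For part (b), suppose $\{v_i\}_{i=1}^m \subset (V_{\mbA_1})_\mu$ has $\C$-linearly independent image $\{\overline{v}_i\}_{i=1}^m$ in $V^1_\mu$. I would show directly that $\{v_i\}$ is $\mbA_1$-linearly independent. Assume $\sum_i c_i v_i = 0$ with $c_i \in \mbA_1$ not all zero. Since $\mbA_1$ is a PID (indeed a DVR) with uniformizer $q-1$, we may factor out the largest common power of $q-1$: write $c_i = (q-1)^N c_i'$ with $N \ge 0$ maximal, so that $\sum_i c_i' v_i = 0$ (here we use that $(V_{\mbA_1})_\mu$ is torsion-free, being free) and not all $c_i'$ lie in $\mbJ_1$. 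Reducing modulo $\mbJ_1$ gives $\sum_i \overline{c_i'}\,\overline{v}_i = 0$ in $V^1_\mu$ with not all $\overline{c_i'} \in \C$ equal to zero, contradicting the assumed independence of $\{\overline{v}_i\}$. Hence no nontrivial relation exists.

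I do not anticipate a genuine obstacle here: the statement is a standard descent/base-change fact for free modules over a DVR, and the only inputs needed are already in place — freeness and torsion-freeness of $(V_{\mbA_1})_\mu$ from Proposition \ref{prop:a1wsrank}, and that $\mbA_1$ is a PID with $\mbJ_1 = (q-1)$ (noted just before the lemma). The one point requiring a little care is making sure the isomorphism $V^1_\mu \cong (V_{\mbA_1})_\mu/\mbJ_1(V_{\mbA_1})_\mu$ is used correctly, so that "reducing mod $\mbJ_1$" in part (b) literally means applying the canonical surjection $(V_{\mbA_1})_\mu \onto V^1_\mu$; with that identification both parts are short.
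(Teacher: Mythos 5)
Your proposal is correct and follows essentially the same route as the paper: part (a) is the base-change argument (any element of $V^1_\mu=(\mbA_1/\mbJ_1)\otimes_{\mbA_1}(V_{\mbA_1})_\mu$ is a $\C$-combination of the $\overline{v}_i$, with independence coming from freeness), and part (b) is exactly the paper's argument of clearing the maximal power of $q-1$ from a putative relation and passing to the classical limit to contradict the $\C$-independence of the $\overline{v}_i$. If anything, your justification of independence in (a) via the full basis/base-change property, and your explicit appeal to torsion-freeness when cancelling $(q-1)^N$ in (b), are slightly more careful than the paper's terse wording, but the substance is identical.
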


\begin{proof} We first prove that $\overline{v}_i$, $i=1,...,m$, span $V^1_\mu$. For every  $v$ in $V^1_\mu =  (\mbA_1/\mbJ_1) \otimes_{\mbA_1} (V_{\mbA_1})_\mu$ we have 
\begin{align*}
v = \sum\limits_{i=1}^m a_i \otimes_{\mbA_1} v_i = \sum\limits_{i=1}^m b_i (1 \otimes_{\mbA_1} v_i) = \sum\limits_{i=1}^m b_i \overline{v}_i,
\end{align*}
where $a_i = b_i + \mbJ_1 \in \mbA_1/\mbJ_1$, $\overline{v}_i = (1 \otimes_{\mbA_1} v_i)$. So the set $\{1 \otimes_{\mbA_1} v_i\}$ $\C$-spans $ V^1_\mu$. The linear independence follows from the fact that $\{v_i\}$, $i=1,...,m$, are $\mbA_1$-linearly independent. 

We now prove part (b). Assume that $\displaystyle \sum_{i=1}^m c_i(q)v_i = 0$ for $c_i(q) \in \mbA_1$, with some $c_j(q)\neq 0$ for some $j$. Then, multiplying by an appropriate power of $q-1$, we may assume that $c_j(1) \neq 0$. Applying the classical limit gives   $\displaystyle \sum_{i=1}^m c_i(1)\overline{v}_i = 0$.  This contradicts the linear independence of $\overline{v}_i$, $i=1,...,m$.
\end{proof}

The following proposition is the analogue of Propositions \ref{prop:a1wsdecomp} and \ref{prop:a1wsrank} for $V^1$.
\begin{proposition} \label{prop:v1wsdecomp} \hspace{2pt} \\ \vspace{-15pt}
\begin{itemize}
\item[(a)] $V^1 =  \bigoplus\limits_{\mu\leq \lambda} V^1_\mu$
\item[(b)] For each $\mu\in P$, $\dim_{\C}V_\mu^1 = \rank_{\mbA_1}(V_{\mbA_1})_\mu$
\end{itemize}
\end{proposition}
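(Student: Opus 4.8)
The plan is to transport both assertions from the already-established facts about $V_{\mbA_1}$ in Propositions \ref{prop:a1wsdecomp} and \ref{prop:a1wsrank} via the flatness/freeness of the base change $\mbA_1 \to \mbA_1/\mbJ_1 \cong \C$. For part (b), the key point is that by Proposition \ref{prop:a1wsrank} each weight space $(V_{\mbA_1})_\mu$ is a free $\mbA_1$-module, so by Lemma \ref{lem:v1wsdecomp}(a) a basis of the free module $(V_{\mbA_1})_\mu$ maps to a $\C$-basis of $V^1_\mu = (\mbA_1/\mbJ_1)\otimes_{\mbA_1}(V_{\mbA_1})_\mu$; hence $\dim_\C V^1_\mu = \rank_{\mbA_1}(V_{\mbA_1})_\mu$, which is exactly the claim. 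This is essentially a one-line deduction once Lemma \ref{lem:v1wsdecomp}(a) is invoked.

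For part (a), I would first observe that $V^1 = (\mbA_1/\mbJ_1)\otimes_{\mbA_1} V_{\mbA_1}$ and apply the functor $(\mbA_1/\mbJ_1)\otimes_{\mbA_1}(-)$ to the decomposition $V_{\mbA_1} = \bigoplus_{\mu\leq\lambda}(V_{\mbA_1})_\mu$ of Proposition \ref{prop:a1wsdecomp}. Since tensor product commutes with arbitrary direct sums, this immediately gives $V^1 = \sum_{\mu\leq\lambda} V^1_\mu$ with $V^1_\mu = (\mbA_1/\mbJ_1)\otimes_{\mbA_1}(V_{\mbA_1})_\mu$. It remains to check the sum is direct, i.e.\ that the $V^1_\mu$ are linearly independent inside $V^1$. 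The cleanest argument: each $(V_{\mbA_1})_\mu$ is a free $\mbA_1$-module (Proposition \ref{prop:a1wsrank}), so $V_{\mbA_1}$ is free, hence flat, and the natural map $(\mbA_1/\mbJ_1)\otimes_{\mbA_1}(V_{\mbA_1})_\mu \to (\mbA_1/\mbJ_1)\otimes_{\mbA_1} V_{\mbA_1}$ is injective with images whose internal direct sum is all of $V^1$; concretely, picking $\mbA_1$-bases of each $(V_{\mbA_1})_\mu$ and taking their union gives an $\mbA_1$-basis of $V_{\mbA_1}$, and by Lemma \ref{lem:v1wsdecomp}(a) this maps to a $\C$-basis of $V^1$ that is partitioned by weight, forcing $V^1 = \bigoplus_{\mu\leq\lambda} V^1_\mu$.

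Alternatively, and perhaps more in keeping with the paper's style, one can verify directness at the level of the $q$-action: $V^1_\mu$ is precisely the $\mu$-weight space for the action of $\overline{q^h}$-type operators, or more safely for the classical limit $\overline{h}$ of $(q^h;0)_q$, which acts on $(V_{\mbA_1})_\mu$ as multiplication by $\overline{(q^{\mu(h)};0)_q} = \mu(h)$ (a distinct scalar for distinct $\mu$, after choosing suitable $h\in P^\vee$ as in the proof of Proposition \ref{prop:a1wsdecomp}); distinct eigenvalues give a direct sum. I expect the only mild subtlety — the "main obstacle," though it is not a serious one — is making sure the weight-space-by-weight-space base change is exact, which is why I would lean on the freeness established in Proposition \ref{prop:a1wsrank} rather than trying to argue exactness abstractly; with that freeness in hand, both parts follow formally from Lemma \ref{lem:v1wsdecomp}(a) and Propositions \ref{prop:a1wsdecomp}--\ref{prop:a1wsrank}.
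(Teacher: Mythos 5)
Your argument is correct and follows the same route as the paper, which simply derives (a) from Proposition \ref{prop:a1wsdecomp} and (b) from Lemma \ref{lem:v1wsdecomp} (with the freeness from Proposition \ref{prop:a1wsrank} implicitly in the background); you have merely written out the base-change and directness details that the paper leaves to the reader.
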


\begin{proof}
The first assertion follows from Proposition \ref{prop:a1wsdecomp}, while the second assertion follows from Lemma \ref{lem:v1wsdecomp}.
\end{proof}

The following theorem shows that the classical limit of $\mfU_q\mfp_n$ is isomorphic to $\mfU\mfp_n$.

\begin{theorem}\label{thm:classicallimit} \hspace{2pt} \\ \vspace{-15pt}
\begin{itemize}
\item[(a)] The elements $\overline{e_i}$, $\overline{e_{\overline{i}}}$, $\overline{f_i}$, $\overline{f_{\overline{i}}}$, ($i\in I$) $\overline{F_{\overline{i}}}$, ($i\in I \cup \{ n\}$) and $\overline{h}$ ($h\in P^\vee$) satisfy the defining relations of $\mfU\mfp_n$. Moreover, there exists a  $\C$-superalgebra isomorphism $\varphi : \mfU\mfp_n \longrightarrow U_1$ and the $U_1$-module $V^1$ has a $\mfU\mfp_n$-module structure.
\item[(b)] For each $\mu \in P$ and $h\in P^\vee$, the element $\overline{h}$ acts on $V_\mu^1$ as scalar multiplication by $\mu(h)$. So, $V_\mu^1$ is the $\mu$-weight space of the $\mfU\mfp_n$-module $V^1$.
\item[(c)] As a $\mfU\mfp_n$-module, $V^1$ is a highest weight module with highest weight $\lambda \in P$ and highest weight vector $\overline{v}$.
\end{itemize}
\end{theorem}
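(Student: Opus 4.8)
The three assertions are the standard ``classical limit'' package, and the plan is to prove them in the order (a), (b), (c), using the $\mathbb{A}_1$-form machinery already set up above. For part (a), the key observation is that every defining relation of $\mfU_q\mfp_n$ listed in Proposition \ref{prop:alg} has coefficients lying in $\mbA_1$ once we replace the offending expressions $\tfrac{q^{2k_i}-q^{2k_{i+1}}}{q^2-1}$ by the integral element $q^{-1}q^{k_i+k_{i+1}}[q^{k_i-k_{i+1}};0]_q$ (as already done in the relation list inside the proof of Proposition \ref{prop:A1tridecomp}). So I would first rewrite each relation of $\mfU_q\mfp_n$ as a relation with coefficients in $\mbA_1$ among the generators $e_i, f_i, e_{\overline{i}}, f_{\overline{i}}, F_{\overline{i}}, q^h, (q^h;0)_q$, then apply the algebra map $\mfU_{\mbA_1}\to U_1$ and set $q=1$. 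Using Lemma \ref{lem:classicalU0} (which gives $\overline{q^h}=1$ and additivity of $\overline{h}$) and the limits $\overline{[q^{h};0]_q}=\overline{h}$, $\overline{[m]_q}=m$, $\overline{q-q^{-1}}=0$, each $q$-relation degenerates exactly to the corresponding relation of $\mfp_n$ from Proposition \ref{prop:pn} together with Lemma 1.2; for instance $e_if_i-f_ie_i=-\tfrac{q^{2k_i}-q^{2k_{i+1}}}{q^2-1}+\tfrac{q^2-1}{q^2}f_{\overline{i}}e_{\overline{i}}$ becomes $[\overline{e_i},\overline{f_i}]=-(\overline{k_i}-\overline{k_{i+1}})$, and the quantum Serre-type relations become the classical (anti)commutator Serre relations. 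This produces a $\C$-superalgebra homomorphism $\varphi:\mfU\mfp_n\to U_1$ sending the classical generators to the barred elements.

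For surjectivity of $\varphi$: $U_1$ is generated by the images of all the $\mbA_1$-generators of $\mfU_{\mbA_1}$, and $\overline{q^h}=1$ by Lemma \ref{lem:classicalU0}(a), so $U_1$ is generated by $\overline{e_i},\overline{f_i},\overline{e_{\overline{i}}},\overline{f_{\overline{i}}},\overline{F_{\overline{i}}},\overline{h}$, which are all in the image. For injectivity I would compare triangular decompositions: Proposition \ref{prop:A1tridecomp} gives $\mfU_{\mbA_1}\cong\mfU_{\mbA_1}^-\otimes\mfU_{\mbA_1}^0\otimes\mfU_{\mbA_1}^+$ as $\mbA_1$-modules, and since base change is right exact this yields $U_1\cong \overline{\mfU_{\mbA_1}^-}\otimes_{\C}\overline{\mfU_{\mbA_1}^0}\otimes_{\C}\overline{\mfU_{\mbA_1}^+}$; monomial $\mbA_1$-bases of the three factors (the $f_\zeta$, the $q^h$ together with $(q^h;0)_q$, and the $e_\tau$) reduce mod $\mbJ_1$ to spanning sets of the classical $\mfU\mfp_n^-$, $\mfU\mfh$, $\mfU\mfp_n^+$, and a dimension/PBW count in each graded piece shows $\varphi$ is an isomorphism onto $U_1$. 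Then $V^1$, which is a $U_1$-module, becomes a $\mfU\mfp_n$-module via $\varphi$. Part (b) is then immediate: on $(V_{\mbA_1})_\mu$ we have $q^hv=q^{\mu(h)}v$, hence $(q^h;0)_q$ acts by $\tfrac{q^{\mu(h)}-1}{q-1}$, an element of $\mbA_1$ whose value at $q=1$ is $\mu(h)$; passing to $V^1_\mu$ shows $\overline{h}$ acts by $\mu(h)$, and by Proposition \ref{prop:v1wsdecomp}(a) these $V^1_\mu$ give the weight-space decomposition of $V^1$ as a $\mfU\mfp_n$-module.

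For part (c): $\overline{v}\neq 0$ in $V^1$ because $v$ spans the free rank-one $\mbA_1$-module $(V_{\mbA_1})_\lambda$ and Lemma \ref{lem:v1wsdecomp}(a) keeps it nonzero after reduction; by (b) it has $\mfU\mfh$-weight $\lambda$. It is a highest weight (maximal) vector since $\overline{\mfU_{\mbA_1}^+}\,\overline{v}$ is the image of $\mfU_{\mbA_1}^+ v=0$ (Proposition \ref{prop:highestweightV}), so $\mfp_n^+\overline{v}=0$. Finally $V^1=\mfU\mfp_n\,\overline{v}$: by Proposition \ref{prop:highestweightV}, $V_{\mbA_1}=\mfU_{\mbA_1}^-v$, so $V^1$ is spanned over $\C$ by the $\overline{f_\zeta}\,\overline{v}$, and each $\overline{f_\zeta}\in\overline{\mfU_{\mbA_1}^-}=\varphi(\mfU\mfp_n^-)\subseteq\varphi(\mfU\mfp_n)$. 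Hence $V^1$ is a highest weight $\mfU\mfp_n$-module with highest weight $\lambda$ and highest weight vector $\overline{v}$.

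\textbf{Main obstacle.} The routine-but-bulky heart of the argument is part (a): one must check case by case that each quantum relation, after being rewritten with $\mbA_1$-coefficients, specializes at $q=1$ to exactly a classical relation (and no more), and that the barred generators inherit no spurious relations — i.e. the injectivity of $\varphi$. The triangular-decomposition comparison makes injectivity clean in principle, but verifying that the reductions of the chosen monomial $\mbA_1$-bases remain bases of the classical factors (rather than merely spanning sets) is the step requiring genuine care; this is where a PBW-type argument for $\mfU\mfp_n$ (as in Moon \cite{M}) or a rank count via Proposition \ref{prop:v1wsdecomp} is needed.
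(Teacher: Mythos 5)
Your parts (b) and (c) coincide with the paper's argument. For part (a), however, you take a genuinely different route. The paper never verifies the degenerated relations nor proves injectivity directly: it invokes Theorem 4.1 of \cite{AGG}, which already provides a $\C$-superalgebra isomorphism $\psi : \mfU\mfp_n \to \mfU_{\mcA}\mfp_n/(q-1)\mfU_{\mcA}\mfp_n$ for the FRT-type $\mcA$-form generated by the $\tau_{ij}$, and then composes $\psi$ with an isomorphism $\theta$ identifying the classical limits of the $\tau_{ij}$ with the barred Drinfeld--Jimbo generators (using that the classical limits of $t_{ij}$ and $\tau_{ij}$ coincide), so bijectivity is inherited from the earlier result. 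Your approach --- rewrite each relation of Proposition \ref{prop:alg} with $\mbA_1$-coefficients, specialize at $q=1$ to obtain a homomorphism $\varphi$, then prove bijectivity by comparing the triangular decomposition of Proposition \ref{prop:A1tridecomp} with the classical PBW decomposition --- is self-contained and avoids any reliance on the FRT presentation, but it shifts onto you two pieces of work the paper gets for free: (i) the case-by-case check that every quantum relation specializes to a classical one and that the barred generators acquire no extra relations, and (ii) the verification that the monomial $\mbA_1$-bases of $\mfU_{\mbA_1}^{\pm}$ and $\mfU_{\mbA_1}^{0}$ reduce modulo $\mbJ_1$ to genuine bases of $\mfU\mfp_n^{\pm}$ and $\mfU\mfh$, which needs the classical PBW theorem for $\mfp_n$ together with a freeness statement for the $\mbA_1$-forms that the paper only establishes over $\C(q)$. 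You correctly identify both points as the crux; a complete write-up along your lines must actually supply them, whereas the paper's citation of \cite{AGG} settles the isomorphism in one stroke at the cost of an external dependency.
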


\begin{proof}
To prove (a), we recall that by Theorem 4.1 in \cite{AGG}, there is a  $\C$-superalgebra isomorphism 
\begin{align*}
\psi: \mfU\mfp_n \rightarrow \mfU_{\mcA}\mfp_n/(q-1)\mfU_{\mcA}\mfp_n,
\end{align*} where $\mcA$ is the localization of $\C[q,q^{-1}]$ at the ideal generated by $q-1$, and $\mfU_{\mcA}\mfp_n$ is the $\mcA$-subalgebra of $\mfU_q\mfp_n$ generated by a set of elements $\tau_{ij}$ (for the precise definition of $\tau_{ij}$, see \S4 in \cite{AGG}). The map $\theta: \mfU_{\mcA}\mfp_n/(q-1)\mfU_{\mcA}\mfp_n \rightarrow \mfU_{\mbA_1}/ \mbA_1\mfU_{\mbA_1}$, defined by
\begin{align*}
&\theta(\overline{\tau}_{-i,-i-1}) \mapsto \overline{e_i}  && \theta(\overline{\tau}_{-i,i+1}) \mapsto \overline{e_{\overline{i}}}\\
&\theta(\overline{\tau}_{i,i+1}) \mapsto \overline{f_i} && \theta(\overline{\tau}_{i,-i-1}) \mapsto \overline{f_{\overline{i}}}\\
&\theta(\overline{\tau}_{i,-i}) \mapsto \overline{F_{\overline{i}}}  && \theta(\overline{\tau}_{ii}) \mapsto \overline{k}_i
\end{align*}
is also a  $\C$-superalgebra isomorphism.  Indeed, this follows from the definition of the corresponding generators and the fact that the classical limit of $t_{ij}$ and $\tau_{ij}$ coincide.  Also, we have already established that $\sigma: \mfU_{\mbA_1}/\mbA_1\mfU_{\mbA_1} \rightarrow U_1$ is a $\C$-superalgebra isomorphism by the definition of $U_1$. Therefore, the map
\begin{align*}
 \phi = \sigma\circ\theta\circ\psi: \mfU\mfp_n \longrightarrow U_1
\end{align*} is a  $\C$-superalgebra isomorphism.

To prove (b), let $w\in (V_{\mbA_1})_\mu$ and $h\in P^\vee$. Then we have that
\begin{align*}
(q^h; 0)_qw = \frac{q^h-1}{q-1}w = \frac{q^{\mu(h)}-1}{q-1}w.
\end{align*}
Taking the classical limit of both sides gives $\overline{h}\overline{w} = \mu(h)\overline{w}$, as desired.

It remains to prove (c). From  (b) we have that $\overline{h}\overline{v} = \lambda(h)\overline{v}$. Since $v$ is highest vector of $V_{\mbA_1}$ with highest weight $\lambda$, it follows that $\overline{e_{{i}}} \overline{v} = 0$ and $\overline{e_{\overline{i}}} \overline{v} = 0$ for each $i\in I$. Therefore, (a) and Proposition \ref{prop:highestweightV} imply that $V^1 = U_1^-\overline{v} = \mfU\mfp_n^-\overline{v}$. Hence, by definition, $V^1$ is a highest weight $\mfU\mfp_n$-module with highest weight $\lambda$ and highest weight vector $\overline{v}$. \end{proof}

\begin{proposition}\label{prop:charclasslimit} $\ch V^1 = \ch V^q$ \end{proposition}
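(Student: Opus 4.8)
The plan is to combine the weight-space results already in hand. Recall that, by definition, $\ch V^q = \sum_{\mu}\dim_{\C(q)} V^q_\mu\, e^\mu$, while $\ch V^1 = \sum_{\mu}\dim_{\C} V^1_\mu\, e^\mu$, where in the latter formula $V^1_\mu$ denotes the $\mu$-weight space of $V^1$ regarded as a $\mfU\mfp_n$-module via the isomorphism of Theorem \ref{thm:classicallimit}(a). By Theorem \ref{thm:classicallimit}(b) this $\mu$-weight space coincides with the subspace $V^1_\mu = (\mbA_1/\mbJ_1)\otimes_{\mbA_1}(V_{\mbA_1})_\mu$ introduced just before Lemma \ref{lem:v1wsdecomp}, so there is no ambiguity in the notation.

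First I would fix $\mu \in P$ and chain together the identities already established:
\begin{align*}
\dim_{\C} V^1_\mu = \rank_{\mbA_1}(V_{\mbA_1})_\mu = \dim_{\C(q)} V^q_\mu,
\end{align*}
where the first equality is Proposition \ref{prop:v1wsdecomp}(b) and the second is Proposition \ref{prop:a1wsrank}. Next, Proposition \ref{prop:v1wsdecomp}(a) gives the weight decomposition $V^1 = \bigoplus_{\mu\leq\lambda} V^1_\mu$, and the weight decomposition of the highest weight module $V^q$ is supported on the same set $\{\mu \leq \lambda\}$; hence summing the displayed equality over all $\mu$ yields $\ch V^1 = \ch V^q$.

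This argument has no real obstacle: the substantive work was already done in proving Propositions \ref{prop:a1wsrank} and \ref{prop:v1wsdecomp} together with part (b) of Theorem \ref{thm:classicallimit}. The only point worth a brief remark is that a character is only defined when every weight multiplicity is finite; but the rank equality above shows that $\dim_{\C} V^1_\mu < \infty$ for all $\mu$ if and only if $\dim_{\C(q)} V^q_\mu < \infty$ for all $\mu$, so the hypothesis that makes one side of the asserted equality meaningful automatically makes the other side meaningful as well.
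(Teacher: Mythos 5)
Your argument is correct and is essentially the paper's own proof: both chain $\dim_{\C}V^1_\mu=\rank_{\mbA_1}(V_{\mbA_1})_\mu=\dim_{\C(q)}V^q_\mu$ via Propositions \ref{prop:v1wsdecomp} and \ref{prop:a1wsrank}, and invoke Theorem \ref{thm:classicallimit}(b) to identify $V^1_\mu$ with the $\mu$-weight space of the $\mfU\mfp_n$-module $V^1$ before summing over $\mu$. Your added remark about finiteness of weight multiplicities is a reasonable (and correct) clarification, not a departure from the paper's route.
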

\begin{proof}
Propositions \ref{prop:a1wsrank} and \ref{prop:v1wsdecomp} imply  that $\dim_\C V^1_\mu = \dim_{\C(q)} V^q_\mu$ for each $\mu\in P$. This, along with Theorem \ref{thm:classicallimit}(b), gives us the desired result.
\end{proof}

\begin{corollary} $V^q(\lambda)$ is finite dimensional if and only if $\lambda\in \Lambda^+$. \end{corollary}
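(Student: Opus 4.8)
The plan is to deduce this directly from the classical-limit machinery assembled in this section together with Theorem~\ref{thm:dominant-weight-fin-dim-mod}. The key point is that the character of $V^q(\lambda)$ is already known to agree with the character of a classical highest weight module, so finite-dimensionality can be detected downstairs over $\mathbb{C}$.

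First I would treat the forward direction. Suppose $\lambda\in\Lambda^+$. By Proposition~\ref{prop:irrhighweightfinitedim}, the highest weight vector $v$ of $V^q(\lambda)$ satisfies $f_i^{\lambda(k_i)-\lambda(k_{i+1})+1}v=0$ for all $i\in I$. Pass to the classical limit: form $V_{\mbA_1}=\mfU_{\mbA_1}v\subseteq V^q(\lambda)$ and then $V^1=V_{\mbA_1}/\mbJ_1V_{\mbA_1}$. By Theorem~\ref{thm:classicallimit}(c), $V^1$ is a highest weight $\mfU\mfp_n$-module with highest weight $\lambda$ and highest weight vector $\overline{v}$, and the relation $f_i^{\lambda(k_i)-\lambda(k_{i+1})+1}v=0$ descends to $\overline{f_i}^{\,\lambda(k_i)-\lambda(k_{i+1})+1}\overline{v}=0$. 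Hence $V^1$ satisfies the hypotheses of Proposition~\ref{prop:irrhighweightfinitedimclassical}, so $V^1$ is finite dimensional. By Proposition~\ref{prop:charclasslimit}, $\ch V^q(\lambda)=\ch V^1$, and in particular $\dim_{\C(q)}V^q(\lambda)=\dim_{\C}V^1<\infty$.

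For the converse, suppose $V^q(\lambda)$ is finite dimensional. Again by Proposition~\ref{prop:charclasslimit} we get $\ch V^1=\ch V^q(\lambda)$, so $V^1$ is a finite-dimensional $\mfU\mfp_n$-module. It is a highest weight module with highest weight $\lambda$, hence it is a quotient of the classical Verma module $M(\lambda)$, and therefore $V(\lambda)$—being the unique irreducible quotient of $M(\lambda)$—is also finite dimensional (it is a subquotient, or equivalently a quotient, of $V^1$, or one can argue that any nonzero finite-dimensional highest weight module forces the irreducible one to be finite dimensional). By Theorem~\ref{thm:dominant-weight-fin-dim-mod}, $V(\lambda)$ finite dimensional implies $\lambda\in\Lambda^+$. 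This completes both directions.

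The main obstacle—and the only place requiring care—is the converse: one must ensure that finite-dimensionality of $V^1$ genuinely forces $\lambda\in\Lambda^+$ rather than merely being consistent with it. The clean way is to observe that $V^1$ surjects onto the irreducible $\mfU\mfp_n$-module $V(\lambda)$ (since $V^1$ is highest weight of highest weight $\lambda$, its unique irreducible quotient is $V(\lambda)$), so $\dim V(\lambda)\le\dim V^1<\infty$, and then invoke Theorem~\ref{thm:dominant-weight-fin-dim-mod}. Everything else is a routine assembly of results already proved in the section.
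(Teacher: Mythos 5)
Your proposal is correct and follows essentially the same route as the paper: Proposition \ref{prop:irrhighweightfinitedim} plus the classical limit and Proposition \ref{prop:irrhighweightfinitedimclassical} for the forward direction, and Proposition \ref{prop:charclasslimit} plus Theorem \ref{thm:dominant-weight-fin-dim-mod} for the converse. The only difference is that you spell out the small step the paper leaves implicit, namely that the finite-dimensional highest weight module $V^1$ surjects onto the irreducible $V(\lambda)$, so that Theorem \ref{thm:dominant-weight-fin-dim-mod} (which concerns $V(\lambda)$) indeed applies; this is a welcome clarification, not a different argument.
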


\begin{proof}
Let $V^q = V^q(\lambda)$ with highest weight vector $v$, and suppose that $\lambda\in \Lambda^+$. From Proposition \ref{prop:irrhighweightfinitedim}, we have that $f_i^{\lambda(k_i)-\lambda(k_{i+1}) + 1}v = 0$ for all $i\in I$. Applying the classical limit leads to $\overline{f}_i^{\lambda(k_i)-\lambda(k_{i+1}) + 1}\overline{v} = 0$. Since $V^1$ is a highest weight module, Proposition \ref{prop:irrhighweightfinitedimclassical} gives us that $V^1$ is finite dimensional. Thus, by  Proposition \ref{prop:charclasslimit},  $V^q$ is finite dimensional.

Conversely, suppose that $V^q(\lambda)$ is finite dimensional. By Proposition \ref{prop:charclasslimit}, we have that $V^1$ is also finite dimensional. By Proposition \ref{thm:dominant-weight-fin-dim-mod}, we have that $\lambda\in \Lambda^+$. \end{proof}

\section{Category of Tensor Representations of $\mfU_q\mfp_n$-modules}\label{sec:rensor-representations}

In this final section we discuss the category of tensor representations of $\mfU_q\mfp_n$.  It is shown in \cite{Mo} that the  $\mfp_n$-module $\C(n|n)^{\otimes k}$ is not completely reducible for any $k\geq 2$. We will prove a similar result for $\mfU_q\mfp_n$.

Let $V = \C_q(n|n)$. The action of the generators $t_{ij}$ of $\mfU_q\mfp_n$ on $V$  is given by the following formulas obtained in  \cite{AGG}:
\begin{eqnarray*}
t_{ii}(u_a) &=& \sum_{b=-n}^n q^{\delta_{bi}(1-2p(i)) + \delta_{b,-i}(2p(i)-1)} E_{bb}(u_a); \\
t_{i,-i}(u_a) &=& (q-q^{-1}) \delta_{i>0} E_{-i,i}(u_a); \\
t_{ij}(u_a) & = &  (q-q^{-1}) (-1)^{p(i)} \msE_{ji}(u_a), \text{ if  }|i|\neq |j|.
\end{eqnarray*}
The action of the Drinfeld-Jimbo generators of $\mfU_q\mfp_n$ in (\ref{Uqpngen}) then follows. We can then extend this action to $V^{\otimes k}$ through comultiplication given in Lemma \ref{lem:comult}. 

Recall also from \cite{AGG}  the $\mfU_q\mfp_n$-module homomorphisms $\mfc: \C_q(n|n)^{\otimes 2} \longrightarrow \C_q(n|n)^{\otimes 2}$ and $\mft: \C_q(n|n)^{\otimes 2} \longrightarrow \C_q(n|n)^{\otimes 2}$, where
\begin{align*}
\mfc =  {} & \sum_{a,b=-n}^n (-1)^{p(a)p(b)} E_{ab} \otimes E_{-a,-b}, \\ 
\mft = {} &  \sum_{i,j=-n}^n (-1)^{p(j)} E_{ij} \ot E_{ji} 
 + (q-1) \sum_{i=1}^n \left(  E_{-i,i} \ot E_{i,-i} \right)  \\
& + (q-1) \sum_{i=1}^n \left(  E_{ii} \ot E_{ii} \right)  
 - (q^{-1}-1) \sum_{i=1}^n  \left(  E_{i,-i} \ot E_{-i,i} \right)  \\
& - (q^{-1}-1) \sum_{i=1}^n  \left( E_{-i,-i} \ot E_{-i,-i} \right)  
 + (q-q^{-1}) \sum_{i=1}^{n} \left( E_{ii} \ot E_{-i,-i} \right)   \\
& + (q-q^{-1}) \sum_{|j|<|i|} \left(E_{jj} \ot E_{ii} \right) 
 + (q-q^{-1}) \sum_{|j|<|i|} \left( (-1)^{p(i)p(j)} E_{ji} \ot E_{-j,-i} \right) .
\end{align*}
These maps are then extended to $\mfU_q\mfp_n$-module homomorphisms  $\mfc_i: \C_q(n|n)^{\otimes k} \longrightarrow \C_q(n|n)^{\otimes k}$ and $\mft_i: \C_q(n|n)^{\otimes k} \longrightarrow \C_q(n|n)^{\otimes k}$ by applying $\mfc$ and $\mft$, respectively, to the $i^{th}$ and $(i+1)^{th}$ tensors. We will refer to the map $\mfc$ as the \textit{contraction} map.

\begin{remark} \label{rem-moon}The contraction map in \cite{Mo} requires a sign change. As a result some of the subsequent theorems have to be modified in \cite{Mo}. Namely, the results in Sections 6.1 and 6.2 in \cite{Mo} need to be corrected and the correct version can be obtained by taking $q=1$  of our results in this section. 

\end{remark}
\subsection{Maximal vectors in $V^{\otimes k}$}
In this subsection we describe the complete set of linearly independent maximal vectors in the $\mfU_q\mfp_n$-module $V^{\otimes k}$. Following the ideas in Theorem 3.8 of \cite{Mo}, we will use a $q$-analogue of the Young symmetrizer defined in \cite{Gy} to define these maximal vectors.

Let $S_k$ be the symmetric group on the set $\{1,\hdots,k\}$, and let $S := \{\mss_1,\hdots, \mss_k\}$ where $\mss_i = (i, i+1)$. Recall that the periplectic $q$-Brauer algebra $\mfB_{q,k}$ is generated by $\mst_i$ and $\msc_i$ for $1\leq i\leq k-1$ and satisfies a set of relations listed in Definition 5.1 of \cite{AGG}. The action of $\mfB_{q,k}$ on  $\C_q(n|n)^{\otimes k}$ is given by  $\mst_i$  and $\msc_i$  acting by $\mft_i$ and $\mfc_i$, respectively.

We consider the Hecke algebra $H_k$ as the subalgebra of $\mfB_{q,k}$  generated by $\{h(\mss_i) = \mst_i  \: | \; i = 1,2,\hdots, k-1 \}$ subject to the following relations:
\begin{align*}
&(h(\mss_i) - q)(h(\mss_i) + q^{-1}) = 0,\\
&h(\mss_i)h(\mss_{i+1})h(\mss_i) = h(\mss_{i+1})h(\mss_i)h(\mss_{i+1}).
\end{align*}
If $\sigma$ is a permutation having a reduced decomposition  $\sigma = \mss_{i_1} \cdots \mss_{i_\ell}$ we set $h(\sigma) = h(\mss_{i_1})\cdots h(\mss_{i_\ell})$. Then $h(\sigma)h(\sigma^\prime) = h(\sigma \sigma^\prime)$  if $\ell(\sigma \sigma^\prime) = \ell(\sigma) + \ell(\sigma^\prime)$, where $\sigma,\sigma^\prime \in S_k$ and $\ell(\sigma)$ is the length of the permutation $\sigma$.

Define the following element of $\mfB_{q,k}$: $$\msc_{r,s} \coloneqq h(\sigma_{r,s})\msc_1 h^{-1}(\sigma_{r,s}),$$ where $\sigma_{r,s} \coloneqq (1,r)(2,s)$.

For  $\displaystyle j \in \left\{ 1,\hdots,\left\lfloor \frac{k}{2} \right\rfloor \right\}$ and  two disjoint ordered subsets  $\tilde{r} =\{r_1, \hdots , r_j\}$ and $\tilde{s} =\{s_1, \hdots , s_j\}$ of $\{1,\hdots , k\}$ such that $r_i < s_i$ for all $i=1,\hdots,j$, set $$\msc_{\tilde{r},\tilde{s}} := \msc_{r_1,s_1}\cdots \msc_{r_j,s_j}, \; \; \; \ \msc_{\emptyset,\emptyset} := \id.$$  We set $(\tilde{r},\tilde{s}) = \{(r_1,s_1),\hdots,(r_j,s_j)\}$ and denote by $\mcP(j)$ the set of all  $(\tilde{r},\tilde{s})$ such that the cardinality of both $\tilde{r}$ and $\tilde{s}$ equal $j$. Set $\displaystyle \mcP=\bigcup_{j=0}^{\left\lfloor \frac{k}{2} \right\rfloor} \mcP(j)$.

We follow the common definition of standard tableau as given for instance in \cite{Gy}. If $\lambda$ is a partition of $N$ we write $\lambda \vdash N$.  Let $\lambda  \vdash N$  have length at most $2n$. Following \cite{Gy}, we define two standard tableaux $T_+ = T_+(\lambda)$ and $T_-=T_-(\lambda)$ depending on $\lambda$, where the entries of $T_+$ increase by one across the rows from left to right, and the entries of $T_-$ increase by one down the columns. Let $W_+$ (respectively, $W_-$) be the group of all elements in $S_N$ which permute the entries within each row of $T_+$ (respectively, each column of $T_-$).

Let
\begin{align*}
\mse_+ = \mse_+(\lambda) &\coloneqq  \sum\limits_{\sigma\in W_+}  q^{\ell(\sigma)}h(\sigma),\\
\mse_- = \mse_-(\lambda) &\coloneqq  \sum\limits_{\sigma\in W_-} (-q)^{-\ell(\sigma)}h(\sigma).
\end{align*}
Note that for each $\mss = \mss_i$ in $W_+$, 
\begin{align*}
\mse_+ &= \sum\limits_{\substack{\sigma\in W_+ \\ \mss\sigma > \sigma}}  q^{\ell(\sigma)}(1+qh(\mss))h(\sigma).
\end{align*}
Thus, we have that $(1-q^{-1}h(\mss))\mse_+ = 0$, or in other words, $h(\mss)\mse_+ = q\mse_+$. Hence $$h(\rho)\mse_+ = q^{\ell(\rho)}\,\mse_+$$ 
 for $\rho\in W_+$. With the same  reasoning we obtain analogous identities included  in  the  following Lemma.
\begin{lemma} \label{lem:q-symmetrizer}
For $\rho\in W_+$ and $\rho^\prime\in W_-$,
\begin{align*}
h(\rho)\mse_+ &= \mse_+h(\rho) = q^{\ell(\rho)}\mse_+,\\
h(\rho^\prime)\mse_- &= \mse_-h(\rho^\prime) = (-q)^{-\ell(\rho^\prime)}\mse_-.
\end{align*}
\end{lemma}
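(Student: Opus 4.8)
The plan is to establish the four identities in Lemma \ref{lem:q-symmetrizer} by reducing each to the single-generator relation $h(\mss_i)\mse_+ = q\,\mse_+$ (and its mirror for $\mse_-$), which is already derived in the text immediately preceding the statement. First I would record that the relation $h(\mss)\mse_+ = q\,\mse_+$ holds for every simple reflection $\mss = \mss_i$ that lies in $W_+$; this is exactly what the displayed computation $(1-q^{-1}h(\mss))\mse_+ = 0$ gives. Then, for an arbitrary $\rho \in W_+$, I would pick a reduced expression $\rho = \mss_{i_1}\cdots \mss_{i_{\ell(\rho)}}$ with all factors in $W_+$ (possible because $W_+$ is a parabolic, hence standard-parabolic after relabelling, subgroup of $S_N$ generated by the simple transpositions it contains, and reduced words in a parabolic subgroup use only those simple reflections), so that $h(\rho) = h(\mss_{i_1})\cdots h(\mss_{i_{\ell(\rho)}})$ by the multiplicativity property $h(\sigma)h(\sigma') = h(\sigma\sigma')$ when lengths add. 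Applying $h(\mss_{i_j})\mse_+ = q\,\mse_+$ successively from the right-most factor inward yields $h(\rho)\mse_+ = q^{\ell(\rho)}\mse_+$.

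Next I would handle $\mse_+ h(\rho) = q^{\ell(\rho)}\mse_+$. The cleanest route is an anti-automorphism argument: the Hecke algebra $H_k$ carries the standard $\C(q)$-linear anti-automorphism $*$ fixing each $h(\mss_i)$, so that $h(\sigma)^* = h(\sigma^{-1})$ and $\ell(\sigma^{-1}) = \ell(\sigma)$. Since $\mse_+ = \sum_{\sigma\in W_+} q^{\ell(\sigma)} h(\sigma)$ and $W_+$ is closed under inversion, $\mse_+^* = \mse_+$. Applying $*$ to $h(\rho)\mse_+ = q^{\ell(\rho)}\mse_+$ gives $\mse_+ h(\rho^{-1}) = q^{\ell(\rho)}\mse_+$, and replacing $\rho$ by $\rho^{-1}$ (same length) finishes the first pair of identities. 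If one prefers to avoid invoking $*$, the same conclusion follows by rerunning the one-generator computation on the other side: the identity $\mse_+(1 - q^{-1}h(\mss)) = 0$ comes from grouping the sum over $W_+$ into cosets $\{\sigma, \sigma\mss\}$ with $\sigma\mss > \sigma$, giving $\mse_+ = \sum q^{\ell(\sigma)} h(\sigma)(1 + q h(\mss))$, hence $\mse_+ h(\mss) = q\,\mse_+$, and then induct along a reduced word as before.

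For the $\mse_-$ statements the argument is entirely parallel, replacing $W_+$ by $W_-$ and the weight $q^{\ell(\sigma)}$ by $(-q)^{-\ell(\sigma)}$. The key one-generator fact is that for $\mss \in W_-$ one has $h(\mss)\mse_- = -q^{-1}\mse_-$, obtained by the same coset-pairing: $\mse_- = \sum_{\sigma\in W_-,\,\mss\sigma>\sigma} (-q)^{-\ell(\sigma)}(1 - q^{-1} h(\mss)) h(\sigma)$, so $(1 + q h(\mss))\mse_- = 0$, i.e. $h(\mss)\mse_- = -q^{-1}\mse_-$. Multiplying these along a reduced word for $\rho' \in W_-$ gives $h(\rho')\mse_- = (-q)^{-\ell(\rho')}\mse_-$, and the anti-automorphism (which again fixes $\mse_-$ since $W_-$ is closed under inversion) or the mirror computation gives $\mse_- h(\rho') = (-q)^{-\ell(\rho')}\mse_-$.

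I do not expect any serious obstacle here; the only point requiring a little care is the combinatorial fact that a reduced expression for an element of the parabolic subgroup $W_+$ (resp. $W_-$) can be taken inside the generating set of simple transpositions belonging to $W_+$ (resp. $W_-$), which is what lets us apply the single-generator relation and the length-additivity property of $h(\cdot)$ repeatedly. This is standard Coxeter-group theory (deletion condition / parabolic subgroups), and I would cite it rather than prove it. Everything else is a straightforward induction on $\ell(\rho)$.
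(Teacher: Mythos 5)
Your proposal is correct and follows essentially the same route as the paper: the coset-pairing identity $\mse_+=\sum_{\mss\sigma>\sigma}q^{\ell(\sigma)}(1+qh(\mss))h(\sigma)$ giving $h(\mss)\mse_+=q\,\mse_+$ (and its mirror and $\mse_-$ analogues), followed by induction along a reduced word inside the Young subgroup $W_\pm$, which is exactly the argument sketched in the text preceding the lemma. Your anti-automorphism argument for the right-sided identities is a harmless optional variant, and your explicit attention to the fact that $W_\pm$ are standard parabolic (Young) subgroups, so reduced expressions stay inside them and lengths add, fills in the only detail the paper leaves implicit.
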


Let $T$ be a standard tableaux of shape $\lambda \vdash k$. Denote by $\sigma_{\pm}^T$  the permutation that transforms $T_{\pm}$ to $T$. 
Note that the set $\sigma_{+}^T W_+ (\sigma_{+}^T)^{-1}$ (respectively, $\sigma_{-}^T W_- (\sigma_{-}^T)^{-1}$) consists of all permutations that permute the rows (respectively, columns) of  $T$.

Define $x_T(q)\in H_k$ by
\begin{align*}
x_T(q) \coloneqq h(\sigma_{-}^T)\, \mse_-\left(h(\sigma_{-}^T)\right)^{-1}  h(\sigma_{+}^T)\, \mse_+  \left(h(\sigma_{+}^T)\right)^{-1}.
\end{align*}
Note that there exists $\xi\in\C(q)$, depending on the shape $\lambda$ of the tableau $T$, such that
\begin{align*}
x_T(q)^2 = \xi x_T(q).
\end{align*}
Then the $q$-analogue of the Young symmetrizer as defined in \cite{Gy} is 
\begin{align*}
y_T(q) = \frac{1}{\xi} x_T(q).
\end{align*}

In what follows for a subset $A$ of $\{1,2,...,k \}$ by $A^c$ we denote its complement.
Denote by $\msS\msT((\tilde{r} \cup \tilde{s})^c)$ the set of all standard tableaux of  shape $\mu \vdash k - 2j$, for some $\displaystyle j \in \left\{ 0,1,\hdots,\left\lfloor \frac{k}{2} \right\rfloor \right\}$, with entries in $(\tilde{r} \cup \tilde{s})^c$, where $(\tilde{r}, \tilde{s})\in \mcP(j)$.

For each $\tau\in \msS\msT((\tilde{r} \cup \tilde{s})^c)$, define the associated simple tensor  of $\tau$ by $w_{\tau,\tilde{r},\tilde{s}} := w_1\otimes \hdots \otimes w_k$, where
\begin{align*}
w_i := \begin{cases} u_1 & \text{ if } i\in\tilde{r}\\u_{-1} & \text{ if } i\in\tilde{s}\\ u_j & \text{ if } j\in(\tilde{r}\cup \tilde{s})^c \text{ and } i \text{ is in } j\text{th row of } \tau.
\end{cases}
\end{align*}

We now prove a $q$-anologue of Theorem 3.8 in \cite{Mo}.

\begin{theorem}\label{thm-maximalvectors} Let $n$ and $k$ be positive integers such that $n\geq k$. Then
\begin{align*}
\{y_{\tau}\msc_{\tilde{r},\tilde{s}}w_{\tau,\tilde{r},\tilde{s}} \mid (\tilde{r},\tilde{s})\in \mcP, \, \tau \in \msS\msT((\tilde{r} \cup \tilde{s})^c), \ell(\tau)\leq n\}
\end{align*}
is a linearly independent set of maximal vectors in the $\mfU_q\mfp_n$-module $V^{\otimes k}$.

\end{theorem}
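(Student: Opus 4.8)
The plan is to mirror the structure of the classical argument in Theorem 3.8 of \cite{Mo}, now working over $\C(q)$ and using the $q$-deformed symmetrizers $y_\tau(q)$ and the contraction operators $\mfc_i$. First I would observe that, since $\mft_i$ and $\mfc_i$ are $\mfU_q\mfp_n$-module homomorphisms, any element of the periplectic $q$-Brauer algebra $\mfB_{q,k}$ acts by $\mfU_q\mfp_n$-endomorphisms of $V^{\otimes k}$. Hence $y_\tau \msc_{\tilde r,\tilde s}$ sends maximal vectors to maximal vectors, and it suffices to show (i) that each $w_{\tau,\tilde r,\tilde s}$ becomes a maximal vector after applying $y_\tau\msc_{\tilde r,\tilde s}$, and (ii) that the resulting vectors are linearly independent.

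\textbf{Maximality.} For step (i), I would first check that the simple tensor $w_{\tau,\tilde r,\tilde s}$ itself is killed by the "honest positive" generators. Using the explicit comultiplication in Lemma \ref{lem:comult} and the explicit action of the $t_{ij}$ on $V$, one computes $\Delta^{(k)}(e_i)$, $\Delta^{(k)}(e_{\overline i})$ as sums of tensor operators; on a tensor of basis vectors $u_1$ and $u_{-1}$ (the entries attached to indices in $\tilde r\cup\tilde s$) the operators $e_i,e_{\overline i}$ act nontrivially only for $i=1$, and the contributions are controlled. The key point, exactly as in \cite{Mo}, is that after applying $\msc_{\tilde r,\tilde s}$ the pairs $(u_1,u_{-1})$ sitting in positions $(r_\ell,s_\ell)$ are "contracted", so that $e_i$ and $e_{\overline i}$ for $i\ge 1$ annihilate the image; and then applying the Hecke-algebra symmetrizer $y_\tau$, which by Lemma \ref{lem:q-symmetrizer} satisfies $h(\mss)y_\tau = q\,y_\tau$ or $h(\mss)y_\tau=-q^{-1}y_\tau$ for the appropriate transpositions, forces the remaining obstructions (the terms where $e_i$ or $e_{\overline i}$ hits a repeated entry $u_j$ in the tableau part) to cancel in pairs. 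Concretely: $e_i$ acting on $V^{\otimes k}$ raises a $u_{i+1}$ to a $u_i$; if two adjacent positions in a row of $\tau$ carry $u_i$ and $u_{i+1}$ before and after, the two resulting terms differ by the Hecke generator $\mst$ on those slots up to a power of $q$, and the eigenvalue relation for $y_\tau$ kills the sum. I would carry this out generator by generator ($e_i$ for $i\in I$, then $e_{\overline i}$ for $i\in I$), reducing in each case to a two-slot computation.

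\textbf{Linear independence.} For step (ii), I would argue as in \cite{Mo}: first fix the weight. The vector $y_\tau\msc_{\tilde r,\tilde s}w_{\tau,\tilde r,\tilde s}$ is a weight vector whose weight is determined by the multiset of basis indices appearing, i.e.\ by the shape $\mu$ of $\tau$ together with $j=|\tilde r|$ (each contracted pair contributes $\epsilon_1-\epsilon_1=0$ to the weight but changes the "degree" $k-2j$). So it suffices to prove independence within a fixed $(\mu,j)$. Within that block, I would expand each vector in the standard tensor basis of $V^{\otimes k}$ and exhibit a "leading term": ordering simple tensors appropriately (for instance, lexicographically by the positions of the $u_1$'s and $u_{-1}$'s, then by the tableau filling), one shows that $y_\tau\msc_{\tilde r,\tilde s}w_{\tau,\tilde r,\tilde s}$ contains the simple tensor $w_{\tau,\tilde r,\tilde s}$ (or a canonical representative built from it) with a nonzero coefficient in $\C(q)$ — this uses that $\msc_1$ maps $u_1\otimes u_{-1}$ to $\sum_a (-1)^{p(a)}u_a\otimes u_{-a}$, so it has a diagonal-type term, and that $y_\tau$ applied to a standard tableau has the identity permutation appearing with coefficient $1$ (after the normalization by $\xi$) — and that this leading term is not shared by any other member of the family. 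The hypothesis $n\ge k$ (so that $\ell(\tau)\le n$ is automatic and all the $u_j$ with $j\le k$ are available) guarantees that distinct $(\tau,\tilde r,\tilde s)$ give distinct leading tensors.

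\textbf{Main obstacle.} I expect the principal difficulty to be step (i), the maximality verification — specifically, bookkeeping the cross-terms in $\Delta^{(k)}(e_i)$ and $\Delta^{(k)}(e_{\overline i})$ (which, unlike the classical case, involve the extra summands $-\tfrac{q-q^{-1}}{2}e_{\overline i}\otimes F_{\overline{i+1}}$ appearing in $\Delta(e_i)$) and checking that the $q$-symmetrizer relations of Lemma \ref{lem:q-symmetrizer} exactly annihilate them. The computation with $F_{\overline{i+1}}$ on $V$ needs the formula $F_{\overline i} = \tfrac{-2}{q-q^{-1}}t_{i,-i}$ together with $t_{i,-i}(u_a) = (q-q^{-1})\delta_{i>0}E_{-i,i}(u_a)$, so $F_{\overline i}$ acts on $V$ by a multiple of $E_{-i,i}$; one must track how this interacts with the contracted pairs. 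Once the two-slot identities are isolated, each is a short Hecke-algebra calculation, but assembling them into a clean global cancellation is where the care lies. The linear independence step, by contrast, is essentially the classical argument of \cite{Mo} with $q$'s inserted, since specializing $q\to 1$ recovers Moon's linearly independent family and independence is an open condition.
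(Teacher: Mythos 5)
Your overall plan (maximality of $y_\tau\msc_{\tilde r,\tilde s}w_{\tau,\tilde r,\tilde s}$ via the symmetrizer, then independence of the family) matches the paper, but the cancellation mechanism you propose for maximality is not the one that works, and as stated it fails. In $w_{\tau,\tilde r,\tilde s}$ every entry in row $j$ of the tableau part is the same letter $u_j$, so "two adjacent positions in a row carrying $u_i$ and $u_{i+1}$" never occurs, and the terms produced by $e_{j-1}$ hitting the various row-$j$ slots do not cancel against each other in pairs. What has to be shown is that each individual tensor $\msv$ obtained from $\msc_{\tilde r,\tilde s}w$ by lowering one $u_j$ to $u_{j-1}$ (thereby duplicating the row-$(j-1)$ letter) already satisfies $y_\tau\msv=0$. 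This cannot come from left eigenvalue relations of the form $h(\mss)y_\tau=\pm q^{\pm1}y_\tau$ — those only rescale $y_\tau\msv$, and Lemma \ref{lem:q-symmetrizer} is stated for $\mse_\pm$, not for the conjugated product $h(\sigma_-^\tau)\mse_-h(\sigma_-^\tau)^{-1}h(\sigma_+^\tau)\mse_+h(\sigma_+^\tau)^{-1}$ defining $y_\tau$. The argument the paper uses (the quantum version of "a Young symmetrizer kills a tensor symmetric in two entries of one column") is: for each row permutation $\psi\in W_+$, the tensor $h(\sigma_+^\tau)h(\psi)h(\sigma_+^\tau)^{-1}\msv$ has two equal even entries lying in a single column of $\tau$, so the corresponding conjugated column transposition $h(a,b)$ fixes it up to the factor $q^{\ell(\rho)}$, while the column antisymmetrizer absorbs it as $\mse_-h(\rho)=(-q)^{-\ell(\rho)}\mse_-$; combining the two forces $h(\sigma_-^\tau)\mse_-h(\sigma_-^\tau)^{-1}h(\sigma_+^\tau)h(\psi)h(\sigma_+^\tau)^{-1}\msv$ to equal $-q^{2\ell(\rho)}$ times itself, hence to vanish, and summing over $\psi$ gives $y_\tau\msv=0$. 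Without this column-antisymmetrizer step your maximality verification has a genuine hole. Relatedly, your opening claim that $w_{\tau,\tilde r,\tilde s}$ itself is annihilated by the positive generators is false: $e_{j-1}$ lowers a row-$j$ entry, so $e_{j-1}w\neq0$ in general; maximality holds only after applying $y_\tau\msc_{\tilde r,\tilde s}$, which commutes with the $\mfU_q\mfp_n$-action.

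On nonvanishing and linear independence your route also diverges from the paper's: the paper does no leading-term computation, but instead applies the classical limit machinery of Section \ref{sec:classical-limit} (in particular Lemma \ref{lem:v1wsdecomp}(b)) to reduce both $\theta\neq0$ and $\C(q)$-linear independence to Moon's Theorem 3.8 in \cite{Mo}. Your leading-term argument could probably be pushed through, but it requires verifying a nonzero $\C(q)$-coefficient on a distinguished simple tensor for every $(\tau,\tilde r,\tilde s)$, which is considerably more work; your parenthetical remark that "independence is an open condition at $q=1$" is the right idea, but to be rigorous it needs exactly the $\mbA_1$-lattice argument (clearing powers of $q-1$ before specializing) that the paper's Lemma \ref{lem:v1wsdecomp}(b) supplies. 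Finally, your concern about the extra term $-\tfrac{q-q^{-1}}{2}e_{\overline i}\otimes F_{\overline{i+1}}$ in $\Delta(e_i)$ is legitimate; in the paper it is absorbed into the statement that all terms in which the positive generator acts on a contracted pair vanish because $\msc V^{\otimes 2}$ is the trivial module, while on the tableau factors only the lowering $u_j\mapsto u_{j-1}$ survives, so any completed write-up should make that bookkeeping explicit.
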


\begin{proof} 
Let $w = w_{\tau,\tilde{r},\tilde{s}}$ and $\theta = y_{\tau}\msc_{\tilde{r},\tilde{s}}w$. Note that the weight of $\theta$ is the same as the weight of $w$ since $y_{\tau}c_{\tilde{r},\tilde{s}}$ commutes with the action of $\mfU_q\mfp_n$. Note that the fact that $\theta \neq 0$ and the linear independence property follow by applying the classical limit and using Theorem 3.8 in \cite{Mo}.

To show that $\theta$ is a maximal vector, it suffices to show that $\theta$ is annihilated by each root vector $e_{i}$ and $e_{\overline{i}}$, $i\in I$. The action of $e_{i}$ and $e_{\overline{i}}$ on $w$ can be explicitly written as follows
\begin{align}
\begin{split} \label{explict-uqpn-action}
e_{\overline{i}}(w_1\otimes\hdots\otimes w_k) &= \sum\limits_{a=1}^k (-1)^{p(w_1)+\hdots+p(w_{a-1})} q^{k_i}w_1\otimes\hdots\otimes q^{k_i}w_{a-1}\otimes e_{\overline{i}}w_{a}\otimes q^{k_{i+1}}w_{a+1}\otimes\hdots\otimes q^{k_{i+1}}w_k,\\
e_{i}(w_1\otimes\hdots\otimes w_k) &= \sum\limits_{a=1}^k  q^{k_i}w_1\otimes\hdots\otimes q^{k_i}w_{a-1}\otimes e_{i}w_{a}\otimes q^{k_{i+1}}w_{a+1}\otimes\hdots\otimes q^{k_{i+1}}w_k\\
- &\frac{q-q^{-1}}{2}\sum\limits_{\substack{a,b=1 \\ a < b}}^k (-1)^{p(w_1)+\hdots+p(w_{a-1})} q^{k_i}w_1\otimes\hdots\otimes q^{k_i}w_{a-1}\otimes e_{\overline{i}}w_{a}\otimes q^{k_{i+1}}w_{a+1}\otimes\hdots \\&\otimes q^{k_{i+1}}w_{b-1}\otimes F_{\overline{i+1}}w_{b}\otimes q^{k_{i+1}}w_{b+1}\otimes ...\otimes  q^{k_{i+1}}w_k.
\end{split} 
\end{align}

Let $x = e_i$ or $x= e_{\overline{i}}$. We  look at how  $x$ acts on $\msc_{\tilde{r},\tilde{s}}w$.  Note that $\msc V^{\otimes 2}$ is the trivial 
$\mfU_q\mfp_n$-module  $ \C(q)$. Therefore the sum of the terms with $x$ acting on a pair of contracted tensor factors is zero. Also, from \eqref{explict-uqpn-action}, we see that the action of $x$ on non-contracted tensor factors, or in other words where $w_i\in V_0$, those specific terms in the summation will be zero, except when $x=e_{j-1}$ for some $j\geq 2$. Thus $x$ either annihilates $\msc_{\tilde{r},\tilde{s}}w$ or produces a sum of tensors which are obtained by applying $x = e_{j-1}$, for some $j\geq 2$, to a factor unaffected by $\msc_{\tilde{r},\tilde{s}}$. Each of those tensors has one factor whose subscript is in $(\tilde{r} \cup \tilde{s})^c$ and has been lowered by one, so $u_j$ has been changed to $u_{j-1}$. Denote such a tensor by $\msv$. We want to show that $y_\tau\msv = 0$. 

Fix $\psi \in W_+$. There are two factors of the tensor $h(\sigma_{+}^\tau)h(\psi)h(\sigma_{+}^\tau)^{-1}\msv$ that have the same $u_j$. Then there exists a transposition $(a, b) = (\sigma_{-}^\tau) \rho\, (\sigma_{-}^\tau)^{-1} \in  (\sigma_{-}^\tau)W_-(\sigma_{-}^\tau)^{-1}$ which permutes these two factors. Using that $\ell (a,b) = \ell (\rho)$, we obtain
 $$ h(\sigma_{-}^\tau)\,h(\rho)\,h(\sigma_{-}^\tau)^{-1}\,h(\sigma_{+}^\tau)h(\psi)h(\sigma_{+}^\tau)^{-1}\msv = h(a,b)\,h(\sigma_{+}^\tau)h(\psi)h(\sigma_{+}^\tau)^{-1}\msv = q^{\ell(\rho)} h(\sigma_{+}^\tau)h(\psi)h(\sigma_{+}^\tau)^{-1}\msv.$$ 

By Lemma \ref{lem:q-symmetrizer}, we have that for $\rho\in W_-$,
\begin{align*}
(-q)^{-\ell(\rho)}\mse_- = \mse_- \, h(\rho) = \sum\limits_{\sigma\in W_-} (-q)^{-\ell(\sigma)}h(\sigma)\,h(\rho) 
\end{align*}
and hence
\begin{align*}
h(\sigma_{-}^\tau)\mse_-\,h(\sigma_{-}^\tau)^{-1} = \sum\limits_{\sigma\in W_-} (-q)^{-\ell(\sigma)+\ell(\rho)}h(\sigma_{-}^\tau) \, h(\sigma)\, h(\sigma_{-}^\tau)^{-1}\, h(\sigma_{-}^\tau) \,h(\rho)\, h(\sigma_{-}^\tau)^{-1}
\end{align*}
Thus we have
\begin{align*}
h(\sigma_{-}^\tau)\mse_-\,h(\sigma_{-}^\tau)^{-1}h(\sigma_{+}^\tau)h(\psi)h(\sigma_{+}^\tau)^{-1}\msv &= \sum\limits_{\sigma\in W_-} (-q)^{-\ell(\sigma)+\ell(\rho)}h(\sigma_{-}^\tau) \, h(\sigma)\, h(\sigma_{-}^\tau)^{-1}\, h(\sigma_{-}^\tau) \,h(\rho)\, h(\sigma_{-}^\tau)^{-1}h(\sigma_{+}^\tau)h(\psi)h(\sigma_{+}^\tau)^{-1}\msv\\
&= \sum\limits_{\sigma\in W_-} (-q)^{-\ell(\sigma)+\ell(\rho)}q^{\ell(\rho)} h(\sigma_{-}^\tau) \, h(\sigma)\, h(\sigma_{-}^\tau)^{-1}\, h(\sigma_{+}^\tau)h(\psi)h(\sigma_{+}^\tau)^{-1}\msv\\
&=  (-1)^{\ell(\rho)}q^{2\ell(\rho)}\sum\limits_{\sigma\in W_-}  (-q)^{-\ell(\sigma)} h(\sigma_{-}^\tau) \, h(\sigma)\, h(\sigma_{-}^\tau)^{-1}\,h(\sigma_{+}^\tau)h(\psi)h(\sigma_{+}^\tau)^{-1}\msv\\
&= (-1)^{\ell(\rho)}q^{2\ell(\rho)}h(\sigma_{-}^\tau)\mse_-\,h(\sigma_{-}^\tau)^{-1}h(\sigma_{+}^\tau)h(\psi)h(\sigma_{+}^\tau)^{-1}\msv.
\end{align*}
Hence,  $$h(\sigma_{-}^\tau)\mse_-\,h(\sigma_{-}^\tau)^{-1}h(\sigma_{+}^\tau)h(\psi)h(\sigma_{+}^\tau)^{-1}\msv = 0$$ for each $\psi \in W_+$. Therefore, we have that
\begin{align*}
 h(\sigma_{-}^\tau)\mse_-\,h(\sigma_{-}^\tau)^{-1}h(\sigma_{+}^\tau)\mse_+\,h(\sigma_{+}^\tau)^{-1}\msv = \sum\limits_{\psi\in W_+} q^{\ell(\psi)}h(\sigma_{-}^\tau)\mse_-\,h(\sigma_{-}^\tau)^{-1}h(\sigma_{+}^\tau)h(\psi)h(\sigma_{+}^\tau)^{-1}\msv = 0.
\end{align*} This concludes the proof of $y_\tau\msv = 0$. \end{proof}

\subsection{Decomposition of $V^{\otimes 2}$}

By Theorem \ref{thm-maximalvectors}, the following vectors are linearly independent maximal vectors of the $\mfU_q\mfp_n$-module $V^{\otimes 2}$:
\begin{align*}
\ytableausetup{smalltableaux}
\theta_1^q &= y_{\ytableaushort{12}}(u_1\otimes u_1) = u_1\otimes u_1,\\
\theta_2^q &= y_{\ytableaushort{1,2}}(u_1\otimes u_2) = \frac{1}{1+q^{-2}}(q^{-1}u_1\otimes u_2 - u_2\otimes u_1),\\
\theta_3^q &= \msc_1(u_1\otimes u_{-1}) = \sum_{i=-n}^n u_i \otimes u_{-i},
\end{align*}
where 
\begin{align*}
\ytableausetup{smalltableaux}
 y_{\ytableaushort{12}} &= \frac{1}{1+q^2}(1 + q\mst_1),\\
  y_{\ytableaushort{1,2}} &= \frac{1}{1+q^{-2}}(1 - q^{-1}\mst_1).\\
\end{align*}

\begin{proposition} \label{prop-max}
The vectors $\theta_1^q, \theta_2^q, \theta_3^q$ form a complete set of linearly independent maximal vectors of the $\mfU_q\mfp_n$-module $V^{\otimes 2}$. Moreover,  $\theta_1^q, \theta_3^q \in y_{\ytableaushort{12}}V^{\otimes 2}$ and  $\theta_2^q \in  y_{\ytableaushort{1,2}}V^{\otimes 2}$.

\end{proposition}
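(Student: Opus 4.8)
The plan is to separate the statement into the two ``$y$-eigenspace'' containments, which are essentially formal, and the completeness assertion, which is the substance and which I would establish by a classical-limit argument combined with rank semicontinuity.

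First, by Theorem \ref{thm-maximalvectors} specialized to $k=2$ (for $k=2$ the set $\mcP(0)$ contributes the tableaux $\ytableaushort{12}$ and $\ytableaushort{1,2}$ filled respectively by $u_1\ot u_1$ and $u_1\ot u_2$, and $\mcP(1)$ contributes $\msc_1(u_1\ot u_{-1})$), the vectors $\theta_1^q,\theta_2^q,\theta_3^q$ are linearly independent maximal vectors of $V^{\ot 2}$ --- indeed they lie in the three distinct weight spaces $2\eps_1$, $\eps_1+\eps_2$, $0$. The containments $\theta_1^q=y_{\ytableaushort{12}}(u_1\ot u_1)\in y_{\ytableaushort{12}}V^{\ot 2}$ and $\theta_2^q=y_{\ytableaushort{1,2}}(u_1\ot u_2)\in y_{\ytableaushort{1,2}}V^{\ot 2}$ hold by construction. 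For $\theta_3^q$ I would check $\mst_1\theta_3^q=q\theta_3^q$ --- either from the relation tying $\mst_1$ to $\msc_1$ in the periplectic $q$-Brauer algebra $\mfB_{q,k}$ (Definition 5.1 of \cite{AGG}), or by a direct computation with the explicit formula for $\mft$ applied to $\theta_3^q=\msc_1(u_1\ot u_{-1})=\sum_{i=-n}^n u_i\ot u_{-i}$ --- whence $y_{\ytableaushort{12}}\theta_3^q=\frac{1}{1+q^2}(1+q\mst_1)\theta_3^q=\theta_3^q$, so $\theta_3^q\in y_{\ytableaushort{12}}V^{\ot 2}$. It then remains to prove that the space $M^q$ of maximal vectors of $V^{\ot 2}$ has dimension exactly $3$; since we already have three linearly independent ones, it suffices to show $\dim_{\C(q)}M^q\le 3$.

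For this I would pass to the classical limit. Let $L\subset V^{\ot 2}$ be the $\mbA_1$-span of the basis $\{u_i\ot u_j\}$. Using the explicit action of the $t_{ij}$ on $V$ recalled at the start of this section one sees that the Drinfeld--Jimbo generators \eqref{Uqpngen} act on $V$ with coefficients in $\mbA_1$ (the denominators $q-q^{-1}$ cancel), and since the coefficients in the comultiplication formulas of Lemma \ref{lem:comult} are $1$ and $\pm\frac{q-q^{-1}}{2}\in\mbA_1$, every generator of $\mfU_{\mbA_1}$ preserves $L$; setting $q=1$ in these same formulas identifies $L/(q-1)L$ with $\C(n|n)^{\ot 2}$ as a $\mfU\mfp_n$-module (the $V^{\ot 2}$ instance of the classical-limit comparison behind Theorem 4.1 of \cite{AGG} and Theorem \ref{thm:classicallimit}). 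Consider now the $\mbA_1$-linear map $\Phi\colon L\to L^{\oplus 2(n-1)}$ given by $v\mapsto(e_1v,\dots,e_{n-1}v,e_{\overline1}v,\dots,e_{\overline{n-1}}v)$. Then $M^q=\ker(\Phi\ot_{\mbA_1}\C(q))$, while $\ker(\Phi\ot_{\mbA_1}\C)$ is precisely the space of maximal vectors of the $\mfp_n$-module $\C(n|n)^{\ot 2}$, because $\mfU\mfp_n^+$ is generated by the root vectors $e_i,e_{\overline i}$ and these are the classical limits of the $e_i,e_{\overline i}$ by Theorem \ref{thm:classicallimit}. As $L$ is free of rank $(2n)^2$ over $\mbA_1$ and the rank of an $\mbA_1$-matrix cannot increase under the specialization $q\mapsto1$, we obtain $\dim_{\C(q)}M^q=(2n)^2-\rank_{\C(q)}\Phi\le(2n)^2-\rank_{\C}(\Phi|_{q=1})=\dim_{\C}\ker(\Phi\ot_{\mbA_1}\C)$.

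Finally, by Moon's Theorem 3.8 in \cite{Mo} (with the sign correction noted in Remark \ref{rem-moon}) the $\mfp_n$-module $\C(n|n)^{\ot 2}$ has exactly three linearly independent maximal vectors, so $\dim_{\C}\ker(\Phi\ot_{\mbA_1}\C)=3$; combined with the displayed inequality this gives $\dim_{\C(q)}M^q\le 3$, hence $\dim_{\C(q)}M^q=3$ and $\theta_1^q,\theta_2^q,\theta_3^q$ form a basis of $M^q$. I expect the main obstacle to be the middle paragraph's bookkeeping: verifying cleanly that the action on the lattice $L$ carries no residual factor of $q-q^{-1}$ and making the identification $L/(q-1)L\cong\C(n|n)^{\ot 2}$ precise enough that the maximal-vector conditions on the quantum and classical sides really are the two specializations of one $\mbA_1$-linear map; once that is pinned down, the rank inequality and Moon's count do the rest.
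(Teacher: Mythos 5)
Your proposal is correct and follows essentially the same route as the paper: the containments are verified directly (your check $\mst_1\theta_3^q=q\theta_3^q$ is exactly the identity $y_{\ytableaushort{12}}\msc_1=\msc_1$ the paper uses later), and completeness is obtained by specializing to $q=1$ and invoking Moon's count of exactly three classical maximal vectors in $\C(n|n)^{\otimes 2}$ (with the sign correction of Remark \ref{rem-moon}). Your explicit $\mbA_1$-lattice and rank-semicontinuity argument is simply a more detailed way of justifying the bound that the paper attributes to its classical-limit machinery (Proposition \ref{prop:charclasslimit}).
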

\begin{proof}
Applying the classical limit to $\theta^q_i$ yields the linearly independent maximal vectors $\theta_i$ in \cite{Mo} (after appropriate sign change for $\theta^q_3$, see Remark \ref{rem-moon}). Since the complete set of linearly independent maximal vectors of the $\mfU\mfp_n$-module $V^{\otimes 2}$ has exactly three vectors, the result follows from Proposition \ref{prop:charclasslimit}. The second statement is subject to a direct verification.
\end{proof}

In what follows we will show that $V^{\otimes 2} $ is isomorphic to the direct sum of the two indecomposable representations, $y_{\ytableaushort{12}}V^{\otimes 2}$ and $ y_{\ytableaushort{1,2}}V^{\otimes 2}$.

\begin{proposition} \label{y1-2-V2-red-indecomposable} The module $y_{\ytableaushort{1,2}}V^{\otimes 2}$ is reducible and indecomposable. More precisely, there is a non-split exact sequence of  $\mfU_q \mfp_n$-modules 
\begin{equation} \label{eq-seq1}
0 \longrightarrow V^q(\epsilon_1 + \epsilon_2)\longrightarrow y_{\ytableaushort{1,2}}V^{\otimes 2} \longrightarrow V^q(0) \longrightarrow 0.
\end{equation}
 \end{proposition}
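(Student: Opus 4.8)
The plan is to identify $y_{\ytableaushort{1,2}}V^{\otimes 2}$ explicitly as a $3$-dimensional (as a $\C(q)$-vector space, not over the Cartan) submodule and analyze its structure directly. First I would compute a basis: since $y_{\ytableaushort{1,2}} = \frac{1}{1+q^{-2}}(1-q^{-1}\mst_1)$ is (a scalar multiple of) an idempotent projecting onto the "antisymmetric" part, the space $y_{\ytableaushort{1,2}}V^{\otimes 2}$ is spanned by the vectors $y_{\ytableaushort{1,2}}(u_a\otimes u_b)$ for $|a|\le |b|$ (with suitable conventions), and by Proposition \ref{prop-max} it contains the maximal vector $\theta_2^q$ of weight $\epsilon_1+\epsilon_2$ and, after a short computation, a vector of weight $0$ mapping to the maximal vector of $V^q(0)$ under the quotient. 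The weight $\epsilon_1+\epsilon_2$ piece generates a highest weight submodule $W$; one checks using the classical limit (Proposition \ref{prop:charclasslimit}) together with Moon's description in \cite{Mo} that $\dim_{\C(q)} W$ matches $\dim_{\C} V(\epsilon_1+\epsilon_2)$, hence $W \cong V^q(\epsilon_1+\epsilon_2)$ (it is automatically irreducible because its classical limit is).

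Next I would produce the maps in the sequence \eqref{eq-seq1}. The inclusion $V^q(\epsilon_1+\epsilon_2) \hookrightarrow y_{\ytableaushort{1,2}}V^{\otimes 2}$ is the submodule $W$ just constructed. For the surjection onto $V^q(0)$: the one-dimensional trivial module $V^q(0) = \C(q)$ appears because the contraction map $\mfc\colon V^{\otimes 2}\to\C_q(n|n)^{\otimes 2}$ has image the trivial module (as recorded in the proof of Theorem \ref{thm-maximalvectors}, where it is noted that $\mfc V^{\otimes 2}$ is the trivial $\mfU_q\mfp_n$-module), and $\theta_3^q$ spans a trivial submodule; restricting $\mfc$ appropriately, or equivalently using the $\mfU_q\mfp_n$-module homomorphism $V^{\otimes 2}\to \C(q)$ dual to the inclusion of $\theta_3^q$, gives a nonzero map $y_{\ytableaushort{1,2}}V^{\otimes 2}\to V^q(0)$; it is surjective because $y_{\ytableaushort{1,2}}V^{\otimes 2}$ is not contained in its kernel (the weight-$0$ basis vector is not in $W$). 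Exactness in the middle and on the left is then a dimension count: $\dim_{\C(q)} y_{\ytableaushort{1,2}}V^{\otimes 2} = \dim_{\C(q)} V^q(\epsilon_1+\epsilon_2) + 1$, again via the classical limit and \cite{Mo}.

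Finally, to see that the sequence is non-split, suppose it splits. Then $y_{\ytableaushort{1,2}}V^{\otimes 2} \cong V^q(\epsilon_1+\epsilon_2)\oplus V^q(0)$ would contain two linearly independent maximal vectors: $\theta_2^q$ of weight $\epsilon_1+\epsilon_2$ and a second one of weight $0$ spanning the trivial summand. But by Proposition \ref{prop-max}, $\theta_2^q$ is the unique maximal vector (up to scalar) of $V^{\otimes 2}$ lying in $y_{\ytableaushort{1,2}}V^{\otimes 2}$ — the other two maximal vectors $\theta_1^q,\theta_3^q$ lie in $y_{\ytableaushort{12}}V^{\otimes 2}$. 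Hence $y_{\ytableaushort{1,2}}V^{\otimes 2}$ has a one-dimensional space of maximal vectors, so it is indecomposable, and in particular the sequence does not split. Reducibility is clear since $W$ is a proper nonzero submodule.

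The main obstacle I expect is the bookkeeping needed to pin down the submodule generated by the weight-$0$ vector and to verify that the contraction-type map descends to a well-defined surjection $y_{\ytableaushort{1,2}}V^{\otimes 2}\to V^q(0)$ with the stated kernel; this requires knowing precisely which basis vectors of $y_{\ytableaushort{1,2}}V^{\otimes 2}$ survive and how $\mfc$ and $\mft$ interact on them. All of this is transported from the $q=1$ picture in \cite{Mo} via Proposition \ref{prop:charclasslimit}, so the argument is essentially a deformation argument once the characters and maximal-vector counts are matched; the only genuinely $q$-dependent input is the explicit formula for $y_{\ytableaushort{1,2}}$ and the module homomorphism property of $\mfc$ and $\mft$ recorded above.
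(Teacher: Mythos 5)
Your proposal is correct and follows essentially the same route as the paper: the submodule is $\mfU_q\mfp_n\theta_2^q\cong V^q(\epsilon_1+\epsilon_2)$, the quotient $V^q(0)$ arises from the restriction of the contraction $\msc_1$ (whose kernel is identified using the maximal-vector count of Proposition \ref{prop-max}), and non-splitness follows from the uniqueness, up to scalar, of the maximal vector $\theta_2^q$ in $y_{\ytableaushort{1,2}}V^{\otimes 2}$. The only cosmetic difference is that you establish exactness by a dimension count through the classical limit, whereas the paper defines the kernel $\mcN$ of $\msc_1$ directly and shows $\mcN=\mfU_q\mfp_n\theta_2^q$.
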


\begin{proof}
We have that $$\msc_1 y_{\ytableaushort{1,2}}V^{\otimes 2} \subset \msc_1 V^{\otimes 2}\cong \C(q).$$ However, since \begin{align} \label{contractionimage}
\msc_1 y_{\ytableaushort{1,2}} (u_1\otimes u_{-1}) = \frac{1}{1+q^{-2}} \msc_1[q^{-2}u_1\otimes u_{-1} - u_{-1}\otimes u_{-1}] = \theta_3^q \neq 0,
\end{align} it follows that $\msc_1 y_{\ytableaushort{1,2}}V^{\otimes 2}$  cannot be zero. Therefore, $\msc_1 y_{\ytableaushort{1,2}}V^{\otimes 2} \cong \C(q)$. Consider the restriction of the contraction map $\msc_1$ to $y_{\ytableaushort{1,2}}V^{\otimes 2}$, and let $\mcN$ denote the kernel of this restriction. Then  $$y_{\ytableaushort{1,2}}V^{\otimes 2}/\mcN \cong \C(q) \cong V^q(0).$$ 

By Proposition \ref{prop-max}, $\theta_2^q \in \mcN$ and $\theta_1^q, \theta_3^q \notin \mcN$. Hence, by the same proposition,  $\theta_2^q$ is the only, up to a scalar multiple, maximal vector in $\mcN$, in particular, $\mcN$ is simple. Thus $$V^q(\epsilon_1 + \epsilon_2) \cong \mfU_q\mfp_n\theta_2^q = \mcN \subsetneq y_{\ytableaushort{1,2}}V^{\otimes 2}. $$ This implies the exact sequence \eqref{eq-seq1}. The sequence does not split because $y_{\ytableaushort{1,2}}V^{\otimes 2} $ has a unique up to a scalar multiple maximal vector.
\end{proof}

\begin{proposition} \label{y12-V2-red-indecomposable} The module $y_{\ytableaushort{12}}V^{\otimes 2}$ is reducible and indecomposable. More precisely, there is a non-split short exact sequences of $\mfU_q \mfp_n$-modules 
\begin{equation} \label{eq-seq2}
0 \longrightarrow V^q(0) \longrightarrow y_{\ytableaushort{12}}V^{\otimes 2} \longrightarrow V^q(2\epsilon_1) \longrightarrow 0.
\end{equation}

\end{proposition}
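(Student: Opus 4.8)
The plan is to follow the template of the proof of Proposition~\ref{y1-2-V2-red-indecomposable}, except that the trivial constituent now sits at the \emph{bottom} of the two‑step filtration and the roles of $\theta_1^q$ and $\theta_3^q$ are interchanged. First I would produce the submodule: since $\msc_1$ is a $\mfU_q\mfp_n$‑module homomorphism and $\msc_1 V^{\otimes 2}$ is the trivial module $\C(q)$, the vector $\theta_3^q=\msc_1(u_1\ot u_{-1})$ is a nonzero element of $\msc_1V^{\otimes 2}$ and hence spans it, while by Proposition~\ref{prop-max} it lies in $y_{\ytableaushort{12}}V^{\otimes 2}$. Thus $\C(q)\theta_3^q=\msc_1V^{\otimes 2}$ is a submodule of $y_{\ytableaushort{12}}V^{\otimes 2}$ isomorphic to $V^q(0)$, which gives the injection in \eqref{eq-seq2}. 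Dually, $\theta_1^q$ has weight $2\epsilon_1\neq 0$, so its image $\bar\theta_1^q$ in $Q:=y_{\ytableaushort{12}}V^{\otimes 2}/\C(q)\theta_3^q$ is nonzero, and since $e_i\theta_1^q=e_{\overline i}\theta_1^q=0$ it is a maximal vector of weight $2\epsilon_1$; therefore $Q$ surjects onto $V^q(2\epsilon_1)$, and $V^q(2\epsilon_1)$ is finite dimensional because $2\epsilon_1\in\Lambda^+$.

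The whole statement then reduces to the single claim that $\theta_1^q$ is a cyclic generator, i.e. $\mfU_q\mfp_n\theta_1^q=y_{\ytableaushort{12}}V^{\otimes 2}$. Indeed, granting this, $Q$ is a highest weight module of highest weight $2\epsilon_1$ and dimension $\dim_{\C(q)}y_{\ytableaushort{12}}V^{\otimes 2}-1$; a dimension count (see the next paragraph, together with the standard fact that the classical super‑symmetric square $S^2(\C(n|n))$ of the natural $\mfp_n$‑module has length two, so $\dim_{\C}V(2\epsilon_1)=2n^2-1$) and $\dim_{\C(q)}V^q(2\epsilon_1)=\dim_{\C}V(2\epsilon_1)$ from Proposition~\ref{prop:charclasslimit} force $Q\cong V^q(2\epsilon_1)$, proving \eqref{eq-seq2}. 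Moreover the sequence cannot split: a direct summand complementary to $\C(q)\theta_3^q$ would have to contain $\theta_1^q$, hence would be all of $y_{\ytableaushort{12}}V^{\otimes 2}$, which is absurd; in particular $y_{\ytableaushort{12}}V^{\otimes 2}$ is indecomposable.

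To establish the cyclicity I would pass to the classical limit. The operator $y_{\ytableaushort{12}}=(1+q\mst_1)/(1+q^2)$ has coefficients regular at $q=1$, and the lattice $\bigoplus_{i,j}\mbA_1(u_i\ot u_j)$ is stable under $\mfU_{\mbA_1}$ since all structure constants of the $t_{ij}$‑action lie in $\Z[q,q^{-1}]$. Hence $\mfU_{\mbA_1}\theta_1^q$ is an $\mbA_1$‑form of $\mfU_q\mfp_n\theta_1^q$ whose classical limit is the cyclic $\mfU\mfp_n$‑module generated by $\theta_1=u_1\ot u_1$ inside $S^2(\C(n|n))$, and $y_{\ytableaushort{12}}V^{\otimes 2}$ itself has an $\mbA_1$‑form degenerating to $S^2(\C(n|n))$; by the rank‑preservation arguments of Section~\ref{sec:classical-limit} (as in Propositions~\ref{prop:a1wsrank}, \ref{prop:v1wsdecomp}, \ref{prop:charclasslimit}) one gets $\dim_{\C(q)}\mfU_q\mfp_n\theta_1^q=\dim_{\C}\mfU\mfp_n\theta_1$ and $\dim_{\C(q)}y_{\ytableaushort{12}}V^{\otimes 2}=\dim_{\C}S^2(\C(n|n))=2n^2$. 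It therefore suffices to check the purely classical fact that $u_1\ot u_1$ generates $S^2(\C(n|n))$ over $\mfp_n$: iterating the $f_i$ one reaches every symmetrized product of two $u_a$ with $a>0$, i.e. all of $S^2(V_{\bar 0})$; applying the $F_{\overline j}$ to these yields the mixed tensors spanning $V_{\bar 0}\cdot V_{\bar 1}$; and one more round of $F_{\overline j}$'s produces the antisymmetric products of two $u_{-a}$, i.e. $\Lambda^2(V_{\bar 1})$, exhausting $S^2(\C(n|n))$.

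The main obstacle is precisely this cyclicity step. In contrast to Proposition~\ref{y1-2-V2-red-indecomposable}, the module $y_{\ytableaushort{12}}V^{\otimes 2}$ carries \emph{two} independent maximal vectors — those of weights $2\epsilon_1$ and $0$ coming from Proposition~\ref{prop-max} — so neither indecomposability nor non‑splitting is forced by a count of maximal vectors; one genuinely must see that $\theta_3^q\in\mfU_q\mfp_n\theta_1^q$, which is why the classical‑limit reduction above (or, equivalently, an explicit application of the lowering operators $F_{\overline j}$ and $f_i$ to $\theta_1^q$ yielding a nonzero scalar multiple of $\theta_3^q$) is needed. Everything else is a routine transcription of the argument already used for $y_{\ytableaushort{1,2}}V^{\otimes 2}$.
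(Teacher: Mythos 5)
Your proposal is correct in substance and follows the same skeleton as the paper's proof: the submodule is $\msc_1V^{\otimes 2}=\C(q)\,\theta_3^q\cong V^q(0)$, the quotient is $V^q(2\epsilon_1)$ with highest weight vector the image of $\theta_1^q$, and non-splitting comes from the fact that $\theta_1^q$ generates $y_{\ytableaushort{12}}V^{\otimes 2}$. Where you differ is in how the two nontrivial claims are substantiated. The paper stays entirely on the quantum side: it uses the identity $y_{\ytableaushort{12}}\msc_1=\msc_1$ to place $\msc_1V^{\otimes 2}$ inside $y_{\ytableaushort{12}}V^{\otimes 2}$, identifies the quotient with $V^q(2\epsilon_1)$ by appealing to the maximal-vector count of Proposition \ref{prop-max}, and asserts the generation statement without further argument. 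You instead reduce both the identification of the quotient and the cyclicity of $\theta_1^q$ to the classical limit, via an $\mbA_1$-lattice argument together with the classical facts that $u_1\ot u_1$ generates $S^2(\C(n|n))$ over $\mfp_n$ and that $\dim_\C V(2\epsilon_1)=2n^2-1$. This buys an actual proof of the generation claim that the paper leaves implicit (and you correctly isolate it as the crux, since $y_{\ytableaushort{12}}V^{\otimes 2}$ has two independent maximal vectors, so no counting argument forces indecomposability); the price is importing the classical structure of $S^2(\C(n|n))$ from \cite{Mo} (to be used in the corrected form indicated in Remark \ref{rem-moon}) and some lattice/rank bookkeeping that Section \ref{sec:classical-limit} carries out verbatim only for highest weight modules, so your transfer of those arguments to the lattices $y_{\ytableaushort{12}}\bigl(\bigoplus_{i,j}\mbA_1(u_i\ot u_j)\bigr)$ and $\mfU_{\mbA_1}\theta_1^q$ should be written out, though it is routine.

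Two small repairs. First, Proposition \ref{prop:charclasslimit} does not directly give $\dim_{\C(q)}V^q(2\epsilon_1)=\dim_\C V(2\epsilon_1)$: it says the classical limit of $V^q(2\epsilon_1)$ is a highest weight $\mfU\mfp_n$-module with the same character, which yields only $\dim_{\C(q)}V^q(2\epsilon_1)\geq\dim_\C V(2\epsilon_1)$. That inequality is all your squeeze needs, because the surjection $Q\twoheadrightarrow V^q(2\epsilon_1)$ supplies the reverse bound $\dim V^q(2\epsilon_1)\leq \dim Q=2n^2-1$. Second, in the non-splitting step, the reason a complement of $\C(q)\theta_3^q$ must contain $\theta_1^q$ is that the $2\epsilon_1$-weight space of $y_{\ytableaushort{12}}V^{\otimes 2}$ is one-dimensional, spanned by $u_1\ot u_1$, and any direct complement is a weight submodule; this deserves a sentence.
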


\begin{proof}
We have that $$y_{\ytableaushort{12}}\msc_1 = \msc_1.$$ This implies that $$\msc_1 V^{\otimes 2} \subset y_{\ytableaushort{12}}V^{\otimes 2}.$$  Note that $\theta_1^q \not\in \msc_1 V^{\otimes 2}$. However, \eqref{contractionimage} gives that $\theta_3^q\in \msc_1 (V^{\otimes 2})$. By Proposition \ref{prop-max},  $\theta_3^q$ is the only, up to a scalar multiple, maximal vector in $\msc_1 V^{\otimes 2} $. Thus, we have that $$V^q(0) \cong \mfU_q\mfp_n \theta_3^q \subset \msc_1 V^{\otimes 2} \subsetneq y_{\ytableaushort{12}}V^{\otimes 2}.$$ By Proposition \ref{prop-max}, we have $$y_{\ytableaushort{12}}V^{\otimes 2}/\mfU_q\mfp_n \theta_3^q \cong V^q(2\epsilon_1), $$  which implies the exact sequence \eqref{eq-seq2}. Since $\theta_1^q$ generates $y_{\ytableaushort{12}}V^{\otimes 2}$, the sequence does not split.  \end{proof}

The following theorem is the main result of this subsection.

\begin{theorem} \label{uqpn-2tensorproductdecomposition}  As a $\mfU_q\mfp_n$-module, we have the following decomposition
\begin{align*}
\ytableausetup
{smalltableaux}
V^{\otimes 2} = y_{\ytableaushort{12}}V^{\otimes 2} \oplus  y_{\ytableaushort{1,2}}V^{\otimes 2},
\end{align*}
where the submodules in the above decomposition are involved in the non-split short exact sequences \eqref{eq-seq1}, \eqref{eq-seq2}. 
\end{theorem}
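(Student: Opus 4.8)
The plan is to exhibit $y_{\ytableaushort{12}}$ and $y_{\ytableaushort{1,2}}$ as a pair of complementary orthogonal idempotents acting on $V^{\otimes 2}$ by $\mfU_q\mfp_n$-module endomorphisms. Once this is done, the decomposition of $V^{\otimes 2}$ into a direct sum of $\mfU_q\mfp_n$-submodules follows formally, and the internal structure of the two summands is exactly the content of Propositions \ref{y1-2-V2-red-indecomposable} and \ref{y12-V2-red-indecomposable}.

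First I would work inside the Hecke algebra $H_2 = \langle \mst_1\rangle$, where the defining relation $(\mst_1 - q)(\mst_1 + q^{-1}) = 0$ gives $\mst_1^2 = 1 + (q-q^{-1})\mst_1$. A direct computation with $y_{\ytableaushort{12}} = \frac{1}{1+q^2}(1 + q\mst_1)$ and $y_{\ytableaushort{1,2}} = \frac{1}{1+q^{-2}}(1 - q^{-1}\mst_1)$ then shows
\begin{align*}
y_{\ytableaushort{12}} + y_{\ytableaushort{1,2}} = \id, \qquad y_{\ytableaushort{12}}^2 = y_{\ytableaushort{12}}, \qquad y_{\ytableaushort{1,2}}^2 = y_{\ytableaushort{1,2}}, \qquad y_{\ytableaushort{12}}\,y_{\ytableaushort{1,2}} = y_{\ytableaushort{1,2}}\,y_{\ytableaushort{12}} = 0,
\end{align*}
the idempotency being the $k=2$ instance of the identity $x_T(q)^2 = \xi\,x_T(q)$ recorded above, and the orthogonality following formally from $y_{\ytableaushort{12}} + y_{\ytableaushort{1,2}} = \id$ together with idempotency. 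Since the periplectic $q$-Brauer algebra $\mfB_{q,k}$ acts on $\C_q(n|n)^{\otimes k}$ with $\mst_i$ acting as $\mft_i$, the Hecke relation holds for $\mft_1$ on $V^{\otimes 2}$; and because $\mfc$ and $\mft$ (hence each $\mfc_i$, $\mft_i$) are $\mfU_q\mfp_n$-module homomorphisms, this $H_k$-action commutes with the $\mfU_q\mfp_n$-action. Therefore $y_{\ytableaushort{12}}$ and $y_{\ytableaushort{1,2}}$ act on $V^{\otimes 2}$ as idempotent $\mfU_q\mfp_n$-module endomorphisms summing to $\id$.

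From here the argument is purely formal: for an idempotent module endomorphism $e$ of a module $M$ one has $M = eM \oplus (\id - e)M$ as modules. Applying this with $M = V^{\otimes 2}$ and $e = y_{\ytableaushort{12}}$, and using $\id - y_{\ytableaushort{12}} = y_{\ytableaushort{1,2}}$, gives
\begin{align*}
V^{\otimes 2} = y_{\ytableaushort{12}}V^{\otimes 2} \oplus y_{\ytableaushort{1,2}}V^{\otimes 2}
\end{align*}
as $\mfU_q\mfp_n$-modules. Finally, Propositions \ref{y1-2-V2-red-indecomposable} and \ref{y12-V2-red-indecomposable} identify these two summands as the reducible, indecomposable modules fitting into the non-split short exact sequences \eqref{eq-seq1} and \eqref{eq-seq2}, which completes the proof.

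There is no serious obstacle here; the only points requiring care are (i) confirming that the Hecke relation for $\mft_1$ genuinely holds on $V^{\otimes 2}$, which is part of the construction of the $\mfB_{q,k}$-action in \cite{AGG}, and (ii) recording that the $H_k$-action commutes with the $\mfU_q\mfp_n$-action, so that the idempotent splitting is a splitting of modules and not merely of $\C(q)$-vector spaces. Both facts are already available from the setup of \cite{AGG} recalled earlier in this section.
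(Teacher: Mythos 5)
Your argument is correct, and it is a genuine (if modest) variant of the paper's. The paper also splits $V^{\otimes 2}$ using the projection $T(v)=y_{\ytableaushort{12}}v$: it computes $y_{\ytableaushort{12}}y_{\ytableaushort{1,2}}=0$, but to identify $\ker T$ with $y_{\ytableaushort{1,2}}V^{\otimes 2}$ it appeals to the classification of maximal vectors (Proposition \ref{prop-max}: the only maximal vector of $V^{\otimes 2}$ lying in $\ker T$ is $\theta_2^q$), and then it checks $T\circ\iota=\id$ to split the resulting short exact sequence. You instead observe the identity $y_{\ytableaushort{12}}+y_{\ytableaushort{1,2}}=\id$ in $H_2$ (which indeed follows from $\frac{1}{1+q^{-2}}=\frac{q^2}{1+q^2}$ and the Hecke relation $\mst_1^2=1+(q-q^{-1})\mst_1$), so that the two symmetrizers are complementary orthogonal idempotents commuting with the $\mfU_q\mfp_n$-action, and the decomposition $V^{\otimes 2}=y_{\ytableaushort{12}}V^{\otimes 2}\oplus y_{\ytableaushort{1,2}}V^{\otimes 2}$ is then purely formal; the identification of the summands via Propositions \ref{y1-2-V2-red-indecomposable} and \ref{y12-V2-red-indecomposable} is the same in both treatments. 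What your route buys is that the splitting itself needs no representation-theoretic input (no maximal-vector count, no classical-limit comparison) beyond the facts from \cite{AGG} that $\mft_1$ satisfies the Hecke relation and commutes with $\mfU_q\mfp_n$. What the paper's route buys is uniformity: the identity ``sum of symmetrizers $=\id$'' with pairwise orthogonality is special to $k=2$, whereas the paper's kernel-plus-splitting pattern ($y_Ty_{T'}=0$, identify $\ker T$, exhibit a section) is exactly what is iterated in the proof of Theorem \ref{uqpn-3tensorproductdecomposition} for $V^{\otimes 3}$, where the four Young symmetrizers are not a complete orthogonal system in this simple sense. So your proof is a clean shortcut for this theorem, but be aware it does not extend verbatim to the $k=3$ case.
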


\begin{proof}
Let $T: V^{\otimes 2} \longrightarrow  V^{\otimes 2}$ be the $\mfU_q\mfp_n$-module homomorphism defined by $$T(v) = y_{\ytableaushort{12}}v.$$  Note that $$y_{\ytableaushort{12}}y_{\ytableaushort{1,2}} = \frac{1}{(1+q^2)(1+q^{-2})}(1+qt_1)(1-q^{-1}t_1) = 0.$$ So, we have that $ y_{\ytableaushort{1,2}}V^{\otimes 2} \subset \ker T$, which implies that  $ y_{\ytableaushort{1,2}}V^{\otimes 2} = \ker T$ since the only maximal vector of $V^{\otimes 2}$ in $\ker T$ is $\theta_2^q$. Thus we have a short exact sequence
\begin{align*}
0 \longrightarrow y_{\ytableaushort{1,2}}V^{\otimes 2} \longrightarrow V^{\otimes 2}\longrightarrow  y_{\ytableaushort{12}}V^{\otimes 2} \longrightarrow 0.
\end{align*}
Using the embedding $\iota: y_{\ytableaushort{12}} V^{\otimes 2} \longrightarrow  V^{\otimes 2}$ and that $T  \circ  \iota= \id$, we see that the sequence above splits. The remaining part of the theorem follows from Propositions \ref{y1-2-V2-red-indecomposable} and \ref{y12-V2-red-indecomposable}.
\end{proof}

\subsection{Decomposition of $V^{\otimes 3}$}

In this subsection we prove an analogous statement to Theorem \ref{uqpn-2tensorproductdecomposition} for $V^{\otimes 3}$. More precisely, we show that

\begin{align} \label{uqpn-3tensorproductmodule}
\ytableausetup
{smalltableaux}
V^{\otimes 3} = y_{\ytableaushort{123}}V^{\otimes 3} \oplus  y_{\ytableaushort{12,3}}V^{\otimes 3} \oplus  y_{\ytableaushort{13,2}}V^{\otimes 3} \oplus  y_{\ytableaushort{1,2,3}}V^{\otimes 3}
\end{align}
is the decomposition of $V^{\otimes 3}$ into indecomposables, where 
\begin{align*}
y_{\ytableaushort{123}} &= \frac{1}{1+2q^2+2q^4+q^6} (1 + q\mst_1+q\mst_2+q^2\mst_1\mst_2+q^2\mst_2\mst_1+q^3\mst_1\mst_2\mst_1),   \\
y_{\ytableaushort{12,3}} &= \frac{1}{q^{-2}+1+q^2}[1+q\mst_1+(q-q^{-1})\mst_2-\mst_1\mst_2+(q^2-1)\mst_2\mst_1-q\mst_1\mst_2\mst_1], \\
y_{\ytableaushort{13,2}} &=  \frac{1}{q^{-2}+1+q^2} (1-q^{-1}\mst_1-q^2\mst_2\mst_1+q\mst_1\mst_2\mst_1),\\
y_{\ytableaushort{1,2,3}} &= \frac{1}{1+2q^{-2}+2q^{-4}+q^{-6}} (1 - q^{-1}\mst_1-q^{-1}\mst_2+q^{-2}\mst_1\mst_2+q^{-2}\mst_2\mst_1-q^{-3}\mst_1\mst_2\mst_1). 
\end{align*}

With a slight of notation, Theorem \ref{thm-maximalvectors} implies that the following are linearly independent maximal vectors of $V^{\otimes 3}$:
\begin{align*}
\theta_1^q &= \msc_1(u_1\otimes u_{-1} \otimes u_1), \\
\theta_2^q &= \mst_2\msc_1\msc_2(u_1\otimes u_1 \otimes u_{-1}),\\
\theta_3^q &= \msc_2(u_1\otimes u_1 \otimes u_{-1}),\\
\theta_4^q &= y_{\ytableaushort{123}}(u_1\otimes u_1 \otimes u_1), \\
\theta_5^q &= y_{\ytableaushort{12,3}}(u_1\otimes u_1 \otimes u_2), \\
\theta_6^q &= y_{\ytableaushort{13,2}} (u_1\otimes u_2 \otimes u_1), \\
\theta_7^q &= y_{\ytableaushort{1,2,3}} (u_1\otimes u_2 \otimes u_3). \\
\end{align*}

The next proposition is proven in a similar way as Proposition \ref{prop-max}.
\begin{proposition}  \label{prop-max-3}The  vectors $\theta_i^q$, $i=1,...,7$, form a complete set of linearly independent maximal vectors of the $\mfU_q\mfp_n$-module $V^{\otimes 3}$. Furthermore,
\begin{align*}
\theta_4^q, \hspace{10pt} \theta_1^q + q\theta_2^q + q^2\theta_3^q &\in  y_{\ytableaushort{123}}V^{\otimes 3},\\
\theta_5^q, \hspace{10pt} -\theta_1^q - (q-q^{-1})\theta_2^q + \theta_3^q  &\in  y_{\ytableaushort{12,3}}V^{\otimes 3},\\
\theta_6^q, \hspace{10pt} - q\theta_2^q + \theta_3^q &\in y_{\ytableaushort{13,2}}V^{\otimes 3},\\
\theta_7^q &\in y_{\ytableaushort{1,2,3}}V^{\otimes 3}.
\end{align*}
\end{proposition}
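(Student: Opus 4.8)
The plan is to follow the same blueprint that worked for Proposition \ref{prop-max}. First I would invoke Theorem \ref{thm-maximalvectors} together with Proposition \ref{prop:charclasslimit}: the $q$-vectors $\theta_i^q$ ($i=1,\dots,7$) are linearly independent maximal vectors of $V^{\otimes 3}$ by Theorem \ref{thm-maximalvectors}, and it remains to check that there are no others. For this, I would apply the classical limit. By Proposition \ref{prop:charclasslimit} the character of $V^{\otimes 3}$ as a $\mfU_q\mfp_n$-module agrees with its character as a $\mfU\mfp_n$-module at $q=1$; in particular the dimension of the space of maximal vectors (in any fixed weight, hence in total) is the same. Since the classical limits $\theta_i := \overline{\theta_i^q}$ are, up to the sign correction for the contraction-type vectors noted in Remark \ref{rem-moon}, exactly the seven maximal vectors of the $\mfU\mfp_n$-module $V^{\otimes 3}$ produced in Theorem 3.8 of \cite{Mo}, and that list is complete for $V^{\otimes 3}$, the set $\{\theta_i^q\}$ must already be complete. (One should observe that a maximal vector of $V^{\otimes 3}$ lies in the $\mbA_1$-form up to rescaling by a power of $q-1$, so that taking the classical limit makes sense and sends maximal vectors to maximal vectors; this is the same argument as in the proof of Proposition \ref{prop-max}.)

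For the second assertion — the membership relations — I would verify each line by a direct computation with the Hecke-algebra elements $y_{\ytableaushort{123}}$, $y_{\ytableaushort{12,3}}$, $y_{\ytableaushort{13,2}}$, $y_{\ytableaushort{1,2,3}}$ acting on the relevant simple tensors. For the "$\theta_4^q, \theta_5^q, \theta_6^q, \theta_7^q$" half of each line there is nothing to prove beyond unwinding definitions: by construction $\theta_4^q = y_{\ytableaushort{123}}(u_1\ot u_1\ot u_1)$, $\theta_5^q = y_{\ytableaushort{12,3}}(u_1\ot u_1\ot u_2)$, $\theta_6^q = y_{\ytableaushort{13,2}}(u_1\ot u_2\ot u_1)$, $\theta_7^q = y_{\ytableaushort{1,2,3}}(u_1\ot u_2\ot u_3)$, so each lies tautologically in the corresponding image. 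The content is in the contraction-type combinations. Here I would use that each $y_T(q)$ is (a scalar multiple of) an idempotent in $H_3$, and that the $\mst_i$ act on $V^{\otimes 3}$ commuting with $\mfU_q\mfp_n$, to reduce $y_T(q)$ applied to the spanning tensors $\msc_1(u_1\ot u_{-1}\ot u_1)$, $\msc_2(u_1\ot u_1\ot u_{-1})$ and $\mst_2\msc_1\msc_2(u_1\ot u_1\ot u_{-1})$ to explicit linear combinations of $\theta_1^q,\theta_2^q,\theta_3^q$. Concretely one computes how $\mst_1$ and $\mst_2$ permute/contract these three vectors (using $\mst_i^2 = (q-q^{-1})\mst_i + 1$ on the relevant weight space and the braid relations for the periplectic $q$-Brauer algebra in Definition 5.1 of \cite{AGG}), substitutes into the explicit formulas for the $y_T(q)$ displayed above, and reads off that $y_{\ytableaushort{123}}$ sends $\theta_3^q$ to a nonzero multiple of $\theta_1^q+q\theta_2^q+q^2\theta_3^q$, and similarly for the other three shapes; the orthogonality $y_{T}y_{T'} = 0$ for distinct shapes of the same size (the $q$-analogue of the identity $y_{\ytableaushort{12}}y_{\ytableaushort{1,2}}=0$ used in the proof of Theorem \ref{uqpn-2tensorproductdecomposition}) then confirms the remaining combinations lie in the claimed summands and nowhere else.

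I expect the main obstacle to be purely computational: carefully bookkeeping the action of the length-$\leq 3$ Hecke generators on the contraction vectors $\theta_1^q,\theta_2^q,\theta_3^q$, since the $q$-deformation breaks the symmetry of the classical ($q=1$) picture and introduces the asymmetric coefficients $q-q^{-1}$, $q^2-1$ visible in the formula for $y_{\ytableaushort{12,3}}$. Everything else — completeness of the maximal-vector list, the passage to the classical limit, the idempotency and orthogonality of the $y_T(q)$ — is structurally identical to the $V^{\otimes 2}$ case already treated. Since the statement explicitly says this proposition "is proven in a similar way as Proposition \ref{prop-max}", I would present the completeness part briefly and relegate the membership verifications to a direct (if tedious) check, exactly as the authors do for $\theta_1^q,\theta_2^q,\theta_3^q$ in the rank-two case.
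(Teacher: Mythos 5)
Your proposal matches the paper's treatment: the paper gives no separate argument for this proposition, stating only that it is proven like Proposition \ref{prop-max}, whose proof is exactly your route — apply the classical limit to the $\theta_i^q$, identify them (after the sign correction of Remark \ref{rem-moon}) with Moon's complete list of maximal vectors, conclude completeness via Proposition \ref{prop:charclasslimit}, and check the membership relations by direct computation. So your approach is essentially identical to the paper's, with the membership verifications spelled out in somewhat more detail than the authors provide.
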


We will now look into each of the submodules in the decomposition of (\ref{uqpn-3tensorproductmodule}).

\begin{proposition} \label{prop-y1/2/3} The module $y_{\ytableaushort{1,2,3}}V^{\otimes 3}$ is reducible and indecomposable. More precisely, we have the following non-split short exact sequence
\begin{equation} \label{eq-ne3-1}
0 \longrightarrow V^q(\epsilon_1 + \epsilon_2 + \epsilon_3)\longrightarrow  y_{\ytableaushort{1,2,3}}V^{\otimes 3} \longrightarrow V^q(\epsilon_1) \longrightarrow 0.
\end{equation}
\end{proposition}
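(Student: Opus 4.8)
The plan is to follow the pattern of Proposition~\ref{y1-2-V2-red-indecomposable}, using the contraction $\msc_1$ as the tool that produces the quotient. Two preliminary facts will be needed. First, the natural module $V=\C_q(n|n)$ is isomorphic to $V^q(\epsilon_1)$: it is generated by the maximal vector $u_1$ of weight $\epsilon_1$, hence is a highest weight module, and its classical limit is the natural $\mfp_n$-module $\C(n|n)$, which is irreducible; so $V$ and $V^q(\epsilon_1)$ have equal characters by Proposition~\ref{prop:charclasslimit}, forcing $V\cong V^q(\epsilon_1)$. Second, writing $\msc_1=\msc\otimes\id$ on $V^{\otimes 3}=V^{\otimes 2}\otimes V$, the image of $\msc_1$ is the submodule $\msc(V^{\otimes 2})\otimes V$, and since $\msc(V^{\otimes 2})$ is the trivial module $\C(q)$ (as recorded in the proof of Theorem~\ref{thm-maximalvectors}) this image is $\cong\C(q)\otimes V\cong V\cong V^q(\epsilon_1)$, in particular irreducible.

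Next I would pin down the socle of $Y:=y_{\ytableaushort{1,2,3}}V^{\otimes 3}$, which is a $\mfU_q\mfp_n$-submodule because $y_{\ytableaushort{1,2,3}}$ commutes with the action. By Proposition~\ref{prop-max-3}, the only maximal vector of $Y$ up to scalar is $\theta_7^q$, of weight $\epsilon_1+\epsilon_2+\epsilon_3$. Since $V^{\otimes 3}$ is finite dimensional, every nonzero submodule of $Y$ contains a maximal vector and hence contains $\theta_7^q$; therefore $Y$ has a unique simple submodule, namely $\mfU_q\mfp_n\theta_7^q$, and the standard highest weight argument shows it is irreducible of highest weight $\epsilon_1+\epsilon_2+\epsilon_3$, i.e. $\mfU_q\mfp_n\theta_7^q\cong V^q(\epsilon_1+\epsilon_2+\epsilon_3)$. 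In particular $Y$ has simple socle, so it is indecomposable.

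To produce the quotient, restrict $\msc_1$ to $Y$, obtaining a $\mfU_q\mfp_n$-homomorphism $Y\to\msc_1V^{\otimes 3}\cong V^q(\epsilon_1)$. I would check it is nonzero either by a direct computation (for instance by evaluating $\msc_1 y_{\ytableaushort{1,2,3}}$ on $u_1\otimes u_{-1}\otimes u_1$) or, more economically, by passing to the classical limit and invoking the corrected version of \cite{Mo} (Remark~\ref{rem-moon}); irreducibility of $\msc_1V^{\otimes 3}$ then gives surjectivity, and we obtain $0\to\mcN\to Y\to V^q(\epsilon_1)\to 0$ with $\mcN=\ker(\msc_1|_Y)$. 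Since $\epsilon_1+\epsilon_2+\epsilon_3$ is not a weight of $V^q(\epsilon_1)=V$, we have $\msc_1\theta_7^q=0$, so $\theta_7^q\in\mcN$ and hence $V^q(\epsilon_1+\epsilon_2+\epsilon_3)=\mfU_q\mfp_n\theta_7^q\subseteq\mcN$. To see this inclusion is an equality I would compare characters: the idempotent $y_{\ytableaushort{1,2,3}}$ has coefficients in $\mbA_1$ and specializes at $q=1$ to the classical Young symmetrizer of the same shape, and the rank of an idempotent on each (finite dimensional) weight space is unchanged under this specialization, so $\ch Y=\ch\big(y_{\ytableaushort{1,2,3}}|_{q=1}(\C(n|n))^{\otimes 3}\big)$, which by the corrected \cite{Mo} equals $\ch V(\epsilon_1)+\ch V(\epsilon_1+\epsilon_2+\epsilon_3)=\ch V^q(\epsilon_1)+\ch V^q(\epsilon_1+\epsilon_2+\epsilon_3)$. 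Together with $\dim Y=\dim\mcN+\dim V^q(\epsilon_1)$ and $V^q(\epsilon_1+\epsilon_2+\epsilon_3)\subseteq\mcN$ this forces $\mcN=V^q(\epsilon_1+\epsilon_2+\epsilon_3)$, yielding \eqref{eq-ne3-1}. Finally the sequence does not split: a splitting would make $Y\cong V^q(\epsilon_1)\oplus V^q(\epsilon_1+\epsilon_2+\epsilon_3)$, which has two linearly independent maximal vectors (of weights $\epsilon_1$ and $\epsilon_1+\epsilon_2+\epsilon_3$), contradicting Proposition~\ref{prop-max-3}.

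The main obstacle I anticipate lies in the third step: verifying that $\msc_1$ really does not annihilate $Y$ (a finite but somewhat delicate computation with the $q$-antisymmetrizer on three tensor factors) and identifying $\mcN$ precisely, which requires knowing the composition length of $Y$. Both are most cleanly handled by reducing to the already-known (corrected) classical decomposition of $(\C(n|n))^{\otimes 3}$ through the classical-limit machinery of Section~\ref{sec:classical-limit}; the genuinely $q$-specific content — indecomposability and the failure of the sequence to split — then follows formally from the classification of maximal vectors in Proposition~\ref{prop-max-3}.
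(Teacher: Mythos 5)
Your proposal is correct and follows essentially the same route as the paper's proof: restrict the contraction $\msc_1$ to $y_{\ytableaushort{1,2,3}}V^{\otimes 3}$, verify it is nonzero by the same evaluation at $u_1\otimes u_{-1}\otimes u_1$ (or via the classical limit), identify the quotient with $V^q(\epsilon_1)\cong\C_q(n|n)$, and use the uniqueness (up to scalar) of the maximal vector $\theta_7^q$ from Proposition \ref{prop-max-3} to get the kernel, indecomposability, and non-splitting. The only divergence is that you pin down $\ker(\msc_1|_{y_{\ytableaushort{1,2,3}}V^{\otimes 3}})$ by a character/specialization count, whereas the paper concludes $\mcN=\mfU_q\mfp_n\theta_7^q$ directly from the uniqueness of its maximal vector; your extra step is a (somewhat more careful) filling-in of that point rather than a different method.
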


\begin{proof}

Consider the contraction map $\msc_1$. Let $\mcN$ be the kernel of the restriction of $\msc_1$ to $y_{\ytableaushort{1,2,3}} V^{\otimes 3} $. We have that $$\msc_1 y_{\ytableaushort{1,2,3}} V^{\otimes 3} \subset \msc_1 V^{\otimes 3} \cong V = \C_q(n|n)$$ and $$\msc_1y_{\ytableaushort{1,2,3}}(u_1\otimes u_{-1}\otimes u_1) =  \frac{1}{1+2q^{-2}+2q^{-4}+q^{-6}}\msc_1[(1+q^2-q^{-4})u_1\otimes u_{-1}\otimes u_1 - q^{-2}u_1\otimes u_1\otimes u_{-1} + u_{-1}\otimes u_1\otimes u_1] \neq 0.$$ By Proposition \ref{prop-max-3}, $\theta_7^q$ is the only maximal vector in $\mcN$. Hence,  $$V^q(\epsilon_1 + \epsilon_2 + \epsilon_3)\cong \mfU_q\mfp_n\theta_7^q = \mcN \subsetneq  y_{\ytableaushort{1,2,3}} V^{\otimes 3}.$$
Moreover, $$y_{\ytableaushort{1,2,3}} V^{\otimes 3} / \mfU_q\mfp_n\theta_7^q \cong V^q(\epsilon_1)\cong \C_q(n|n),$$
which implies the exact sequence. The sequence does not split because $ y_{\ytableaushort{1,2,3}}V^{\otimes 3} $ has a unique up to a scalar multiple maximal vector.
\end{proof}

\begin{proposition} \label{prop-y123} The module $y_{\ytableaushort{123}}V^{\otimes 3}$ is reducible and indecomposable. More precisely, we have the following non-split short exact sequence
\begin{equation} \label{eq-ne3-2}
0 \longrightarrow V^q(\epsilon_1) \longrightarrow  y_{\ytableaushort{123}}V^{\otimes 3} \longrightarrow V^q(3\epsilon_1) \longrightarrow 0.
\end{equation}
\end{proposition}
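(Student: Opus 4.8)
The plan is to mimic the proof of Proposition \ref{prop-y1/2/3}, but now using the contraction map together with the structure already established for $V^{\otimes 2}$. First I would observe that $y_{\ytableaushort{123}}\msc_i = \msc_i$ for the relevant placements (this is the analogue of the identity $y_{\ytableaushort{12}}\msc_1 = \msc_1$ used in Proposition \ref{y12-V2-red-indecomposable}), so that $\msc_1 V^{\otimes 3}$ and $\msc_2 V^{\otimes 3}$ both sit inside $y_{\ytableaushort{123}}V^{\otimes 3}$. Since $\msc$ contracts a pair of adjacent factors to the trivial module, $\msc_1 V^{\otimes 3} \cong V$ and $\msc_2 V^{\otimes 3} \cong V$ as $\mfU_q\mfp_n$-modules, and $V \cong V^q(\epsilon_1)$. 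I would then identify the submodule $W := \msc_1 V^{\otimes 3} + \msc_2 V^{\otimes 3}$ (or the appropriate single copy; one must check whether these two images coincide inside $y_{\ytableaushort{123}}V^{\otimes 3}$ — by Proposition \ref{prop-max-3} the combination $\theta_1^q + q\theta_2^q + q^2\theta_3^q$ is the unique maximal vector of $y_{\ytableaushort{123}}V^{\otimes 3}$ lying in the span of the $\theta_i^q$, $i=1,2,3$, so $W$ has a one-dimensional space of maximal vectors and hence $W \cong V^q(\epsilon_1)$ is simple).

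Next I would show that $y_{\ytableaushort{123}}V^{\otimes 3}/W \cong V^q(3\epsilon_1)$. The quotient is a highest weight module generated by the image of $\theta_4^q = y_{\ytableaushort{123}}(u_1\ot u_1\ot u_1)$, which has weight $3\epsilon_1$. To see it is irreducible, use Proposition \ref{prop-max-3}: the only maximal vectors of $y_{\ytableaushort{123}}V^{\otimes 3}$ are (up to scalars) $\theta_4^q$ and $\theta_1^q + q\theta_2^q + q^2\theta_3^q$, and the latter already lies in $W$; hence the quotient has a unique (up to scalar) maximal vector, namely the image of $\theta_4^q$, so it is the irreducible module $V^q(3\epsilon_1)$. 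This gives the short exact sequence \eqref{eq-ne3-2}. Finally, non-splitness: if the sequence split, $y_{\ytableaushort{123}}V^{\otimes 3}$ would contain a copy of $V^q(3\epsilon_1)$ as a direct summand, producing a maximal vector of weight $3\epsilon_1$ distinct from $\theta_4^q$ and independent of the weight-$\epsilon_1$ vector; but $\theta_4^q$ is the unique maximal vector of weight $3\epsilon_1$ and it generates all of $y_{\ytableaushort{123}}V^{\otimes 3}$ (indeed $\theta_4^q = y_{\ytableaushort{123}}(u_1^{\ot 3})$ and $V^{\otimes 3}$ is generated by $u_1^{\ot 3}$ together with the lower-weight vectors), so there is no complementary submodule.

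The step I expect to be the main obstacle is pinning down exactly which copy of $V^q(\epsilon_1)$ appears as the submodule and verifying that $\msc_1 V^{\otimes 3}$ and $\msc_2 V^{\otimes 3}$ generate the same simple submodule of $y_{\ytableaushort{123}}V^{\otimes 3}$ (as opposed to two independent copies, which would make the socle too large and contradict Proposition \ref{prop-max-3}). Concretely one checks that $\msc_1(u_1 \ot u_{-1}\ot u_1)$ and $\msc_2(u_1\ot u_1 \ot u_{-1})$, after applying $y_{\ytableaushort{123}}$, lie in a common cyclic submodule; equivalently, one traces through Proposition \ref{prop-max-3} to see that any maximal vector in $\msc_1 V^{\otimes 3} + \msc_2 V^{\otimes 3}$ is proportional to $\theta_1^q + q\theta_2^q + q^2\theta_3^q$. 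This is the $q$-deformation of the corresponding classical computation, and — as elsewhere in this section — it can also be deduced by passing to the classical limit $q=1$ and invoking Proposition \ref{prop:charclasslimit} together with the known structure of $V^{\otimes 3}$ over $\mfU\mfp_n$ from \cite{Mo}. The remaining verifications ($y_{\ytableaushort{123}}\msc_i = \msc_i$, the weights of the $\theta_i^q$, and the character count forcing the quotient to be irreducible) are routine.
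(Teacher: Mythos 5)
Your argument breaks at the very first step: the identity $y_{\ytableaushort{123}}\msc_i = \msc_i$ is false, and with it the containment $\msc_1 V^{\otimes 3},\, \msc_2 V^{\otimes 3} \subset y_{\ytableaushort{123}}V^{\otimes 3}$. In the two-factor case the identity $y_{\ytableaushort{12}}\msc_1=\msc_1$ holds only because $y_{\ytableaushort{12}}=\tfrac{1}{1+q^2}(1+q\mst_1)$ and $\mst_1\msc_1=q\msc_1$; but $y_{\ytableaushort{123}}$ also contains the terms $q\mst_2$, $q^2\mst_1\mst_2$, $q^2\mst_2\mst_1$, $q^3\mst_1\mst_2\mst_1$, and $\mst_2\msc_1$ is not a scalar multiple of $\msc_1$ (it moves the contraction to another pair of factors), so $y_{\ytableaushort{123}}\msc_1\neq\msc_1$. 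Concretely, if $\msc_1V^{\otimes 3}$ were contained in $y_{\ytableaushort{123}}V^{\otimes 3}$, then $\theta_1^q=\msc_1(u_1\otimes u_{-1}\otimes u_1)$ would be a maximal vector of $y_{\ytableaushort{123}}V^{\otimes 3}$, contradicting Proposition \ref{prop-max-3}, which places only the single combination $\theta_1^q+q\theta_2^q+q^2\theta_3^q$ in that summand; the individual vectors $\theta_1^q,\theta_2^q,\theta_3^q$ have nonzero components in $y_{\ytableaushort{12,3}}V^{\otimes 3}$ and $y_{\ytableaushort{13,2}}V^{\otimes 3}$ as well. Thus your candidate $W=\msc_1V^{\otimes 3}+\msc_2V^{\otimes 3}$ is a submodule of $V^{\otimes 3}$ but not of $y_{\ytableaushort{123}}V^{\otimes 3}$, and it is not simple (it contains the two independent maximal vectors $\theta_1^q$ and $\theta_3^q$), so the conclusion $W\cong V^q(\epsilon_1)$ cannot be extracted from Proposition \ref{prop-max-3} in the way you indicate.

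The missing idea, which is exactly what the paper's proof supplies, is to replace the single contractions by the symmetrized contraction $\msK=\msc_1\msc_2+q\mst_2\msc_1\msc_2+q^2\msc_2$, which does satisfy $y_{\ytableaushort{123}}\msK=\msK$; then $\msK V^{\otimes 3}\subset y_{\ytableaushort{123}}V^{\otimes 3}$, and $\msK(u_1\otimes u_1\otimes u_{-1})=\theta_1^q+q\theta_2^q+q^2\theta_3^q$ is, by Proposition \ref{prop-max-3}, the unique maximal vector (up to scalar) in $\msK V^{\otimes 3}$, which produces the submodule isomorphic to $V^q(\epsilon_1)$. Once this is in place, your remaining steps agree with the paper's: the quotient is generated by the image of $\theta_4^q$ and is irreducible because no other maximal vectors remain, and the sequence is non-split because $\theta_4^q$ generates all of $y_{\ytableaushort{123}}V^{\otimes 3}$. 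Two cautions on those steps: your parenthetical justification that $V^{\otimes 3}$ is generated by $u_1^{\otimes 3}$ and lower-weight vectors is not an argument (already $u_1\otimes u_1$ generates only $y_{\ytableaushort{12}}V^{\otimes 2}$, not $V^{\otimes 2}$) — what is needed, and what the paper asserts, is that $\theta_4^q$ generates the summand $y_{\ytableaushort{123}}V^{\otimes 3}$; and the appeal to the classical limit gives the count and linear independence of maximal vectors (as in Proposition \ref{prop-max-3}) but does not substitute for the $q$-identity $y_{\ytableaushort{123}}\msK=\msK$ needed to locate the copy of $V^q(\epsilon_1)$ inside this particular summand.
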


\begin{proof}
Let $\msK = \msc_1\msc_2+q\mst_2\msc_1\msc_2 +q^2\msc_2$. Note that $$y_{\ytableaushort{123}}\, \msK= \msK.$$ It follows that $$ \msK V^{\otimes 3} \subset y_{\ytableaushort{123}} V^{\otimes 3}.$$ Note that  $\theta_4^q \neq \msK V^{\otimes 3}$. However,  $\msK(u_1\otimes u_1 \otimes u_{-1}) = \theta_1^q + q\theta_2^q + q^2\theta_3^q$. By Proposition \ref{prop-max-3}, $\theta_1^q + q\theta_2^q + q^2\theta_3^q$ is the only, up to a scalar multiple, maximal vector in $\msK V^{\otimes 3} $. Thus, we have that $$V^q(\epsilon_1) \cong \mfU_q\mfp_n(\theta_1^q + q\theta_2^q + q^2\theta_3^q) \subsetneq y_{\ytableaushort{123}} V^{\otimes 3}. $$ By Proposition \ref{prop-max-3}, we have $$y_{\ytableaushort{1,2,3}} V^{\otimes 3} / \mfU_q\mfp_n(\theta_1^q + q\theta_2^q + q^2\theta_3^q) \cong V^q(3 \epsilon_1).$$ which implies the exact sequence \eqref{eq-ne3-2}. Since $\theta_4^q$ generates $y_{\ytableaushort{12}}V^{\otimes 2}$, the sequence does not split. 
\end{proof}

\begin{proposition} \label{prop-y12/3} The module $y_{\ytableaushort{12,3}}V^{\otimes 3}$ is completely reducible as a $\mfU_q\mfp_n$-module into a direct sum of irreducible $\mfU_q\mfp_n$-modules. More precisely, we have the following split short exact sequence
\begin{equation} \label{eq-ne3-3}
0 \longrightarrow V^q(2\epsilon_1 + \epsilon_2)\longrightarrow  y_{\ytableaushort{12,3}}V^{\otimes 3} \longrightarrow V^q(\epsilon_1) \longrightarrow 0.
\end{equation}
\end{proposition}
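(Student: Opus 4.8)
The plan is to mimic the strategy of Propositions~\ref{prop-y1/2/3} and~\ref{prop-y123} — use a contraction map to build a short exact sequence — and then to upgrade the argument so as to exhibit a splitting. By Proposition~\ref{prop-max-3}, the $\mfU_q\mfp_n$-module $y_{\ytableaushort{12,3}}V^{\otimes 3}$ has exactly two linearly independent maximal vectors up to scalar: $\theta_5^q$, of weight $2\epsilon_1+\epsilon_2$, and $\vartheta := -\theta_1^q - (q-q^{-1})\theta_2^q + \theta_3^q$, of weight $\epsilon_1$ (this is nonzero and maximal since the $\theta_i^q$ are linearly independent maximal vectors). First I would observe that $\mfU_q\mfp_n\vartheta$ is already irreducible. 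Indeed, being a highest weight module generated by the maximal vector $\vartheta$, it equals $\mfU_q^-\vartheta$ by Theorem~\ref{thm:tridecomp}, so all its weights lie in $\epsilon_1 + Q_-$; since $2\epsilon_1+\epsilon_2\not\le\epsilon_1$, the vector $\theta_5^q$ is not in $\mfU_q\mfp_n\vartheta$, hence $\vartheta$ is the unique maximal vector of $\mfU_q\mfp_n\vartheta$. The standard fact that every nonzero submodule of a weight module with weights bounded above contains a maximal vector then forces $\mfU_q\mfp_n\vartheta$ to be simple, so $\mfU_q\mfp_n\vartheta\cong V^q(\epsilon_1)$.

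Next I would produce the sequence. Restricting the contraction $\msc_1$ to $y_{\ytableaushort{12,3}}V^{\otimes 3}$ gives a $\mfU_q\mfp_n$-homomorphism $\pi\colon y_{\ytableaushort{12,3}}V^{\otimes 3}\to \msc_1 V^{\otimes 3}\cong V^q(\epsilon_1)$. Since $\mst_1$ and $\mst_2$ preserve the span of those simple tensors of $V^{\otimes 3}$ all of whose factors carry a positive index, $\theta_5^q = y_{\ytableaushort{12,3}}(u_1\otimes u_1\otimes u_2)$ is such a combination; as $\mfc$ annihilates $u_a\otimes u_b$ whenever $a,b>0$, we get $\msc_1\theta_5^q=0$, and therefore $\pi$ vanishes on $\mfU_q\mfp_n\theta_5^q$. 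The crucial step — which I expect to be the main obstacle — is the explicit verification that $\msc_1\vartheta\neq 0$, equivalently that $\msc_1 y_{\ytableaushort{12,3}}(u_1\otimes u_{-1}\otimes u_1)\neq 0$, in the spirit of the computation in the proof of Proposition~\ref{prop-y1/2/3}; this is carried out by reducing via the periplectic $q$-Brauer relations of \cite{AGG} (in particular $\msc_i^2=0$ and the relations among $\msc_1,\msc_2,\mst_2$), noting that $\msc_1\vartheta$ is a weight vector of weight $\epsilon_1$, hence a scalar multiple of the highest weight vector of $\msc_1 V^{\otimes 3}\cong V^q(\epsilon_1)$, and checking that scalar is nonzero. (If $\msc_1$ should fail to separate $\vartheta$ from $\mfU_q\mfp_n\theta_5^q$, one uses $\msc_2$ or an $H_3$-combination of $\msc_1,\msc_2$, which is still zero on the all-positive-index submodule $\mfU_q\mfp_n\theta_5^q$.) This simultaneously shows $\pi$ is surjective and $\vartheta\notin\ker\pi\supseteq\mfU_q\mfp_n\theta_5^q$.

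Granting $\vartheta\notin\mfU_q\mfp_n\theta_5^q$, the only maximal vector of $\mfU_q\mfp_n\theta_5^q$ is $\theta_5^q$, so the same argument as above gives $\mfU_q\mfp_n\theta_5^q\cong V^q(2\epsilon_1+\epsilon_2)$. The submodule $\ker\pi$ contains $\mfU_q\mfp_n\theta_5^q$, has $\theta_5^q$ as its unique maximal vector, and satisfies $y_{\ytableaushort{12,3}}V^{\otimes 3}/\ker\pi\cong V^q(\epsilon_1)$. To conclude $\ker\pi=\mfU_q\mfp_n\theta_5^q$ I would compare characters via the classical limit: by Proposition~\ref{prop:charclasslimit} and Moon's (corrected, cf.\ Remark~\ref{rem-moon}) description of $y_{\ytableaushort{12,3}}(\C(n|n)^{\otimes 3})$ one has $\ch y_{\ytableaushort{12,3}}V^{\otimes 3} = \ch V^q(2\epsilon_1+\epsilon_2) + \ch V^q(\epsilon_1)$, which leaves no room for a third composition factor and forces $\ker\pi = \mfU_q\mfp_n\theta_5^q\cong V^q(2\epsilon_1+\epsilon_2)$. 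This gives the exact sequence~\eqref{eq-ne3-3}.

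Finally, for the splitting, irreducibility of $V^q(\epsilon_1)$ yields a $\mfU_q\mfp_n$-homomorphism $\iota\colon V^q(\epsilon_1)\to y_{\ytableaushort{12,3}}V^{\otimes 3}$ sending the highest weight vector to $\vartheta$, with image $\mfU_q\mfp_n\vartheta$; then $\pi\circ\iota$ sends the highest weight vector of $V^q(\epsilon_1)$ to $\pi(\vartheta)\neq 0$, hence $\pi\circ\iota$ is a nonzero scalar times the identity. Therefore $\pi$ admits a section and $y_{\ytableaushort{12,3}}V^{\otimes 3} = \ker\pi\oplus\im\iota \cong V^q(2\epsilon_1+\epsilon_2)\oplus V^q(\epsilon_1)$, which is completely reducible. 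Equivalently, $\mfU_q\mfp_n\theta_5^q$ and $\mfU_q\mfp_n\vartheta$ are nonisomorphic irreducible submodules, so their sum is direct, and the character count shows it exhausts $y_{\ytableaushort{12,3}}V^{\otimes 3}$; this makes~\eqref{eq-ne3-3} split.
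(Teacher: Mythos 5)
Your overall architecture (contraction map gives the sequence, a section through the irreducible quotient gives the splitting) is close in spirit to the paper's, but the pivotal step is missing. You restrict $\msc_1$ to $y_{\ytableaushort{12,3}}V^{\otimes 3}$ and then defer the crucial nonvanishing $\msc_1\vartheta\neq 0$ to an unexecuted computation, explicitly calling it ``the main obstacle'' and hedging that if it fails you would switch to $\msc_2$ or an $H_3$-combination. That nonvanishing is exactly what drives surjectivity of $\pi$, the statement $\vartheta\notin\ker\pi$, and the existence of the section, so leaving it unverified is a genuine gap rather than a routine detail. Moreover, your claimed reformulation ``equivalently that $\msc_1 y_{\ytableaushort{12,3}}(u_1\otimes u_{-1}\otimes u_1)\neq 0$'' is not justified: nothing shows that $y_{\ytableaushort{12,3}}(u_1\otimes u_{-1}\otimes u_1)$ is proportional to $\vartheta$ (the identity available in the paper is $y_{\ytableaushort{12,3}}\msc_2(u_1\otimes u_1\otimes u_{-1})=\vartheta$, with $\msc_2$, not $\msc_1$), and while any nonvanishing would give surjectivity onto the irreducible target, your splitting map specifically requires $\pi(\vartheta)\neq 0$. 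The paper's proof avoids all of this by working with $\msc_2$: the single algebra identity $\msc_2\,y_{\ytableaushort{12,3}}\,\msc_2=\frac{q^{-2}}{q^{2}+1+q^{-2}}\,\msc_2$ simultaneously yields surjectivity of $T(y_{\ytableaushort{12,3}}v)=\msc_2 y_{\ytableaushort{12,3}}v$, the fact that $T(\vartheta)$ is a nonzero multiple of $\msc_2(u_1\otimes u_1\otimes u_{-1})$, and a section $i$ with $i\circ T=\mathrm{id}$, so no separate nonvanishing check or abstract splitting argument is needed.

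A second gap is your identification of $\ker\pi$ with $\mfU_q\mfp_n\theta_5^q$ by a character count ``via Proposition \ref{prop:charclasslimit} and Moon's (corrected) description of $y_{\ytableaushort{12,3}}(\C(n|n)^{\otimes 3})$.'' Remark \ref{rem-moon} states that Moon's Sections 6.1--6.2 are the statements being corrected, and that the corrected classical statements are obtained by specializing the results of this very section at $q=1$; so citing them as input here is circular. In addition, Proposition \ref{prop:charclasslimit} concerns highest weight modules $V^q$ and their classical limits, not the submodule $y_{\ytableaushort{12,3}}V^{\otimes 3}$, and it does not by itself identify $\ch V^q(2\epsilon_1+\epsilon_2)$ with the character of the classical simple module, so the equality $\ch y_{\ytableaushort{12,3}}V^{\otimes 3}=\ch V^q(2\epsilon_1+\epsilon_2)+\ch V^q(\epsilon_1)$ is not available from the results you cite. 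In the paper this bookkeeping is replaced by the concrete structure coming from $\msc_2$: once $T$ splits and $\vartheta\notin\ker T$, Proposition \ref{prop-max-3} pins down $\theta_5^q$ as the unique maximal vector of $\ker T$, giving $\ker T=\mfU_q\mfp_n\theta_5^q\cong V^q(2\epsilon_1+\epsilon_2)$ without appealing to Moon's corrected decomposition. To repair your write-up you would either carry out the $\msc_1$-computation explicitly (and justify the link to $\vartheta$), or simply adopt the $\msc_2$-based identities above.
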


\begin{proof}
Consider the $\mfU_q\mfp_n$-module homomorphism $T: y_{\ytableaushort{12,3}}V^{\otimes 3}  \rightarrow \msc_2V^{\otimes 3}$ such that $T(y_{\ytableaushort{12,3}}v) = \msc_2y_{\ytableaushort{12,3}}v$. Since $$\msc_2 y_{\ytableaushort{12,3}}\msc_2 = \frac{q^{-2}}{q^{2}+1+q^{-2}}\msc_2,$$ we have that $T$ is surjective and $$\msc_2y_{\ytableaushort{12,3}}V^{\otimes 3} = \msc_2V^{\otimes 3}.$$ 
Note that $$y_{\ytableaushort{12,3}}\msc_2(u_1\otimes u_1\otimes u_{-1}) = -\theta_1^q - (q-q^{-1})\theta_2^q + \theta_3^q. $$ Thus, $ -\theta_1^q - (q-q^{-1})\theta_2^q + \theta_3^q\in y_{\ytableaushort{12,3}}\msc_2V^{\otimes 3}$. However,  $u_1\otimes u_1\otimes  u_2 \not\in \msc_2V^{\otimes 3}$, which implies that $\theta^q_5\in y_{\ytableaushort{12,3}}\msc_2V^{\otimes 3}$. So, by Proposition \ref{prop-max-3}, we have that $$ V^q(\epsilon_1)\cong \mfU_q\mfp_n\big(-\theta_1^q - (q-q^{-1})\theta_2^q + \theta_3^q\big) = y_{\ytableaushort{12,3}}\msc_2V^{\otimes 3} .$$ Since $\msc_2V^{\otimes 3} \cong V^q(\epsilon_1)$, we have an inclusion map $i: \msc_2V^{\otimes 3} \rightarrow y_{\ytableaushort{12,3}}V^{\otimes 3} $ such that $i\circ T = \text{id}$. Therefore, the following short exact sequence splits:
\begin{align*}
0 \longrightarrow \ker T \longrightarrow y_{\ytableaushort{12,3}}V^{\otimes 3} \xlongrightarrow{T} \msc_2V^{\otimes 3} \longrightarrow 0
\end{align*}
By Proposition \ref{prop-max-3}, since $\theta_5^q\in \ker T$, we have that $\ker T = \mfU_q\mfp_n\theta_5^q \cong V^q(2\epsilon_1 + \epsilon_2)$. This implies the exact sequence \eqref{eq-ne3-3}.
\end{proof}

\begin{proposition} \label{prop-y13/2} The module $y_{\ytableaushort{13,2}}V^{\otimes 3}$ is isomorphic to $y_{\ytableaushort{12,3}}V^{\otimes 3}$ as $\mfU_q\mfp_n$-modules.  More precisely, we have the following split short exact sequence
\begin{equation} \label{eq-ne3-4}
0 \longrightarrow V^q(2\epsilon_1 + \epsilon_2)\longrightarrow  y_{\ytableaushort{13,2}}V^{\otimes 3} \longrightarrow V^q(\epsilon_1) \longrightarrow 0.
\end{equation}

\end{proposition}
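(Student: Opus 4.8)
The plan is to mimic the proof of Proposition \ref{prop-y12/3}, using the contraction map in the first and second slots instead. Since the Young symmetrizer $y_{\ytableaushort{13,2}}$ is built from the same shape as $y_{\ytableaushort{12,3}}$ (they differ only by the choice of standard tableau), the structural features should be parallel. First I would record the relevant identity relating $\msc_1$ and $y_{\ytableaushort{13,2}}$: a direct computation analogous to $\msc_2 y_{\ytableaushort{12,3}}\msc_2 = \tfrac{q^{-2}}{q^{2}+1+q^{-2}}\msc_2$ should give $\msc_1 y_{\ytableaushort{13,2}}\msc_1 = \zeta\,\msc_1$ for some nonzero $\zeta\in\C(q)$ (one expects $\zeta = \tfrac{q^{-2}}{q^{2}+1+q^{-2}}$ up to a power of $q$, but the exact constant only needs to be nonzero). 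This shows that the $\mfU_q\mfp_n$-module homomorphism $T\colon y_{\ytableaushort{13,2}}V^{\otimes 3}\to \msc_1 V^{\otimes 3}$ given by $T(y_{\ytableaushort{13,2}}v)=\msc_1 y_{\ytableaushort{13,2}}v$ is surjective, and that $\msc_1 V^{\otimes 3}\cong V^q(\epsilon_1)$ (this last fact is already implicit, since $\msc_1 V^{\otimes 3}\cong V = \C_q(n|n)$, which as a highest weight module is $V^q(\epsilon_1)$).

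Next I would identify the kernel and the image in terms of the maximal vectors of Proposition \ref{prop-max-3}. Among the seven maximal vectors, the ones lying in $y_{\ytableaushort{13,2}}V^{\otimes 3}$ are $\theta_6^q$ and $-q\theta_2^q+\theta_3^q$. A short computation of $y_{\ytableaushort{13,2}}\msc_1(u_1\otimes u_{-1}\otimes u_1)$ (or a similarly chosen vector in the image of $\msc_1$) should produce a nonzero multiple of $-q\theta_2^q+\theta_3^q$, showing that this maximal vector generates $y_{\ytableaushort{13,2}}\msc_1 V^{\otimes 3}$, which is therefore isomorphic to $V^q(\epsilon_1)$. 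Since $\msc_1 V^{\otimes 3}\cong V^q(\epsilon_1)$ is irreducible (as $V = \C_q(n|n)$ is an irreducible highest weight module), the inclusion $\msc_1 V^{\otimes 3}\hookrightarrow y_{\ytableaushort{13,2}}V^{\otimes 3}$ composed with $T$ is a nonzero endomorphism of $V^q(\epsilon_1)$, hence (up to rescaling) the identity, so the short exact sequence
\begin{align*}
0 \longrightarrow \ker T \longrightarrow y_{\ytableaushort{13,2}}V^{\otimes 3} \xlongrightarrow{\ T\ } \msc_1 V^{\otimes 3} \longrightarrow 0
\end{align*}
splits. By Proposition \ref{prop-max-3}, $\theta_6^q\in\ker T$ (it is a maximal vector not proportional to $-q\theta_2^q+\theta_3^q$, and the latter maps nontrivially under $T$), so $\mfU_q\mfp_n\theta_6^q\subseteq\ker T$; counting weights via Proposition \ref{prop:charclasslimit} and the classical-limit computation of Theorem 3.8 in \cite{Mo} forces $\ker T = \mfU_q\mfp_n\theta_6^q\cong V^q(2\epsilon_1+\epsilon_2)$. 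This yields the split exact sequence \eqref{eq-ne3-4}, and the isomorphism $y_{\ytableaushort{13,2}}V^{\otimes 3}\cong y_{\ytableaushort{12,3}}V^{\otimes 3}$ follows by comparing with Proposition \ref{prop-y12/3}.

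The main obstacle I anticipate is the verification that $\mfU_q\mfp_n\theta_6^q$ exhausts $\ker T$ and is isomorphic to $V^q(2\epsilon_1+\epsilon_2)$ — that is, that $\ker T$ is irreducible with the stated highest weight. This is where one genuinely needs the classical-limit machinery: by Proposition \ref{prop:charclasslimit}, $\ch(y_{\ytableaushort{13,2}}V^{\otimes 3})$ equals the character of the corresponding classical module, and Theorem 3.8 of \cite{Mo} (with the sign correction of Remark \ref{rem-moon}) tells us that the classical module has exactly two maximal vectors, of weights $2\epsilon_1+\epsilon_2$ and $\epsilon_1$; hence $y_{\ytableaushort{13,2}}V^{\otimes 3}$ has exactly two maximal vectors, so any proper nonzero submodule of $\ker T$ would contribute a third maximal vector, a contradiction. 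The remaining computations — the coefficient identities for $\msc_1 y_{\ytableaushort{13,2}}\msc_1$ and the evaluation of $y_{\ytableaushort{13,2}}\msc_1$ on a test vector — are routine and can be carried out directly from the formulas for $y_{\ytableaushort{13,2}}$ and the action of $\mfc_1$, $\mft_i$ on $V^{\otimes 3}$.
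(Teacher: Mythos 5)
Your strategy is correct, but it is genuinely different from the paper's. The paper does not rerun the contraction analysis for $y_{\ytableaushort{13,2}}$: it transports the structure already established in Proposition \ref{prop-y12/3}, defining a homomorphism $S\colon y_{\ytableaushort{12,3}}V^{\otimes 3}\to y_{\ytableaushort{13,2}}V^{\otimes 3}$ sending the maximal vectors $\theta_5^q$ and $-\theta_1^q-(q-q^{-1})\theta_2^q+\theta_3^q$ to $\theta_6^q$ and $-q\theta_2^q+\theta_3^q$, respectively, and concluding from Propositions \ref{prop-max-3} and \ref{prop-y12/3} that $S$ is an isomorphism, so that the split sequence \eqref{eq-ne3-4} is inherited from \eqref{eq-ne3-3}. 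You instead redo the argument of Proposition \ref{prop-y12/3} with $\msc_1$ in place of $\msc_2$; this does work: since $\msc_1^2=0$, $\mst_1\msc_1=q\msc_1$ and $\msc_1\mst_1=-q^{-1}\msc_1$, one finds $\msc_1\,y_{\ytableaushort{13,2}}\,\msc_1=-\tfrac{q+q^{3}}{q^{-2}+1+q^{2}}\,\msc_1\mst_2\msc_1$, so the only extra input is that $\msc_1\mst_2\msc_1$ is a nonzero multiple of $\msc_1$ (the $q$-analogue of the classical relation $c_1s_2c_1=-c_1$), which gives your $\zeta\neq 0$. Your route buys a self-contained proof of splitness for this summand, independent of Proposition \ref{prop-y12/3}, at the cost of additional periplectic $q$-Brauer computations; the paper's route is shorter but its map $S$ is well defined only because Proposition \ref{prop-y12/3} already gives complete reducibility of the source with those two vectors as highest weight generators. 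Two small repairs to your write-up: the splitting is not via an inclusion of $\msc_1V^{\otimes 3}$ into $y_{\ytableaushort{13,2}}V^{\otimes 3}$ (it is not a subspace of it), but via the map $v\mapsto y_{\ytableaushort{13,2}}v$ on $\msc_1V^{\otimes 3}$, whose composite with $T$ is multiplication by $\zeta$ --- exactly the role played by $\msc_1 y_{\ytableaushort{13,2}}\msc_1=\zeta\msc_1$, mirroring the paper's use of $\msc_2y_{\ytableaushort{12,3}}\msc_2$; and $T(\theta_6^q)=0$ is immediate on weight grounds (there is no weight $2\epsilon_1+\epsilon_2$ in $\msc_1V^{\otimes 3}\cong \C_q(n|n)$), while the statement that $y_{\ytableaushort{13,2}}V^{\otimes 3}$ has exactly two maximal vectors up to scalar needs the classical structure of this particular summand (Moon's Section 6 as corrected in Remark \ref{rem-moon}), not merely Theorem 3.8 of \cite{Mo} for the whole of $V^{\otimes 3}$ --- the same granularity of classical-limit input the paper uses in Propositions \ref{prop-max-3} and \ref{prop-y12/3}.
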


\begin{proof} Let $v_1 = \theta_6^q$, $w_1 = \theta_5^q$, $v_2 =  -q\theta_2^q + \theta_3^q $, and $w_2 =  -\theta_1^q - (q-q^{-1})\theta_2^q + \theta_3^q$. 
Consider the $\mfU_q\mfp_n$-module homomorphism $$S: y_{\ytableaushort{12,3}}V^{\otimes 3}  \longrightarrow y_{\ytableaushort{13,2}}V^{\otimes 3} $$ such that $$S(w_1) = v_1\hspace{20pt} \text{ and } \hspace{20pt} S(w_2) = v_2.$$ By Propositions \ref{prop-max-3} and  \ref{prop-y12/3}, $S$ is an isomorphism. The short exact sequence \eqref{eq-ne3-4} then follows from Proposition \ref{prop-y12/3}. \end{proof}

The following theorem is analogous to Theorem \ref{uqpn-2tensorproductdecomposition}, but in the case of $V^{\otimes 3}$.
\begin{theorem} \label{uqpn-3tensorproductdecomposition}  As a $\mfU_q\mfp_n$-module, we have the following decomposition

\begin{align} \label{eq-v3-decomposition}
V^{\otimes 3} = y_{\ytableaushort{123}}V^{\otimes 3} \oplus  y_{\ytableaushort{12,3}}V^{\otimes 3} \oplus  y_{\ytableaushort{13,2}}V^{\otimes 3} \oplus  y_{\ytableaushort{1,2,3}}V^{\otimes 3},
\end{align}
where each submodule in the above decomposition are involved in either the non-split short exact sequences \eqref{eq-ne3-1} and \eqref{eq-ne3-2}, or in the split short exact sequences \eqref{eq-ne3-3} and \eqref{eq-ne3-4}.
\end{theorem}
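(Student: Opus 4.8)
The plan is to decompose $V^{\otimes 3}$ by repeatedly applying the idempotents $y_\lambda$ coming from the Hecke algebra $H_3 \subset \mfB_{q,3}$, in parallel with the argument used for $V^{\otimes 2}$ in Theorem \ref{uqpn-2tensorproductdecomposition}. The four idempotents $y_{\ytableaushort{123}}$, $y_{\ytableaushort{12,3}}$, $y_{\ytableaushort{13,2}}$, $y_{\ytableaushort{1,2,3}}$ are orthogonal ($y_\lambda y_\mu = 0$ for distinct tableaux, by the usual Hecke-algebra computations, e.g.\ using Lemma \ref{lem:q-symmetrizer}) and sum to $1$ in $H_3$, since $H_3$ is split semisimple over $\C(q)$ and these are its central/primitive idempotent contributions realized on $V^{\otimes 3}$. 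First I would record this fact and note that, because each $y_\lambda$ commutes with the $\mfU_q\mfp_n$-action, the $y_\lambda V^{\otimes 3}$ are $\mfU_q\mfp_n$-submodules and
$$
V^{\otimes 3} = y_{\ytableaushort{123}}V^{\otimes 3} \oplus  y_{\ytableaushort{12,3}}V^{\otimes 3} \oplus  y_{\ytableaushort{13,2}}V^{\otimes 3} \oplus  y_{\ytableaushort{1,2,3}}V^{\otimes 3}
$$
as $\mfU_q\mfp_n$-modules. The orthogonality/completeness can be checked either by a direct (tedious but routine) computation in $H_3$ or by invoking the classical limit: the analogous statement holds for $\mfU\mfp_n$ acting on $\C(n|n)^{\otimes 3}$ (this is in \cite{Mo}), and Proposition \ref{prop:charclasslimit} together with the fact that the $y_\lambda$ specialize at $q=1$ to the classical Young idempotents forces the $q$-generic decomposition to have the same shape.

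Second, I would identify each summand with the module already analyzed in Propositions \ref{prop-y123}, \ref{prop-y12/3}, \ref{prop-y13/2}, \ref{prop-y1/2/3}. Indeed those four propositions were stated precisely as structural results about $y_{\ytableaushort{123}}V^{\otimes 3}$, $y_{\ytableaushort{12,3}}V^{\otimes 3}$, $y_{\ytableaushort{13,2}}V^{\otimes 3}$, and $y_{\ytableaushort{1,2,3}}V^{\otimes 3}$, giving the non-split short exact sequences \eqref{eq-ne3-1}, \eqref{eq-ne3-2} and the split short exact sequences \eqref{eq-ne3-3}, \eqref{eq-ne3-4}. So once the direct-sum decomposition is in hand, the statement about which exact sequence governs which summand is immediate: simply cite the four propositions. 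In particular $y_{\ytableaushort{123}}V^{\otimes 3}$ and $y_{\ytableaushort{1,2,3}}V^{\otimes 3}$ are indecomposable (each has a unique maximal vector up to scalar, by Proposition \ref{prop-max-3}), while $y_{\ytableaushort{12,3}}V^{\otimes 3} \cong y_{\ytableaushort{13,2}}V^{\otimes 3}$ is completely reducible; hence \eqref{eq-v3-decomposition} is the decomposition into indecomposables.

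Third, a small consistency check I would perform before finishing: verify that the seven maximal vectors $\theta_1^q,\dots,\theta_7^q$ are distributed among the four summands exactly as Proposition \ref{prop-max-3} predicts (two in $y_{\ytableaushort{123}}V^{\otimes 3}$, coming from $\theta_4^q$ and $\theta_1^q+q\theta_2^q+q^2\theta_3^q$; two in $y_{\ytableaushort{12,3}}V^{\otimes 3}$; two in $y_{\ytableaushort{13,2}}V^{\otimes 3}$; one in $y_{\ytableaushort{1,2,3}}V^{\otimes 3}$), so that $2+2+2+1 = 7$ accounts for all maximal vectors and no summand is missed. This also re-confirms orthogonality on the level of socles. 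The main obstacle in a fully self-contained write-up is establishing $\sum_\lambda y_\lambda = 1$ and $y_\lambda y_\mu = 0$ directly in $\mfB_{q,3}$ without hand-waving; I would sidestep the messy $\mfB_{q,3}$ computation by restricting attention to $H_3$ (where these are the standard Hecke-algebra facts for $S_3$, valid since $q$ is generic) and then transporting them to $V^{\otimes 3}$ via the algebra map $H_3 \to \End_{\mfU_q\mfp_n}(V^{\otimes 3})$. With that in place the theorem is just the bookkeeping of the four preceding propositions.
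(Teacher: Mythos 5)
Your proposal is correct in substance but organized differently from the paper. You get the direct sum in one stroke by asserting that the four Gyoja idempotents are pairwise orthogonal and sum to $1$ in $H_3$, transporting this through $H_3\to\End_{\mfU_q\mfp_n}(V^{\otimes 3})$, and then quoting Propositions \ref{prop-y1/2/3}, \ref{prop-y123}, \ref{prop-y12/3}, \ref{prop-y13/2} for the structure of the summands. The paper never uses $\sum_\lambda y_\lambda=1$: it peels off one summand at a time, using only the one-sided vanishings $y_{\ytableaushort{123}}\,y_\mu=0$ and $y_{\ytableaushort{12,3}}\,y_\mu=0$, idempotency (so $T\circ\iota=\id$ splits each short exact sequence), and the inventory of maximal vectors in Proposition \ref{prop-max-3} to pin down the successive kernels, exactly as in Theorem \ref{uqpn-2tensorproductdecomposition}. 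Your route is cleaner once the idempotent identities are available and avoids the maximal-vector bookkeeping for the decomposition itself; the paper's route needs fewer products in $H_3$ but is terser in identifying $\ker T^\prime$.

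Two cautions about the step you defer, since it carries the whole argument. First, orthogonality and completeness of Young-type symmetrizers attached to \emph{all} standard tableaux is not a formal consequence of split semisimplicity (for larger symmetric groups the naive symmetrizers of distinct tableaux of the same shape need not be orthogonal), so for $H_3$ you genuinely must do the finite computation with the explicit elements $y_{\ytableaushort{123}},y_{\ytableaushort{12,3}},y_{\ytableaushort{13,2}},y_{\ytableaushort{1,2,3}}$ listed in the paper; it does check out (e.g.\ the coefficients of $1$ sum to $1$ because $1+2q^2+2q^4+q^6=(1+q^2)(1+q^2+q^4)$), and in fact a triangular system of one-sided vanishings together with $\sum_\lambda y_\lambda=1$ already gives directness. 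Second, the classical-limit fallback as you phrase it is not a proof: an element of $H_3$ over $\C(q)$ whose specialization at $q=1$ vanishes need not vanish (its coefficients may be divisible by $q-1$), and Proposition \ref{prop:charclasslimit} by itself only compares characters of modules already known to be of the right form. So take the direct $H_3$ computation, not the limit argument, as the actual justification of that step; with it, the rest of your write-up is sound.
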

\begin{proof} 
Let $T: V^{\otimes 3} \longrightarrow  V^{\otimes 3}$ be the $\mfU_q\mfp_n$-module homomorphism defined by $$T(v) = y_{\ytableaushort{123}}v.$$  Note that $$y_{\ytableaushort{123}}\msK =  0,$$ where $\msK = y_{\ytableaushort{12,3}} , y_{\ytableaushort{13,2}},  y_{\ytableaushort{1,2,3}} $. By Proposition \ref{prop-max-3}, this implies that $-\theta_1^q - (q-q^{-1})\theta_2^q + \theta_3^q , -q\theta_2^q + \theta_3^q, \theta_q^5, \theta_q^6, \theta_q^7\in \ker T$. We have the short exact sequence
\begin{align*}
0 \longrightarrow \ker T \longrightarrow V^{\otimes 3}\longrightarrow  y_{\ytableaushort{123}}V^{\otimes 3} \longrightarrow 0.
\end{align*}
Using the embedding $\iota: y_{\ytableaushort{123}} V^{\otimes 3} \longrightarrow  V^{\otimes 3}$ and that $T  \circ  \iota= \id$, we see that the sequence above splits. 

Let $T^\prime: \ker T \longrightarrow  \ker T$ be the $\mfU_q\mfp_n$-module homomorphism defined by $$T(v) = y_{\ytableaushort{12,3}}v.$$  Note that $$y_{\ytableaushort{12,3}}\msK =  0,$$ where $\msK = y_{\ytableaushort{13,2}},  y_{\ytableaushort{1,2,3}} $. By Proposition \ref{prop-max-3}, this implies that $ -q\theta_2^q + \theta_3^q, \theta_q^6, \theta_q^7\in \ker T^\prime$. Using the embedding $\iota^\prime: y_{\ytableaushort{12,3}} V^{\otimes 3} \longrightarrow  V^{\otimes 3}$ and that $T^\prime  \circ  \iota^\prime = \id$, we see that the short exact sequence \begin{align*}
0 \longrightarrow \ker T^\prime \longrightarrow \ker T \longrightarrow  y_{\ytableaushort{12, 3}}V^{\otimes 3} \longrightarrow 0.
\end{align*}
 splits. Using similar arguments, we have that $\ker T^\prime = y_{\ytableaushort{13,2}}V^{\otimes 3} \oplus y_{\ytableaushort{1,2,3}} V^{\otimes 3}$, and thus the decomposition \eqref{eq-v3-decomposition} follows.

The remaining part of the theorem follows from Propositions \ref{prop-y1/2/3}, \ref{prop-y123}, \ref{prop-y12/3}, and \ref{prop-y13/2}.
\end{proof}

\subsection{Reducibility of $V^{\otimes k}$}

\begin{theorem} \label{thm:vk-not-reducible}
For every $k\geq 2$, the $\mfU_q\mfp_n$-module $V^{\otimes k}$ is not completely reducible.
\end{theorem}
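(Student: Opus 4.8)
The plan is to reduce the general case to the cases $k=2$ and $k=3$ already treated, using the tensor structure of $\mfU_q\mfp_n$. First I would observe that if $V^{\otimes k}$ were completely reducible for some $k \ge 4$, then so would be every $\mfU_q\mfp_n$-submodule and every direct summand of it; in particular, since $V^{\otimes k} \cong V^{\otimes 2} \otimes V^{\otimes (k-2)}$, any submodule of $V^{\otimes 2} \otimes V^{\otimes(k-2)}$ of the form $M \otimes V^{\otimes(k-2)}$ with $M \subseteq V^{\otimes 2}$ a $\mfU_q\mfp_n$-submodule must again be completely reducible. The key point will be to show that such a tensor factorization retains the non-semisimplicity: the non-split extension \eqref{eq-seq1} (or \eqref{eq-seq2}) from Theorem \ref{uqpn-2tensorproductdecomposition}, when tensored on the right with $V^{\otimes(k-2)}$, should remain non-split, which forces $V^{\otimes k}$ to have a non-semisimple summand.

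The main steps, in order, would be: (1) Recall from Theorem \ref{uqpn-2tensorproductdecomposition} that $V^{\otimes 2} = y_{\ytableaushort{12}}V^{\otimes 2} \oplus y_{\ytableaushort{1,2}}V^{\otimes 2}$ with both summands reducible indecomposable, and that $y_{\ytableaushort{1,2}}V^{\otimes 2}$ sits in the non-split sequence $0 \to V^q(\epsilon_1+\epsilon_2) \to y_{\ytableaushort{1,2}}V^{\otimes 2} \to V^q(0) \to 0$. (2) Tensoring \eqref{eq-seq1} on the right with $V^{\otimes(k-2)}$ (an exact functor, since we are over the field $\C(q)$ and tensoring is exact) yields a short exact sequence $0 \to V^q(\epsilon_1+\epsilon_2)\otimes V^{\otimes(k-2)} \to y_{\ytableaushort{1,2}}V^{\otimes 2}\otimes V^{\otimes(k-2)} \to V^q(0)\otimes V^{\otimes(k-2)} \to 0$, which is a subquotient situation inside $V^{\otimes k}$ because $y_{\ytableaushort{1,2}}V^{\otimes 2}\otimes V^{\otimes(k-2)}$ is a direct summand of $V^{\otimes k}$ (apply $y_{\ytableaushort{1,2}}$ in the first two factors). (3) Argue that this sequence does not split: the classical-limit functor of Section \ref{sec:classical-limit} (or a direct weight/maximal-vector count via Theorem \ref{thm-maximalvectors}) shows that the middle term contains, in an appropriate weight space, a single Jordan block of the relevant operator that cannot decompose — equivalently, one checks that $\Hom_{\mfU_q\mfp_n}(V^q(0)\otimes V^{\otimes(k-2)}, y_{\ytableaushort{1,2}}V^{\otimes 2}\otimes V^{\otimes(k-2)})$ does not contain a section, by pulling back the non-split class of \eqref{eq-seq1}. (4) Conclude that $V^{\otimes k}$ has an indecomposable direct summand of length $\ge 2$, hence is not completely reducible.

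The cleanest route for step (3) is probably via the classical limit: if the extension split after tensoring with $V^{\otimes(k-2)}$, then applying the classical-limit functor (Theorem \ref{thm:classicallimit} and Proposition \ref{prop:charclasslimit}, which behave well on tensor products of highest weight modules) would produce a split sequence of $\mfU\mfp_n$-modules $0 \to V(\epsilon_1+\epsilon_2)\otimes \C(n|n)^{\otimes(k-2)} \to \dots \to V(0)\otimes\C(n|n)^{\otimes(k-2)}\to 0$; but this sequence, obtained from Moon's non-split sequence for $\C(n|n)^{\otimes 2}$ tensored with $\C(n|n)^{\otimes(k-2)}$, is known to be non-split by \cite{Mo} (this is exactly the classical statement that $\C(n|n)^{\otimes k}$ is not completely reducible, which one can localize to the relevant summand). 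Alternatively, and more self-containedly, one can exhibit inside $V^{\otimes k}$ a single vector $v$ of weight $\epsilon_1+\epsilon_2$ (lifting $\theta_2^q$ tensored with a highest weight vector of $V^{\otimes(k-2)}$) together with a vector of weight $0$ in the same generated submodule, with $\mfU_q^+$ acting non-semisimply — i.e. directly produce a length-two indecomposable. \textbf{The hard part} will be verifying non-splitness after tensoring: tensoring a non-split extension with a non-simple module can in principle split, so one genuinely needs either the classical-limit comparison with \cite{Mo} or an explicit identification of an indecomposable summand of $V^{\otimes k}$ via the maximal-vector basis of Theorem \ref{thm-maximalvectors}; making that identification precise — keeping track of which Young-symmetrizer idempotents cut out which indecomposables in the $k$-fold tensor power — is the real content.
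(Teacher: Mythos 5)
Your reduction to the $k=2$ case has a genuine gap at exactly the point you flag as ``the hard part,'' and the proposal as written does not close it. Non-splitness of \eqref{eq-seq1} does not automatically survive tensoring with $V^{\otimes(k-2)}$, and neither of your two suggested remedies works as stated. The classical-limit route fails in the direction you need: assuming the $q$-deformed sequence splits, a section is a priori only defined over $\C(q)$ and may have poles at $q=1$; after clearing poles by a power of $(q-1)$ its reduction is no longer a section (the composite with the projection reduces to $0$), so you cannot conclude that the classical sequence splits and thereby contradict \cite{Mo}. (Moreover, Remark \ref{rem-moon} warns that the relevant Sections 6.1--6.2 of \cite{Mo} need correction, so the classical non-splitness for the localized sequence at general $k$ is not something you can simply quote.) The alternative route via Theorem \ref{thm-maximalvectors} is also not available off the shelf: that theorem only produces a linearly independent \emph{set} of maximal vectors, and completeness (which is what the indecomposability arguments for $k=2,3$ actually use, via Propositions \ref{prop-max} and \ref{prop-max-3}) is not established for general $k$. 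So step (3) remains open, and with it the whole argument.

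The paper avoids this issue entirely by going in the other direction: instead of tensoring an extension up, it contracts down. The image $\msc_{r,s}V^{\otimes k}$ is a \emph{submodule} of $V^{\otimes k}$ isomorphic to $V^{\otimes(k-2)}$, and iterating the contractions produces a submodule of $V^{\otimes k}$ isomorphic to $V^{\otimes 2}$ or $V^{\otimes 3}$ (depending on the parity of $k$); since submodules of completely reducible modules are completely reducible, Theorems \ref{uqpn-2tensorproductdecomposition} and \ref{uqpn-3tensorproductdecomposition} give the contradiction immediately, with no Ext or splitting analysis needed. If you want to stay close to your factorization $V^{\otimes k}\cong V^{\otimes 2}\otimes V^{\otimes(k-2)}$, the fix is essentially the same idea: tensor $y_{\ytableaushort{1,2}}V^{\otimes 2}$ not with all of $V^{\otimes(k-2)}$ but with a \emph{trivial} submodule of $V^{\otimes(k-2)}$ obtained by (near-)full contraction, which embeds a copy of $V^{\otimes 2}$ (or $V^{\otimes 3}$) into $V^{\otimes k}$ and reduces you to the already proved cases.
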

\begin{proof} Fix $k\geq 2$, and assume for the sake of contradiction that  $V^{\otimes k}$  is completely reducible. For any $r,s \in \{1,2\hdots, k-1\}$ such that $r\neq s$, $$\msc_{r,s}V^{\otimes k}\cong V^{\otimes k-2}.$$  Consecutive applications of $\msc_{r,s}$ to  $V^{\otimes k}$  for appropriate $r$ and $s$ will lead to a submodule $M$ of $V^{\otimes k}$  that is isomorphic either to $V^{\otimes 2}$ or to $V^{\otimes 3}$. By Theorems \ref{uqpn-2tensorproductdecomposition} and \ref{uqpn-3tensorproductdecomposition},  $V^{\otimes 2}$ and $V^{\otimes 3}$ are not completely reducible. This  leads to a contradiction as submodules of completely reducible modules are also completely reducible.
\end{proof}

\noindent
Department of Mathematics, University of Texas at Arlington 
\\ Arlington, TX 76021, USA\\
saber.ahmed\@@mavs.uta.edu

\noindent
Department of Mathematics, University of Texas at Arlington 
\\ Arlington, TX 76021, USA\\
grandim\@@uta.edu

\noindent
University of Alberta, Department of Mathematical and Statistical Sciences, CAB 632\\
Edmonton, AB T6G 2G1, Canada\\
nguay\@@ualberta.ca

\end{document}